\title{On the pre- and post-positional semi-random graph processes}
\author{Pu Gao \\ University of Waterloo \\ pu.gao@uwaterloo.ca \and Hidde Koerts \\ University of Waterloo \\ hkoerts@uwaterloo.ca}
\date{\today}
\newcommand{\toup}{\mathrel{\nonscript\mkern-1.2mu\mkern1.2mu{\uparrow}}}
\newtheorem{theorem}{Theorem}[section]
\newtheorem{corollary}[theorem]{Corollary}
\newtheorem{conjecture}[theorem]{Conjecture}
\newtheorem{lemma}[theorem]{Lemma}
\newtheorem{proposition}[theorem]{Proposition}
\newtheorem{remark}[theorem]{Remark}
\theoremstyle{definition}
\newtheorem{definition}{Definition}[section]
\newtheorem{claim}{Claim}[section]
\newcommand{\colormathbox}[3][\mathord]{%
	#1{%
		\setlength{\fboxsep}{1pt}%
		\mathpalette\color@mathbox{{#2!20}{#3}}%
	}%
}
\newcommand{\color@mathbox}[2]{%
	\color@@mathbox#1#2%
}
\newcommand{\color@@mathbox}[3]{%
	\colorbox{#2}{$#1\m@th#3$}%
}
\lstdefinelanguage{Maple}%
{morekeywords={assuming,break,by,catch,description,do,done,%
elif,else,end,error,export,fi,finally,for,from,global,if,%
implies,in,intersect,local,minus,mod,module,next,not,od,%
option,options,or,proc,quit,read,return,save,stop,subset,then,%
to,try,union,use,uses,while,xor},%
sensitive=true,%
morecomment=[s]{(*}{*)},%
morestring=[b]",%
morestring=[d]"%
}[keywords,comments,strings]%
\tiny\color{gray},   
\newcommand\ex[1]{{\mathbb E}[#1]}
\def\eps{{\epsilon}}
\newcommand\remove[1]{{}}
\begin{document}

\maketitle

\begin{abstract}
    
\remove{The semi-random graph process is a single player graph game where the player is initially presented an edgeless graph with $n$ vertices. In each round, the player is offered a vertex $u$ uniformly at random and subsequently chooses a second vertex $v$ deterministically according to some strategy, and adds edge $uv$ to the graph. The objective for the player is then to ensure that the graph fulfils some specified property as fast as possible. 

We consider a variant that was recently suggested by Wormald where the player chooses the first vertex and the second vertex is chosen u.a.r. We investigate the relation between this variant and the original process. 
}

We study the semi-random graph process, and a variant process recently suggested by Nick Wormald.  
We show that these two processes are asymptotically equally fast in constructing a semi-random graph $G$ that has property ${\mathcal P}$, for the following examples of ${\mathcal P}$:
\begin{itemize}
    \item ${\mathcal P}$ is the set of graphs containing a $d$-degenerate subgraph, where $d\ge 1$ is fixed;
    \item ${\mathcal P}$ is the set of $k$-connected graphs, where $k\ge 1$ is fixed.
\end{itemize}
In particular, our result of the $k$-connectedness above settles the open case $k=2$ of the original semi-random graph process.

We also prove that there exist properties ${\mathcal P}$ where the two semi-random graph processes do not construct a graph in ${\mathcal P}$ asymptotically equally fast. We further propose some conjectures on ${\mathcal P}$ for which the two processes perform differently.



\end{abstract}

\newpage
\section{Introduction}
\label{section:introduction}
The semi-random graph process is a single player game initially suggested by Peleg Michaeli, and formally introduced by Ben-Eliezer, Hefetz, Kronenberg, Parczyk, Shikhelman and Stojakovi{\'{c}}~\cite{Ben-Eliezer2020Semi-randomProcess}. In this game, a graph is iteratively constructed from an empty graph on $n$ vertices, denoted by $[n]=\{1,2,\ldots, n\}$. Every round, one edge is added to the graph. The first end-vertex of the edge, $u$, is chosen uniformly at random (u.a.r.) from all the vertices in $[n]$. Given the choice of $u$, the other end-vertex $v$ is chosen strategically by the player (either deterministically, or by some random strategy). 

The semi-random graph process is part of a larger category of random processes where a player has limited power of choice among a set of random options. This category of combinatorial random processes traces its origins to the work of Azar, Broder, Karlin and Upfal~\cite{Azar1994BalancedAllocations} on placing $n$ balls into $n$ bins. They showed that if the player can choose from two u.a.r.\ selected bins rather than just one, there exists a strategy to decrease the expected number of balls in the fullest bin by an exponential factor. Similar load-balancing schemes have been investigated by Mitzenmacher~\cite{Mitzenmacher2001TheBalancing}. Another well-known example of such random processes is the so-called Achlioptas graph process, suggested by Dimitris Achlioptas during a Fields Institute workshop. Instead of adding a single edge picked u.a.r.\ every round as in the classical Erd\H{o}s-R\'enyi random graph process~\cite{erdosrenyi}, he suggested that every round the player is offered $k\ge 2$ such edges, and one of the $k$ edges can be chosen and added to the graph.  The Achlioptas graph process was first investigated by Bohman and Frieze~\cite{Bohman2001AvoidingComponent}, who showed that allowing the player to choose from $k\ge 2$ edges enables the player to delay the appearance of a giant component.

In the seminal paper on the semi-random graph process, Ben-Eliezer, Hefetz, Kronenberg, Parczyk, Shikhelman and Stojakovi{\'{c}}~\cite{Ben-Eliezer2020Semi-randomProcess} provided asymptotic upper and lower bounds on the number of rounds needed to achieve certain objectives a.a.s.\ (asymptotically almost surely, see Section~\ref{section:notation} for a precise definition), including having minimum degree at least $k\ge 1$, having clique number $k\ge 1$, and being $k$-connected. Additionally, they demonstrated how the semi-random graph process can be used to model other random graph models. Specifically, they established how to couple the semi-random process to the Erd\H{o}s-R\'enyi random graph model, the $k$-out model, and the min-degree process.

Further research by Behague, Marbach, Pra{\l}at and Rucinski~\cite{Behague2021SubgraphHypergraphs} gave tight asymptotic bounds for the minimum number of rounds needed to construct a graph that contains a subgraph isomorphic to a fixed graph $H$ based on the degeneracy of $H$. Moreover, they generalised the semi-random graph process to hypergraphs, and similarly showed tight bounds for constructing a fixed $s$-uniform hypergraph.

In terms of spanning subgraphs, Ben-Eliezer, Gishboliner, Hefetz and Krivelevich~\cite{Ben-Eliezer2020VeryProcess} showed that one can construct any fixed bounded-degree spanning subgraph a.a.s.\ in linear time. Moreover, MacRury, Pra{\l}at and the first author~\cite{Gao2022HamiltonProcess,Gao2021PerfectProcess,gao2022fully} obtained bounds on the minimum number of rounds needed to construct a graph with a perfect matching or a Hamilton cycle. The upper bound on the minimum number of rounds required for the construction of a Hamiltonian graph was further improved by Frieze and Sorkin~\cite{frieze2022hamilton}.

Recently, Gamarnik, Kang and Pra{\l}at~\cite{gamarnik2023cliques} have found bounds for the number of rounds needed to force the appearance of cliques and independent sets, and to ensure the graph has at least a given chromatic number.

Pra{\l}at and Singh~\cite{prałat2023power} have recently also considered the properties of minimum degree, the existence of a perfect matching and the existence of a Hamilton cycle in a generalisation of the semi-random graph process, where each round the player is presented with $k$ random vertices.

\subsection{Two semi-random processes}

Recently, Nick Wormald proposed (via personal contact) an alternative version of the semi-random graph process. Instead of the first vertex being chosen u.a.r.\ in each round and the second vertex being chosen according to some strategy, he proposed switching this order. That is, the first vertex in each round is chosen strategically by the player, whereas the second vertex is chosen u.a.r. We refer to this new model as the \emph{pre-positional semi-random graph process}, and the original model by Ben-Eliezer, Hefetz, Kronenberg, Parczyk, Shikhelman and Stojakovi{\'{c}}~\cite{Ben-Eliezer2020Semi-randomProcess} as the \emph{post-positional semi-random graph process}. By a simple coupling argument, it is easy to see that the post-positional process can construct a  graph in ${\mathcal P}$ at least as fast as the pre-positional process, for any graph property ${\mathcal P}$ (See Lemma~\ref{lem:relation_pre_post_positional} in Section~\ref{section:positional_model}).
The interesting question arising from comparing these two processes is, whether the post-positional process significantly outperforms the pre-positional process in constructing a member of ${\mathcal P}$. Perhaps a little surprisingly, for quite many properties ${\mathcal P}$, these two processes perform equally well. However, we also give an example of ${\mathcal P}$ where the post-positional process construct a graph in ${\mathcal P}$ significantly faster. 

\subsection{Main results}

Our first main result concerns the minimum number of rounds required to construct a $k$-connected graph. 


\begin{theorem} \label{thm:k-connect}
   Let $k\ge 1$ be fixed. For every $\eps>0$, a.a.s.\ there exists a real number $\alpha_k$ such that the following hold:
   \begin{enumerate}
       \item[(a)] no strategy in a post-positional or pre-positional semi-random graph process can construct a $k$-connected graph in at most $(\alpha_k-\epsilon)n$ rounds;
       \item[(b)] there exist strategies in post-positional and pre-positional semi-random graph processes that construct a $k$-connected graph in at most $(\alpha_k+\epsilon)n$ rounds.
   \end{enumerate}
\end{theorem}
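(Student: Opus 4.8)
The plan is to reduce the theorem, via Lemma~\ref{lem:relation_pre_post_positional}, to two one-sided estimates. Write $T^{\mathrm{post}}_k$ and $T^{\mathrm{pre}}_k$ for the (random) minimum number of rounds in which an optimal post- resp.\ pre-positional strategy builds a $k$-connected graph, and let $\alpha_k$ be the constant such that $T^{\mathrm{post}}_k=(\alpha_k+o(1))n$ a.a.s.; for $k\neq 2$ this constant and its concentration are known from~\cite{Ben-Eliezer2020Semi-randomProcess}, while for $k=2$ its existence is part of what we prove. Since by Lemma~\ref{lem:relation_pre_post_positional} one always has $T^{\mathrm{post}}_k\le T^{\mathrm{pre}}_k$ (the post-positional process is never slower), it suffices to establish: (a) a.a.s.\ no post-positional strategy succeeds within $(\alpha_k-\epsilon)n$ rounds — which forces the same lower bound on $T^{\mathrm{pre}}_k$ — and (b) some pre-positional strategy a.a.s.\ succeeds within $(\alpha_k+\epsilon)n$ rounds — which forces the same upper bound on $T^{\mathrm{post}}_k$.

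For (a), I would use the differential equation method in its ``lower bound on a hitting time'' form. Track the numbers $Y_0,\dots,Y_{k-1}$ of vertices of each degree below $k$, the number of connected components, and, for $k\ge 2$, auxiliary quantities measuring how many small vertex sets still fail to have boundary at least $k$. Because in every round one endpoint of the added edge is uniform on $[n]$, the conditional one-step drift of these statistics is bounded independently of the strategy, so their scaled trajectories stay close to the solution of an autonomous ODE system, and this solution does not reach the region corresponding to ``$k$-connected'' before time $\alpha_k n$. For $k=1$ this reduces to the observation that a connected graph needs $n-1$ edges; the case $k=2$ requires the new estimate and is what pins down $\alpha_2$.

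Part (b), the construction, is the heart of the argument. I would design a pre-positional strategy with a bulk phase of $(\alpha_k-\delta)n$ rounds followed by a clean-up phase of $(\delta+\epsilon)n$ rounds. In the bulk phase the player always selects as its first vertex a currently ``deficient'' vertex — one of degree below $k$, or one lying in a component of size $o(n)$, or one incident to a small separating set — so that the uniformly random second vertex hands it a fresh, essentially uniform neighbour; a differential-equation analysis should show that this drives $(Y_0,\dots,Y_{k-1},\dots)$ along the same fluid trajectory as the optimal post-positional strategy, so that after the bulk phase, a.a.s., all but $o(n)$ vertices have degree at least $k$, all but $o(n)$ vertices lie in a single giant component, and — thanks to the $\Omega(n)$ accumulated uniformly random endpoints — every vertex set of size between $k$ and $n/2$ already has boundary larger than $k$. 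In the clean-up phase the player repairs the $o(n)$ remaining local defects: raising the few low-degree vertices, absorbing the few vertices outside the giant, and destroying the $O(1)$ remaining small separators; a sprinkling and union-bound argument then shows that $o(n)$ additional rounds suffice to reach full $k$-connectivity a.a.s.

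The main obstacle is the bulk phase of (b): one must verify that a player who controls only the first endpoint can still follow the optimal post-positional fluid trajectory, and — the genuinely new point — that the accumulated random endpoints guarantee enough expansion that no ``many small components'' or ``many small separators'' obstruction survives, so that $o(n)$ clean-up rounds really are enough. Carrying this out, together with the explicit determination of $\alpha_2$, is where the work concentrates; every transfer between the two models is then immediate from Lemma~\ref{lem:relation_pre_post_positional}.
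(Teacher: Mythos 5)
Your reduction via Lemma~\ref{lem:relation_pre_post_positional} (prove the lower bound for post-positional, the upper bound for pre-positional) is exactly the paper's skeleton, and your part (a) is essentially right in spirit: the lower bound is just the degree obstruction, since no strategy can reach minimum degree $k$ faster than the min-degree process, which takes $\alpha_k n$ rounds; tracking components and separators is unnecessary. Also note that for $k\ge 3$ your ``bulk plus clean-up'' machinery is not needed at all: the $k$-min process graph is already a.a.s.\ $k$-connected (Kang--Koh--Ree--\L uczak), and the paper simply observes that the $k$-min strategy is implementable pre-positionally with failure probability $1/n$ per round; the case $k=1$ is a separate easy component-merging argument.

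The genuine gap is in your part (b) for $k=2$, precisely at the point you flag as ``the genuinely new point'' but do not prove. You assert that after a bulk phase of $\approx\alpha_2 n$ rounds the accumulated uniform endpoints force expansion, so that every vertex set of size between $k$ and $n/2$ has boundary larger than $k$ and only $O(1)$ small separators survive, whence a sprinkling/union-bound clean-up of $o(n)$ rounds suffices. This is false as stated: the graph produced by the $2$-min process (or any strategy reaching minimum degree $2$ in $\approx\alpha_2 n$ rounds) a.a.s.\ still contains many cut vertices --- there can be $\Theta(n)$ blocks of size $2$ and up to $o(n)$ larger blocks, not $O(1)$ separators --- so $2$-connectivity is far from a union-bound away. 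The paper has to do real work here: first a subgraph-counting argument over the subphases of the $2$-min process (Proposition~\ref{prop:2-connectedness_small_blocks} and Corollary~\ref{cor:2-connectedness_number_of_components}) showing the graph a.a.s.\ has only $o(n)$ maximal $2$-connected blocks, and then a clean-up strategy built on a reduced block tree with a balancing lemma (Proposition~\ref{prop:reduced_block_tree_balanced}), designed so that each repair edge aims its player-chosen endpoint at a leaf block while the uniform endpoint must land in a \emph{linear-size} target set (a giant block $B^*$ or the opposite colour class), giving constant success probability per defect and hence $o(n)$ clean-up rounds, plus a final argument that these added edges genuinely kill all cut vertices (two vertex-disjoint paths to $B^*$, then absorbing the residual block--cut-vertex pairs). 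Without an argument of this kind --- both the bound on the number of blocks and a strategy whose per-repair success probability is bounded below by a constant --- your clean-up phase has no justification that it terminates in $o(n)$ rounds, so the proposal does not yet yield Theorem~\ref{thm:k-connect}(b) for $k=2$.
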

\begin{remark}
\begin{enumerate}
    \item[(a)] Theorem~\ref{thm:k-connect} was proved to be true for the post-positional semi-random graph process for $k\ge 3$ by Ben-Eliezer, Hefetz, Kronenberg, Parczyk, Shikhelman, and Stojakovi\'c~\cite{Ben-Eliezer2020Semi-randomProcess}. We settled the open case $k=2$.
    \item[(b)] The theorem confirms that the two variants of the semi-random graph process perform asymptotically equally well on constructing $k$-connected graphs.
    \item[(c)] The constant $\alpha_1$ is set to be 1. The real numbers $\alpha_k$ for $k \geq 2$ are derived from a system of differential equations and follow from applying Wormald's differential equation method (See~\cite{Wormald1999TheAlgorithms}). The first few values are
    \begin{align*}
        \alpha_2 &= \ln 2 + \ln (1+ \ln 2),\\
        \alpha_3 &= \ln (( \ln 2)^2 + 2(1+ \ln 2) (1+ \ln(1 + \ln 2))),
    \end{align*}
    as calculated in~\cite{Kang2006TheProcess}. 
\end{enumerate}

\end{remark}

Next, we show that the two processes perform equally well in constructing graphs with a given small subgraph.

A graph $H$ is said to be $d$-degenerate if each subgraph of $H$ contains a vertex of degree at most $d$. In their seminal paper, Ben-Eliezer, Hefetz, Kronenberg, Parczyk, Shikhelman, and Stojakovi\'c~\cite{Ben-Eliezer2020Semi-randomProcess} considered the number of rounds needed to construct a fixed size $d$-degenerate graph as a subgraph. They showed the following upper bound in the post-positional process.

\begin{theorem}[{\cite[Theorem~1.10]{Ben-Eliezer2020VeryProcess}}] \label{thm:degenerate-old}
    Let $H$ be a fixed $d$-degenerate graph, and let $f: \mathbb{N} \to \mathbb{R}$ be a function such that $\lim_{n \to \infty} f(n) = \infty$. Then there exists a strategy in the post-positional process such that the resulting graph $G$ contains a subgraph isomorphic to $H$ in a.a.s.\ $f(n) \cdot n^{(d-1)/d}$ rounds.
\end{theorem}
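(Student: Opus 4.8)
The plan is to build a copy of $H$ greedily along a degeneracy ordering. Write $h=|V(H)|$ and fix an ordering $v_1,\dots,v_h$ of $V(H)$ in which each $v_i$ has at most $d$ neighbours among $v_1,\dots,v_{i-1}$; call this set of back-neighbours $B_i$ and set $b_i:=|B_i|\le d$. We maintain an injective partial embedding $v_j\mapsto w_j$, and at step $i$ we must locate a vertex $w_i\notin\{w_1,\dots,w_{i-1}\}$ adjacent, in the current semi-random graph, to $w_j$ for every $v_j\in B_i$. Once all $h$ steps succeed, $\{w_1,\dots,w_h\}$ carries a copy of $H$ (extra edges among the $w_j$ are harmless, since we only need a not-necessarily-induced subgraph).

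The core is a subroutine that, given $b\le d$ already-placed vertices $z_1,\dots,z_b$ and a constant-size forbidden set $F$, outputs a.a.s.\ within $O\!\left(g(n)\,n^{(b-1)/b}\right)$ rounds a vertex $w\notin F$ joined to all of $z_1,\dots,z_b$, where $g(n)\to\infty$ is a slowly growing function (say $g(n)\le\log n$) chosen at the end. It runs in $b$ nested phases, building a descending chain of common neighbourhoods $N_1\supseteq N_2\supseteq\cdots$, and it uses the freedom to ignore an occasional round by joining the offered vertex to an irrelevant vertex. Phase $1$: spend $M_1:=\lceil g(n)\,n^{(b-1)/b}\rceil$ rounds, each time joining the offered random vertex (outside $F$) to $z_1$; since $M_1=o(n)$ this yields a set $N_1$ of $\Theta(M_1)$ fresh neighbours of $z_1$. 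Phase $j$ for $2\le j\le b-1$: repeatedly take the offered vertex and, whenever it lies in $N_{j-1}$ — an event of probability $\Theta(M_{j-1}/n)$, sustained throughout because only $M_j=o(M_{j-1})$ vertices and the $O(1)$ forbidden ones are ever removed from play — join it to $z_j$, stopping once the common neighbourhood $N_j\subseteq N_{j-1}$ built so far reaches size $M_j:=\lceil g(n)\,n^{(b-j)/b}\rceil$; a Chernoff bound shows this takes $O\!\left(M_j\,n/M_{j-1}\right)=O\!\left(n^{(b-1)/b}\right)$ rounds a.a.s. Phase $b$: wait for an offered vertex in $N_{b-1}\setminus F$, an event of probability $\Theta\!\left(g(n)\,n^{1/b-1}\right)$ per round, hence occurring within $O\!\left(n^{(b-1)/b}/g(n)\right)$ rounds a.a.s.; join it to $z_b$ and return it. As $b\le d=O(1)$, the phases together cost $O\!\left(g(n)\,n^{(b-1)/b}\right)=O\!\left(g(n)\,n^{(d-1)/d}\right)$ rounds.

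The strategy for $H$ then reads: place $w_1$ as the first offered vertex; for $i\ge 2$, if $b_i=0$ take the next offered vertex avoiding $\{w_1,\dots,w_{i-1}\}$ (one round suffices a.a.s.), if $b_i=1$ take such a vertex and join it to the image of the unique back-neighbour, and if $b_i\ge 2$ run the subroutine on the images of $B_i$ with $F=\{w_1,\dots,w_{i-1}\}$. Each of the fewer than $h$ steps costs $O\!\left(g(n)\,n^{(d-1)/d}\right)$ rounds a.a.s., so the total is $O\!\left(h\,g(n)\,n^{(d-1)/d}\right)$ rounds; only $O(h\cdot d)=O(1)$ concentration events are involved, so a union bound keeps the overall failure probability $o(1)$. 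Finally, given the prescribed $f$ with $f(n)\to\infty$, choose $g(n):=\min\{\,f(n)/(Ch),\ \log n\,\}$, with $C$ the implied constant above; then $g(n)\to\infty$, $g(n)\le\log n$ (so $M_1=o(n)$ throughout), and $C\,h\,g(n)\le f(n)$, so the construction finishes within $f(n)\,n^{(d-1)/d}$ rounds a.a.s.

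The one part that needs care is the bookkeeping of the nested phases: one must verify that each $N_{j-1}$ retains density $\Theta(M_{j-1}/n)$ for the whole of phase $j$ — which holds since only $o(M_{j-1})$ of its vertices are ever consumed — and that the adaptive stopping times are controlled, both of which are routine, e.g.\ by dominating the number of rounds in phase $j$ by a sum of $M_j$ independent geometric variables of success probability $\Omega(M_{j-1}/n)$ and applying a Chernoff-type tail bound. The conceptual content, already visible for a triangle ($d=2$: build $\Theta(g(n)\sqrt n)$ neighbours of a fixed $z_1$, then wait $\Theta(\sqrt n/g(n))$ rounds for a random offer inside that neighbourhood to be joined to $z_2$), is the geometrically decaying exponents $\tfrac{b-1}{b}>\tfrac{b-2}{b}>\cdots>\tfrac1b>0$ across the phases, which balances every phase cost at $n^{(b-1)/b}\le n^{(d-1)/d}$.
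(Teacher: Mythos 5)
Your proof is correct, but the core mechanism differs from the one the paper uses. The paper (which cites this statement and proves its own pre-/post-positional version, Theorem~\ref{thm:degeneracy}(b), in Section~\ref{section:degeneracy}) follows the same inductive skeleton as you — strip a vertex of degree at most $d$, embed the rest, then attach the last vertex — but its attachment step is non-adaptive: it grows $\le d$ stars, each of size $\Theta(g(n)\,n^{(d-1)/d})$, centred at the images of $N_H(v)$, and then argues by a first-moment/concentration computation that the intersection of the $\le d$ random leaf-sets is a.a.s.\ nonempty (each leaf-set has density about $g(n)n^{-1/d}$, so a fixed vertex lies in all of them with probability about $g(n)^d/n$). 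You instead exploit the post-positional player's ability to see $u_t$ before choosing $v_t$: you build nested common neighbourhoods of geometrically shrinking sizes $n^{(b-1)/b}, n^{(b-2)/b},\dots,n^{1/b}$, joining an offered vertex to the next target only when it already lies in the current common neighbourhood, so the cost of every phase balances at $n^{(b-1)/b}$ and no intersection estimate is needed — only waiting-time/Chernoff bounds. The trade-off: the paper's star-intersection subroutine works verbatim in the weaker pre-positional process (which is exactly what the paper needs), whereas your subroutine as written is genuinely post-positional (though it could be made pre-positional by fixing $v_t=z_j$ throughout phase $j$ and wasting the failure rounds, at the same cost); in exchange your argument is more elementary probabilistically. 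One small imprecision to tidy: in phase $b$ the waiting time exceeds $C\,n^{(b-1)/b}/g(n)$ with constant (not $o(1)$) probability for fixed $C$, so you should allot, say, $n^{(b-1)/b}/\sqrt{g(n)}$ rounds there to get failure probability $e^{-\Omega(\sqrt{g(n)})}=o(1)$; this still fits comfortably inside your $f(n)\,n^{(d-1)/d}$ budget, so the theorem is unaffected.
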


They conjectured that this bound is tight if $d \geq 2$, which was subsequently shown by Behague, Marbach, Pra{\l}at, and Ruci\'{n}ski~\cite{Behague2021SubgraphHypergraphs}. We show that the same bounds hold for the pre-positional process. 

\begin{theorem}
\label{thm:degeneracy}
    Let $H$ be a fixed $d$-degenerate graph, and let $f: \mathbb{N} \to \mathbb{R}$ be a function such that $\lim_{n \to \infty} f(n) = \infty$. Then a.a.s.\ the following hold:
  \begin{enumerate}
       \item[(a)] If $d\geq 2$, no strategy in a post-positional or pre-positional semi-random graph process can construct a graph containing a copy of $H$ in at most $ n^{(d-1)/d}/f(n)$ rounds;
       \item[(b)] there exist strategies in post-positional and pre-positional semi-random graph processes that construct a graph containing a copy of $H$ in at most $f(n)\cdot n^{(d-1)/d}$ rounds.
   \end{enumerate}
\end{theorem}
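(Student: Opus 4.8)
The plan is to reduce everything to results already available. For part~(a), the lower bound for the post-positional process is exactly the tightness result of Behague, Marbach, Pra\l at and Ruci\'nski, so there is nothing to prove there. The lower bound for the pre-positional process then follows immediately from the coupling inequality (Lemma~\ref{lem:relation_pre_post_positional} in Section~\ref{section:positional_model}): since the post-positional process can realise any property at least as fast as the pre-positional one, any lower bound valid for the post-positional process is \emph{a fortiori} valid for the pre-positional process. So part~(a) is free.

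For part~(b), the post-positional half is precisely Theorem~\ref{thm:degenerate-old}. The real content is exhibiting a pre-positional strategy matching the bound $f(n)\cdot n^{(d-1)/d}$. First I would recall the structure of the post-positional strategy for building a $d$-degenerate $H$: fix a degeneracy ordering $v_1,\dots,v_h$ of $H$, so each $v_i$ has at most $d$ neighbours among $v_1,\dots,v_{i-1}$; the construction builds the copy of $H$ by processing these vertices in order, and the bottleneck is connecting a newly offered random vertex to a specified set of already-placed vertices via $\le d$ edges, which costs roughly $n^{(d-1)/d}$ rounds because one waits for a random vertex to land in a suitable ``target set'' of size about $n^{1/d}$. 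The key observation is that in the pre-positional process the player controls the first endpoint, so the player can \emph{place} the first endpoint on a chosen already-embedded vertex $w$ and wait for the random second endpoint to hit a desired set; equivalently, the pre-positional process lets the player repeatedly ``fire from'' any fixed vertex $w$ and collect a uniformly random neighbour for $w$. I would argue that this is at least as useful, up to constants, as the post-positional ability used in Theorem~\ref{thm:degenerate-old}. Concretely: to attach the next vertex $v_i$ (needing edges to a prescribed set $S_i$ of $\le d$ earlier vertices), grow candidate neighbourhoods by firing repeatedly from the vertices of $S_i$; after $\Theta(n^{1/d})$ shots from each vertex of $S_i$, the pigeonhole/birthday-type estimate shows that a.a.s.\ some common vertex $x\notin V(H\text{-copy})$ receives an edge from every vertex of $S_i$, and we set $v_i:=x$. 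Summing over the $O(1)$ vertices of $H$ gives total cost $O(n^{1/d})$, hence $\le f(n)\cdot n^{(d-1)/d}$ once $f(n)\to\infty$; here I have actually been sloppy — the cost to attach one vertex with $d$ back-edges is $\Theta(n^{(d-1)/d})$, not $\Theta(n^{1/d})$ — so let me restate: firing $m$ shots from a fixed $w$ gives a random set of $m$ neighbours, and for a fixed pattern of $d$ required common neighbours among target sets of sizes $m$, the expected number of valid completions is $\sim m^d/n^{d-1}$, so taking $m = g(n)\cdot n^{(d-1)/d}$ with $g(n)\to\infty$ slowly makes this blow up and a second-moment argument gives existence a.a.s.; the total over $O(1)$ stages is $O(g(n)\, n^{(d-1)/d}) \le f(n)\, n^{(d-1)/d}$ after choosing $g\ll f$.

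The main obstacle is the concentration / second-moment step for the pre-positional attachment, because the $d$ back-edges at vertex $v_i$ must all land on the \emph{same} new vertex $x$, and the candidate neighbour sets generated by firing from different vertices of $S_i$ are only available as random subsets rather than sets the player gets to choose. I would handle this by a standard second-moment computation: let $Z$ count vertices $x\notin V(H)$ adjacent to all of $S_i$ after $m$ shots from each element of $S_i$; compute $\ex{Z}=\Theta(m^d/n^{d-1})$ and $\ex{Z^2}=\ex{Z}^2(1+o(1))$ using the near-independence of the shots (the dependence coming only from forbidding the already-used vertices and from collisions, both negligible when $m=o(n)$), then Chebyshev gives $Z>0$ a.a.s. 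A minor additional point, already implicit in the post-positional argument, is that the constant number of stages and the fact that $h=|V(H)|$ is fixed let us union-bound over all stages without loss; and the $d\ge 2$ hypothesis in~(a) (but not in~(b)) is only needed for the lower bound, matching exactly the situation in Theorem~\ref{thm:degenerate-old} and its converse. I would then assemble these pieces: (a) from the coupling lemma plus~\cite{Behague2021SubgraphHypergraphs}; (b) from Theorem~\ref{thm:degenerate-old} for the post-positional case and the staged second-moment construction above for the pre-positional case.
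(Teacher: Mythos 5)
Your proposal is correct and follows essentially the same route as the paper: part (a) via the Behague--Marbach--Pra\l{}at--Ruci\'nski lower bound combined with the coupling lemma, and part (b) by embedding $H$ one vertex at a time in degeneracy order, firing roughly $g(n)\,n^{(d-1)/d}$ pre-positional shots from each of the at most $d$ already-embedded neighbours and extracting a common new neighbour by a first/second-moment (concentration) argument. The paper packages this as an induction on $|V(H)|$ with explicit round bookkeeping, but the strategy and the probabilistic estimate are the same as yours.
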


Theorem~\ref{thm:degeneracy} immediately gives the following corollary.
\begin{corollary}
    Let $H$ be a fixed graph containing a cycle, and $f: \mathbb{N} \to \mathbb{N}$ a function such that $\lim_{n\to\infty}f(n) = \infty$.
    Then a.a.s.\ the following hold:
  \begin{enumerate}
       \item[(a)] no strategy in a post-positional or pre-positional semi-random graph process can construct a graph containing an $H$-minor in at most $ n^{1/2}/f(n)$ rounds;
       \item[(b)] there exist strategies in post-positional and pre-positional semi-random graph processes that construct a graph containing an  $H$-minor in at most $f(n)\cdot n^{1/2}$ rounds.
   \end{enumerate}
\end{corollary}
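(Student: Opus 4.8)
The plan is to realise the monotone property that $G$ contains an $H$-minor as lying between two properties whose round-complexity is already controlled, and then invoke Theorem~\ref{thm:degeneracy}. Since $H$ contains a cycle, every graph that has an $H$-minor contains a cycle. Conversely, let $H'$ denote the graph obtained from $H$ by subdividing each edge exactly once; then $H'$ is a subdivision of $H$, so $H$ is a topological minor, hence a minor, of $H'$, and therefore every graph that contains a copy of $H'$ has an $H$-minor. Thus the number of rounds needed to build an $H$-minor is at least that needed to build \emph{some} cycle, and at most that needed to build a copy of the fixed graph $H'$.

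For part~(b): the graph $H'$ is $2$-degenerate. Indeed, every edge of $H'$ is incident to a subdivision vertex (each original edge $uv$ became a path $u$--$x_{uv}$--$v$), so any subgraph $S$ of $H'$ that has an edge contains a subdivision vertex, of degree at most $2$ in $S$, while any edgeless subgraph has a vertex of degree $0$. Applying Theorem~\ref{thm:degeneracy}(b) to the fixed $2$-degenerate graph $H'$ with the given $f$, we obtain that a.a.s.\ there are strategies in both the post- and the pre-positional process that construct a graph containing a copy of $H'$ in at most $f(n)\cdot n^{(2-1)/2}=f(n)\cdot n^{1/2}$ rounds; any such graph has an $H$-minor, which gives~(b).

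For part~(a): by the reduction above it suffices to show that a.a.s.\ neither process produces a graph containing a cycle within $m:=n^{1/2}/f(n)$ rounds. This is a routine first-moment estimate, of the same flavour as the $d=2$ lower bound behind Theorem~\ref{thm:degeneracy}(a). Since exactly one edge is added per round, the first cycle appears in round $i$ only if the new edge joins two vertices of the same component of the current forest $F_{i-1}$, which has $i-1$ edges, hence at most $2(i-1)$ non-isolated vertices and all components of size at most $i$. In the post-positional process this forces the random endpoint $u_i$ to be non-isolated, an event of probability at most $2(i-1)/n$; in the pre-positional process it forces the random endpoint to fall into the component chosen by the player, an event of probability at most $i/n$. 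A union bound over $i\le m$ shows that the probability a cycle ever appears is $O(m^2/n)=O(1/f(n)^2)=o(1)$, uniformly over all strategies, which proves~(a). (Alternatively one proves this for the post-positional process and transfers it via Lemma~\ref{lem:relation_pre_post_positional}.)

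I do not expect a genuine obstacle here; the only subtlety worth flagging is that part~(a) is \emph{not} a literal instance of Theorem~\ref{thm:degeneracy}(a), since a minor need not be certified by a $2$-degenerate subgraph — for instance the unique minor-minimal graph with a $K_4$-minor is $K_4$ itself, which is $3$-degenerate — so the short ``the graph stays a forest'' argument above is needed in place of a black-box citation. The remaining verifications (that $H'$ is $2$-degenerate and that $H$ is a topological minor of $H'$) are immediate, and the corollary follows.
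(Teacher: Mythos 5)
Your proposal is correct and matches the paper's proof in essence: part (b) via the $1$-subdivision of $H$ being $2$-degenerate and Theorem~\ref{thm:degeneracy}(b), and part (a) via the first-moment argument that within $n^{1/2}/f(n)$ rounds the random square lands on an already non-isolated vertex with probability $O(t/n)$ per round, so a.a.s.\ the graph stays acyclic (the paper likewise proves this only for the post-positional process and transfers it by Lemma~\ref{lem:relation_pre_post_positional}). Your remark that (a) is not a black-box instance of Theorem~\ref{thm:degeneracy}(a) is a fair point and consistent with how the paper argues.
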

\begin{proof} For (a), it suffices to show that a.a.s.\ $G_t$ is acyclic for $t\le n^{1/2}/f(n)$ in any post-positional process. Suppose $G_t$ has a cycle. Considering only the edges (each of which joins a square and a circle) that make up the cycle, there must exist a square which lands on a vertex that has already received either a square or a circle earlier. For every $t\le n^{1/2}/f(n)$, the probability that $u_t$ lands on a vertex that has already received a square or a circle (there are $2(t-1)$ of them) is $O(t/n)$ and hence, the probability that $G_t$ contains a cycle for some $t\le n^{1/2}/f(n)$ is bounded by $\sum_{t\le n^{1/2}/f(n)} O(t/n)=o(1)$.

    Part (b) follows from considering the $1$-subdivision of $H$, that is the graph obtained from $H$ by subdividing each edge in $E(H)$ exactly once, and noting that this subdivision is $2$-degenerate. The bound then directly follows from Theorem~\ref{thm:degeneracy}.
\end{proof}

Next, we investigate the performance of the two processes in constructing a graph containing a large bipartite subgraph.

\begin{theorem}\label{thm:bipartite}
Suppose $m=o(n^2)$. Let ${\mathcal P}$ be the set of graphs on $[n]$ that contain a bipartite subgraph with $m$ edges. Then, the minimum number of rounds required to construct a graph in ${\mathcal P}$ is a.a.s.\  $(1+o(1))m$ in both the pre-positional and the post-positional processes.     
\end{theorem}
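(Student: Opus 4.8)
The plan is to establish matching lower and upper bounds in both processes. For the lower bound, observe that in any process each round adds exactly one edge, so after $t$ rounds the graph has at most $t$ edges; a fortiori any bipartite subgraph has at most $t$ edges. Hence one cannot obtain a bipartite subgraph with $m$ edges in fewer than $m$ rounds, and this trivial bound holds deterministically in both the pre- and post-positional settings. The work is therefore entirely in the upper bound: we must exhibit strategies that a.a.s.\ produce a bipartite subgraph with $m$ edges in $(1+o(1))m$ rounds. By Lemma~\ref{lem:relation_pre_post_positional} it suffices to do this for the \emph{pre-positional} process, since the post-positional process is at least as fast.

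For the pre-positional upper bound, I would fix an arbitrary balanced bipartition $[n] = A \cup B$ with $|A| = |B| = n/2$ (rounding as needed), and have the player always choose the strategic (first) vertex in $A$; the random second vertex then lands in $B$ with probability $1/2$ each round, independently. A round is called \emph{good} if the random vertex lands in $B$ and, moreover, the resulting edge is not already present in the graph; since we only ever need $m = o(n^2)$ edges and each vertex of $A$ will be chosen roughly $m/(n/2) = o(n)$ times, the probability a random vertex collides with an already-used neighbour of the current first vertex is $o(1)$, so a round is good with probability $(1+o(1))/2$. To collect $m$ good edges, run the process for $t = (2+o(1))m$ rounds; this gives $(1+o(1))m \cdot (1+o(1)) = m(1+o(1))$ good edges in expectation, and since the good-round indicators are independent Bernoulli variables (once we account for the vanishing collision probability, e.g.\ via a dominating independent sequence), a Chernoff bound shows the number of good edges is a.a.s.\ concentrated, hence at least $m$ once we run a few more rounds. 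But this only gives $2m(1+o(1))$ rounds, not $(1+o(1))m$.

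To get the sharp constant $1$, the player must be cleverer: the first vertex should be chosen \emph{adaptively} to exploit the bipartition. Here is the key idea. Maintain a growing bipartition of the vertices \emph{touched so far}. When the random second vertex $v_t$ arrives, it either is a brand-new vertex (whose side we get to assign freely, so we assign it opposite to the current first vertex, making the edge crossing) or it is an old vertex already placed on one side. To make nearly every edge crossing, the player should, before seeing $v_t$, pick the first vertex $u_t$ from the smaller colour class among already-touched vertices — but crucially, since $v_t$ is \emph{random over all of $[n]$} and only $O(m) = o(n^2)$, in fact $o(n)$ edges have been added, $v_t$ is a.a.s.\ an untouched vertex in essentially every one of the first $m(1+o(1))$ rounds (the number of touched vertices is at most $2t = o(n)$, so $\Pr[v_t \text{ touched}] = o(1)$). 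Thus a.a.s.\ all but $o(m)$ of the first $m$ rounds land on fresh vertices, each of which we colour to make its edge crossing, yielding a bipartite subgraph with $(1-o(1))m$ crossing edges; running $(1+o(1))m$ rounds then clears the deficit. The expected number of non-fresh rounds is $\sum_{t\le (1+o(1))m} O(t/n) = O(m^2/n) = o(m)$ since $m = o(n^2)$ — wait, this needs $m = o(n)$...

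Let me reconsider: $m = o(n^2)$ only gives $m^2/n = o(n^3)$, which is not $o(m)$ in general. The correct accounting is that the number of touched vertices after $t$ rounds is at most $2t$, and if $t = O(m)$ with $m = o(n^2)$ this is $o(n)$ only when $m = o(n)$. For the regime $n \ll m = o(n^2)$ a different argument is needed: there the bipartite subgraph has $m$ edges on $n$ vertices with average degree $2m/n \to \infty$, and the relevant fact is that a uniform random second vertex is \emph{usable} (i.e.\ can be connected to the chosen first vertex without creating a non-bipartite edge or a repeated edge) as long as the first vertex's current neighbourhood is $o(n)$; since each of the $n$ vertices is chosen as first vertex only $O(m/n)$ times and $m/n = o(n)$, this holds. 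The clean way to organize the proof is: partition the $m$ edges to be built into the bipartition-building phase and show that, because the max degree we ever need in our target bipartite graph is $o(n)$ (as $2m/n = o(n)$), the probability any given round "fails" (random vertex unusable) is $o(1)$, sum over $(1+o(1))m$ rounds to get $o(m)$ expected failures, apply Markov, and add $o(m)$ extra rounds to cover the shortfall. I expect \textbf{the main obstacle} to be this uniform-over-all-scales bookkeeping: making the "each round succeeds with probability $1-o(1)$" claim rigorous simultaneously for $m$ ranging over the whole interval $\omega(1)$ to $o(n^2)$, and handling the mild dependence between rounds (the set of used edges grows), most cleanly by a union bound over rounds combined with a crude deterministic cap on how many times any vertex is used as the strategic endpoint.
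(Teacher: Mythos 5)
Your lower bound and the reduction to the pre-positional process via Lemma~\ref{lem:relation_pre_post_positional} are exactly as in the paper, and your adaptive-colouring argument does work when $m=o(n)$. The genuine gap is in the regime $m=\Omega(n)$, which you correctly flag but then patch with a claim that fails. You assert that a uniformly random second vertex is ``usable'' as long as the strategic vertex's current neighbourhood is $o(n)$, and you allocate the strategic role to each of the $n$ vertices about $m/n$ times. But usability also requires the random vertex to land on the \emph{opposite side} of whatever bipartition certifies the bipartite subgraph, and your bookkeeping never secures this. If a bipartition is fixed in advance and every vertex serves as the strategic endpoint, then all rounds played from the side whose complement has size at most $n/2$ fail the crossing requirement with probability at least $1/2$, and those rounds are a constant fraction of the total, so the constant $1$ is lost. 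If instead no bipartition is fixed and you hope to extract one afterwards, the graph you build is a union of stars with uniformly random leaf sets, i.e.\ essentially a random graph with $m$ edges; for $m\gg n$ its largest bipartite subgraph (max cut) has only $(1/2+o(1))m$ edges, so again $(1+o(1))m$ rounds do not suffice with this strategy. In short, the step ``each round succeeds with probability $1-o(1)$'' is exactly the point that your proposal leaves unjustified for $m=\Omega(n)$.

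The paper avoids the case split entirely with a single unbalanced bipartition: take $A=[\lceil\sqrt{m}\rceil]$ and $B=[n]\setminus A$, and run $\lceil\sqrt{m}\rceil$ phases, in phase $i$ always choosing the strategic vertex $v_t=i\in A$ until $i$ has $\lceil\sqrt{m}\rceil$ distinct neighbours in $B$. A round fails only if the random vertex lands in $A\cup N_{G_{t-1}}(i)$, which has probability at most $2\lceil\sqrt{m}\rceil/n=o(1)$ because $m=o(n^2)$, uniformly over the whole range of $m$; the final graph contains a subgraph of $K_{A,B}$ with at least $\lceil\sqrt{m}\rceil^2\ge m$ edges, and a standard concentration argument gives $(1+o(1))m$ rounds a.a.s. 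Replacing your large-$m$ patch with this choice of $A$ (keeping your failure-counting scheme, now with per-round failure probability $O(\sqrt{m}/n)$) repairs the proof; your own criterion makes the right choice apparent, since one needs both the strategic side and the maximum required degree to be $o(n)$, which forces something like $|A|\asymp\sqrt{m}$.
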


While the proof for Theorem~\ref{thm:bipartite} is straightforward (See Section~\ref{section:dense_bipartite}), it is interesting to see if the theorem fails to hold when $m=\Theta(n^2)$. 
We think containing a bipartite subgraph with $\Omega(n^2)$ edges might be an increasing property (see its definition in Section~\ref{section:notation}) for which the post-positional process outperforms the pre-positional process. However, proving it seems not an easy task. We make the following conjecture.

\begin{conjecture}
    Suppose $m\ge cn^2$ for some fixed $c>0$. Let ${\mathcal P}$ be the set of graphs on $[n]$ that contain a bipartite subgraph with $m$ edges.  There exists $\delta_c,\eta_c>0$  such that a.a.s.\ there is a strategy in a post-positional process that construct a graph in ${\mathcal P}$ in less than $\eta_c n^2$ rounds, whereas no strategy in a pre-positional process can construct  a graph in ${\mathcal P}$ within $(\eta_c+\delta_c)n^2$ rounds. 
\end{conjecture}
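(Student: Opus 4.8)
Throughout I assume $m=m(n)\sim cn^2$ with $c\in(0,1/4)$ (the range in which $\mathcal P$ is eventually non-empty), and I write $\gamma_\pm:=\tfrac12\bigl(1\pm\sqrt{1-4c}\bigr)$, so that $\gamma_+\gamma_-=c$ and $\tfrac12<\gamma_+<1$; consequently $c<\gamma_-<2c$. The plan is to pin down, to within $o(n^2)$ rounds, the optimal performance of each process: the post-positional process needs $(1+o(1))m$ rounds, while the pre-positional process needs $(\gamma_- +o(1))\,n^2$ rounds, and these differ by a positive constant times $n^2$ because $\gamma_->c$.

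First I would treat the post-positional process, aiming for the trivial target $m$. Fix a balanced bipartition $[n]=A\cup B$, and use the strategy: when the offered vertex $u$ lies in $A$, respond with a uniformly random non-neighbour of $u$ inside $B$, and symmetrically for $u\in B$. A round fails to add a new crossing edge only if $u$ is already joined to its entire opposite side, i.e.\ $\deg(u)=n/2$. For a fixed $w\in A$, the number of the first $m$ rounds in which $w$ is offered is distributed as $\mathrm{Bin}(m,1/n)$ with mean $\sim cn<n/2$, so a Chernoff bound and a union bound over the $n$ vertices show that a.a.s.\ no vertex is offered $n/2$ times within the first $m$ rounds; hence a.a.s.\ no vertex ever saturates and every one of the first $m$ rounds adds a fresh crossing edge. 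Thus a.a.s.\ $G_m$ is bipartite with exactly $m$ edges, so $G_m\in\mathcal P$, and for any fixed $\eta_c>c$ there is a.a.s.\ a post-positional strategy reaching $\mathcal P$ within $\eta_c n^2$ rounds.

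Next, the pre-positional lower bound. Since $\mathcal P$ is increasing, it suffices to show that, for every fixed strategy and every fixed $\epsilon>0$, a.a.s.\ $\mathrm{maxcut}(G_T)<m$ once $T\le(\gamma_- -\epsilon)n^2$. Fix a strategy and a bipartition $(S,\bar S)$ with $|S|=\gamma n\le n/2$. If $\gamma<\gamma_-$ then $|S|\,|\bar S|=\gamma(1-\gamma)n^2<cn^2\le m$, so this cut cannot witness membership in $\mathcal P$; assume $\gamma\in[\gamma_-,\tfrac12]$. In round $i$ the player chooses $u_i$ from the history, and then $v_i$ is uniform on $[n]$ independently of that choice, so the probability (given the history) that round $i$ is $(S,\bar S)$-crossing is $\Pr[v_i\in\mathrm{opp}(u_i)]\le\max(\gamma,1-\gamma)=1-\gamma$. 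Hence the number of crossing rounds, which bounds $e_{G_T}(S,\bar S)$, is a sum of indicators with conditional means $\le 1-\gamma$, and the Azuma--Hoeffding inequality gives $\Pr\!\bigl[e_{G_T}(S,\bar S)\ge(1-\gamma)T+\delta n^2\bigr]\le\exp(-\Omega(n^2))$ for any fixed $\delta>0$ (using $T=\Theta(n^2)$, so the exponent is $\Omega(n^4/T)=\Omega(n^2)$). A union bound over the at most $2^n$ choices of $S$ shows that a.a.s.\ every cut with $\gamma\in[\gamma_-,\tfrac12]$ has $e_{G_T}(S,\bar S)\le(1-\gamma_-)T+\delta n^2=\gamma_+T+\delta n^2$, and together with the case $\gamma<\gamma_-$ we get a.a.s.\ $\mathrm{maxcut}(G_T)\le\gamma_+T+\delta n^2$. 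When $T\le(\gamma_- -\epsilon)n^2$ this is at most $(\gamma_+\gamma_- -\gamma_+\epsilon+\delta)n^2=(c-\gamma_+\epsilon+\delta)n^2<cn^2\le m$ for $\delta<\gamma_+\epsilon$, so a.a.s.\ $G_T\notin\mathcal P$.

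Combining the two parts, set $\eta_c:=\tfrac12(c+\gamma_-)$ and $\delta_c:=\tfrac14(\gamma_- -c)$, both positive since $\gamma_->c$. The post-positional bound supplies a strategy reaching $\mathcal P$ within $\eta_c n^2$ rounds (as $\eta_c>c$), while the pre-positional bound says no fixed pre-positional strategy reaches $\mathcal P$ within $(\gamma_- -o(1))n^2$ rounds; and $\eta_c+\delta_c=\tfrac14(c+3\gamma_-)<\gamma_-$, so in particular no pre-positional strategy succeeds within $(\eta_c+\delta_c)n^2$ rounds, as required. I expect the difficulty here to be bookkeeping rather than a single deep step: one must keep $T=\Theta(n^2)$ explicit so that the Azuma estimate genuinely beats the $2^n$-fold union bound over cuts, and one must be careful about the quantifier order — the statement is the ``for every strategy, a.a.s.'' one (which is what ``no strategy can construct'' should mean), and the stronger ``a.a.s., for every strategy'' version is in fact false, since for each realisation of $(v_i)$ one can hard-code a strategy that creates a fresh crossing edge every round. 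A secondary caveat is that for $c$ close to $1/4$, or if $m$ is genuinely allowed to be much larger than $cn^2$ (say near $n^2/4$), coupon-collector effects near the extremal density slow the post-positional process, so the clean statement above should be read with $m\sim cn^2$ and $c$ bounded away from $1/4$; determining the optimal $\delta_c$ would require handling those corrections.
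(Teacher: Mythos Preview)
This statement is a \emph{conjecture} in the paper; the authors explicitly say ``proving it seems not an easy task'' and give no proof. So you are not reproducing anything---you are attempting something new, and your pre-positional lower bound is the real contribution: for each fixed cut the crossing indicator in round $i$ has conditional mean at most $1-\gamma$, Azuma gives an $\exp(-\Omega(n^2))$ tail that survives the $2^n$-fold union bound, and the arithmetic with $\gamma_\pm$ is correct. That part is a genuine proof of the pre-positional half of the conjecture for $m\sim cn^2$, $c<1/4$.

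The post-positional upper bound, however, has a gap. You argue that a.a.s.\ no vertex $w$ is \emph{offered} (i.e.\ has $u_t=w$) as many as $n/2$ times, and then conclude ``hence no vertex ever saturates''. But saturation means $\deg(w)=n/2$, and with your strategy $\deg(w)$ is the number of rounds with $u_t=w$ \emph{plus} the number of rounds with $v_t=w$; you have bounded only the first contribution. Since your rule picks $v_t$ uniformly among the non-neighbours of $u_t$, the circle count of $w$ is not obviously $o(n)$---indeed its mean is of the same order $cn$ as the square count---so the inference fails as written. The claim that the post-positional process succeeds in $(1+o(1))m$ rounds is nevertheless true, but it needs a different strategy: for instance, fix a $2cn$-regular bipartite graph $H$ on $(A,B)$, orient it so every vertex has out-degree $cn$, and when $u_t=a$ play an unused out-neighbour of $a$ in $H$. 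Then edges of $H$ are only added from their tail side, so the round with $u_t=a$ is wasted only once $a$ has been offered at least $cn$ times; a Chernoff bound plus union bound shows that after $(1+\epsilon)m$ rounds a.a.s.\ every vertex has been offered at least $cn$ times, hence all $m$ edges of $H$ are present. With this repair your $\eta_c=(c+\gamma_-)/2$, $\delta_c=(\gamma_- - c)/4$ do what you want, and you have a proof of the conjecture in the regime $m\sim cn^2$ with $c\in(0,1/4)$.
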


Finally, we give an example of non-increasing ${\mathcal P}$ where the two processes perform very differently. 

\begin{theorem}\label{thm:induced}
    Let ${\mathcal P}$ be the set of multigraphs on $[n]$ that contains an induced simple $(n-1)$-cycle. Then, a.a.s.\ no pre-positional process can produce a multigraph in ${\mathcal P}$, whereas, a.a.s.\ a post-positional process can construct a multigraph in ${\mathcal P}$ in $O(n\log n)$ rounds. 
\end{theorem}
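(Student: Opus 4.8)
The statement splits into a pre-positional impossibility bound and a post-positional construction; I would treat them in that order.

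\textbf{Pre-positional direction.} Fix a vertex $w$ and set $S_w=[n]\setminus\{w\}$. A multigraph $G$ witnesses $\mathcal{P}$ through $w$ precisely when the induced submultigraph $G[S_w]$ equals the simple cycle $C_{n-1}$. Since edges are only ever added, if some (adaptive) strategy $\sigma$ drives the process into $\mathcal{P}$ through $w$ at time $T$, then $G_t[S_w]\subseteq C_{n-1}$ for all $t\le T$; so there is a (unique) round at which $G_t[S_w]$ first has $n-3$ edges — at which point it is a union of two vertex-disjoint paths spanning $S_w$, or a spanning path on $n-2$ vertices plus one isolated vertex — and a round at which it first has $n-2$ edges, at which point it is a Hamilton path on $S_w$ with some two endpoints $p,q$. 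Now the crucial feature of the pre-positional model: whenever the player picks a first endpoint inside $S_w$, the second endpoint is uniform on $[n]$. Hence, from the $(n-3)$-edge state, the probability that the next edge landing inside $S_w$ advances it to a Hamilton path (rather than producing a chord, a premature cycle, a parallel edge, or a loop — each of which makes $G[S_w]=C_{n-1}$ impossible forever) is at most $2/n$, while with probability $1-O(1/n)$ it kills $w$ permanently; viewing the subsequence of rounds in which the player plays into $S_w$ as an absorbing Markov chain, the probability of ever advancing is $O(1/n)$. The same estimate from the Hamilton-path state gives probability $O(1/n)$ of ever closing the cycle. Multiplying, $\sup_\sigma\Pr[\sigma\text{ ever reaches }\mathcal{P}\text{ through }w]=O(1/n^2)$, and a union bound over the $n$ choices of $w$ yields $o(1)$.

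\textbf{Post-positional direction.} Fix $w=n$ and $S=[n-1]$, and run a strategy that maintains a family of vertex-disjoint paths spanning $S$ (initially $n-1$ isolated vertices). Phase~1: in a round whose random first vertex $u_t$ is an endpoint of some path while at least two paths remain, direct the edge from $u_t$ to an endpoint of a \emph{different} path, merging the two paths (this is always a legal simple edge and creates no cycle); in every other round (when $u_t=w$, or $u_t$ is an internal vertex of a path) direct the edge so that it is incident to $w$, leaving $G[S]$ unchanged. When $c$ paths remain there are at least $c$ eligible endpoints, so a merge occurs with probability at least $c/n$ in each round; thus the path count falls from $n-1$ to $1$ in a number of rounds stochastically dominated by $\sum_{c=2}^{n-1}\mathrm{Geom}(c/n)$, which has mean $(1+o(1))n\ln n$ and variance $O(n^2)$, hence is $O(n\log n)$ a.a.s.\ by Chebyshev. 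Phase~2: we now have a single Hamilton path on $S$ with endpoints $p,q$; in each round, if $u_t\in\{p,q\}$ direct the edge to the other endpoint, completing $C_{n-1}$ on $S$ and hence landing in $\mathcal{P}$, and otherwise direct the edge so that it is incident to $w$; since this succeeds with probability $2/n$ per round, a.a.s.\ it is done within a further $O(n\log n)$ rounds. Altogether the strategy reaches $\mathcal{P}$ in $O(n\log n)$ rounds a.a.s.

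\textbf{Main obstacle.} The delicate part is the pre-positional bound. A one-round argument shows only that $\mathcal{P}$ is reached through a fixed $w$ with probability $O(1/n)$, which after the union bound over the $n$ choices of $w$ is merely $O(1)$; one genuinely needs the two-step estimate — that both the penultimate and the final edge inside $S_w$ are ``expensive'' and that failing either is irreversible — and it must be made uniform over all adaptive (non-clairvoyant) strategies, which is why I would phrase everything through the absorbing Markov chain on the rounds the player plays into $S_w$. The remaining care is with the multigraph conventions (loops, and whether the uniform endpoint may equal the chosen endpoint), which only affect constants. The post-positional part is a routine coupon-collector computation.
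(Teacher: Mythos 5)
Your proof is correct, and its pre-positional half takes a genuinely different route from the paper's. You fix the excluded vertex $w$ in advance, note that on the way to an induced spanning cycle on $S_w=[n]\setminus\{w\}$ the graph $G[S_w]$ must first be a two-path configuration with $n-3$ edges and then a Hamilton path, and bound each of the last two advances by $O(1/n)$ via the absorbing-chain argument (advance with probability at most $2/n$ resp.\ $1/n$, survive harmlessly only when the uniform endpoint hits $w$, probability $1/n$, otherwise ruined forever); since these per-round bounds are uniform in the player's choice, the argument is valid for adaptive strategies, gives $O(1/n^2)$ per $w$, and survives the union bound over the $n$ choices of $w$ --- as you rightly observe, a one-edge estimate would not. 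The paper avoids the union bound altogether: it argues that an induced spanning path on $n-1$ vertices must appear first, and that its excluded vertex $w$ is a.a.s.\ the \emph{only} candidate, because by a balls-in-bins/Poisson estimate $\Theta(n)$ vertices receive at least three squares within the first $n-1$ rounds and all of these are forced to be adjacent to $w$; with $w$ pinned down, a single ``last edge'' estimate of $O(1/n)$ suffices. So you trade the paper's structural claim for a two-step probabilistic estimate, while the paper trades the two-step estimate for the structural claim. On the post-positional side your strategy merges vertex-disjoint paths on $[n-1]$ and then closes the Hamilton path (a sum of geometrics plus Chebyshev, $O(n\log n)$ a.a.s.), whereas the paper fixes the target cycle $1,2,\ldots,n-1$ in advance, joins each vertex to its successor the first time it is hit and dumps all other edges on the sink vertex $n$, finishing by a direct coupon-collector argument; both are routine and yield the same $O(n\log n)$ bound.
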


\section{Notation}
\label{section:notation}
For a graph $G$, we denote its vertex and edge sets by $V(G)$ and $E(G)$ respectively. We denote the degree of a vertex $v \in V(G)$ in graph $G$ by $\deg_G(v)$. We use $\delta(G)$ and $\Delta(G)$ to denote the minimum and maximum degrees of a graph respectively. For a set $S \subseteq V(G)$ of vertices, we use $G[S]$ for the subgraph induced by set $S$ in graph $G$. The open and closed neighbourhoods of a vertex $v\in V(G)$ in graph $G$ will be denoted by $N_G(v)$ and $N_G[v]$ respectively.

Both variants of the semi-random graph processes are a single-player game in which a multi-graph is iteratively constructed in a sequence of rounds. Because all the graph properties we consider are invariant under adding multi-edges and loops, we generally consider the underlying simple graph. Notably, we define the degree of a vertex in the multi-graph to be the number of distinct neighbours, not including itself. That is, $\deg_G(v) = |N_G(v) \setminus \{v\}|$ for each vertex $v \in V(G)$. Moreover, we will use the previously introduced notation for simple graphs for the graphs generated by the process as well.

In each round of the semi-random graph process (either variant), a single edge is added to the graph. We will denote the graph obtained after $\ell$ rounds by $G_{\ell}$. The initial graph, $G_0$, is an empty graph with vertex set $[n]$.  In the $t^{\text{th}}$ round, we construct graph $G_t$ from graph $G_{t-1}$ as follows. Let $u_t$ be a vertex picked u.a.r.\ from $[n]$. We say that vertex $u_t$ is hit in round $t$. We choose a vertex $v_t\in [n]$ according to some strategy, and add edge $u_tv_t$ to graph $G_{t-1}$ to obtain graph $G_t$. The strategy can be a function of $u_t$ in the post-positional variant, and must be independent of $u_t$ in the pre-positional variant. Note that if $u_t=v_t$ the new edge is a loop, and if $G_{t-1}$ already contained $u_tv_t$ the new edge is a multi-edge. Thus, $V(G_t) = V(G_{t-1})=[n]$, and $E(G_t) = E(G_{t-1}) \cup \{u_tv_t\}$. Additionally, we refer to $u_t$ as a square, and $v_t$ as a circle in round $t$, as introduced by Gao, MacRury and Pra{\l}at~\cite{Gao2021PerfectProcess}. Each edge in graph $G_t$ then connects a square and a circle in the round that it is added. 

We denote a graph $G$ having property $\mathcal{P}$ by $G \in \mathcal{P}$. We say that a graph property $\mathcal{P}$ is \emph{increasing} if for every $G \in \mathcal{P}$,  $H\in \mathcal{P}$ provided that $G\subseteq H$. Note that by this definition, if $G_t \in \mathcal{P}$ for some $t > 0$ and a monotone graph property $\mathcal{P}$, it follows that $G_{t'} \in \mathcal{P}$ for all $t' \geq t$ as well. Except for the example in Theorem~\ref{thm:induced}, all properties investigated in this paper are increasing properties.




If ${\mathcal P}$ is increasing, it is sufficient to construct a graph $G_t$ which has a subgraph $G'$ in $\mathcal{P}$. In some rounds, given $G_{t-1}$ (and vertex $u_t$ in the post-positional process), we may choose vertex $v_t$ arbitrarily and not use the edge $u_tv_t$ for the construction of $G'$. We will consider such a round a \emph{failure round}. Allowing failure rounds in some cases leads to algorithms that are easier to analyse. In Section~\ref{section:induced_cycles} where a non-increasing property is studied, we cannot simply ignore ``undesirable'' edges to make use of failure rounds.

We say an event $A = A_n$ occurs asymptotically almost surely (a.a.s.) in $G_t$ if $\mathbb{P}(A_n) \to 1$ as $n \to \infty$. 
Unless specified otherwise, all asymptotic notation relates to $n$, i.e.\ $o(1)$ implies a function that tends to $0$ as $n \to \infty$.

\section{Pre- and post-positional processes}
\label{section:positional_model}

In this section we prove that the post-positional process can construct a  graph in ${\mathcal P}$ at least as fast as the pre-positional process, for any graph property ${\mathcal P}$.

\begin{lemma}
\label{lem:relation_pre_post_positional}
Let $n\ge 2$ and ${\mathcal P}\subseteq 2^{\binom{[n]}{2}}$. For every $t\ge 0$, the probability that there exists a pre-positional strategy to construct $G_s\in {\mathcal P}$ for some $s\le t$ is at most the probability that there exists a post-positional strategy to construct $G_s\in {\mathcal P}$ for some $s\le t$. 
\end{lemma}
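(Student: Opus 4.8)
The plan is to couple the two processes so that every pre-positional strategy is simulated by a post-positional strategy on the same underlying randomness, making the post-positional player's success set contain the pre-positional player's. The key conceptual point is the asymmetry in \emph{timing of information}: in the pre-positional process the player commits to $v_t$ before seeing the random vertex, which in that round is called $u_t$; in the post-positional process the player sees the random vertex (there called $u_t$) before committing to the other endpoint. Since an edge is an unordered pair, a post-positional player can always \emph{ignore} the revealed vertex and pretend it is playing pre-positionally — but the converse fails.

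\textbf{Setup of the coupling.} I would let $(r_1, r_2, \dots)$ be a single sequence of i.i.d.\ uniform vertices in $[n]$, used to drive both processes. Fix an arbitrary pre-positional strategy $\mathcal{S}^{\mathrm{pre}}$; at round $t$ it outputs a vertex $v_t = v_t(G_{t-1})$ depending only on the current graph (and possibly auxiliary internal randomness, which we also fix as part of the common probability space). Define a post-positional strategy $\mathcal{S}^{\mathrm{post}}$ as follows: in round $t$, after the random vertex $r_t$ is revealed (playing the role of the post-positional $u_t$), $\mathcal{S}^{\mathrm{post}}$ computes $v_t(G_{t-1})$ using the \emph{same} function as $\mathcal{S}^{\mathrm{pre}}$ and adds the edge $r_t v_t$. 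The point is that this is a legal post-positional move — it is allowed to depend on $r_t$, it simply chooses not to. By induction on $t$, the two processes produce the identical sequence of graphs $G_0, G_1, \dots, G_t$ on this coupled space: the base case $G_0$ is the empty graph on $[n]$ in both, and if $G_{t-1}$ agrees then round $t$ adds the edge $\{r_t, v_t(G_{t-1})\}$ in both processes. Here one must check that ``the edge added in round $t$ of the pre-positional process is $\{u_t, v_t\}$ where $u_t = r_t$'' matches the post-positional description with the roles swapped, which is immediate since the edge is unordered.

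\textbf{Concluding the inequality.} On the coupled space, for the fixed strategy $\mathcal{S}^{\mathrm{pre}}$ and its simulator $\mathcal{S}^{\mathrm{post}}$, the event $\{\exists s \le t: G_s \in \mathcal{P}\}$ is literally the same event. Hence for each fixed pre-positional strategy there is a post-positional strategy achieving $G_s \in \mathcal{P}$ for some $s \le t$ with exactly the same probability; in particular, taking the pre-positional strategy that maximises this probability and then passing to a supremum over post-positional strategies can only increase it, giving
\[
\pr\big(\exists\,\mathcal{S}^{\mathrm{pre}},\ \exists s\le t:\ G_s \in \mathcal{P}\big) \;\le\; \pr\big(\exists\,\mathcal{S}^{\mathrm{post}},\ \exists s\le t:\ G_s \in \mathcal{P}\big).
\]
One subtlety worth a sentence: the statement quantifies over the existence of a strategy, so to be careful I would note that with a finite horizon $t$ and finite vertex set, the set of deterministic strategies up to round $t$ is finite (each is a finite decision tree), so the suprema above are attained and the ``best pre-positional strategy'' is well-defined; randomised strategies are then convex combinations and do not help beyond the best deterministic one.

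\textbf{Main obstacle.} There is no deep obstacle — the result is genuinely soft — but the part requiring the most care is making the notion of ``strategy'' precise enough that the simulation argument is airtight, in particular handling possible internal randomness of the player and the order of revelation of $r_t$ versus the player's move, so that $\mathcal{S}^{\mathrm{post}}$'s dependence on $r_t$ (namely, none) is clearly within the rules. I would phrase everything in terms of a common probability space carrying the uniform vertex sequence plus the player's randomness, so that both processes are deterministic functions of it and the coupled equality of graph sequences is a pointwise statement.
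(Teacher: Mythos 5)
Your proposal is correct and follows essentially the same route as the paper: couple both processes on a single i.i.d.\ uniform vertex sequence, let the post-positional player copy the pre-positional player's choice of $v_t$ (ignoring the revealed vertex, which is a legal post-positional move), and observe the two graph sequences coincide pointwise, so the success events agree. Your extra remarks on finiteness of deterministic strategies and internal randomness are careful additions but do not change the argument.
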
 
    
    


\begin{proof}
    We can couple the two processes so that no matter which strategy the pre-positional process uses, the post-positional process can copy the moves and can stop the process at the same time as the pre-positional one. Let $(u_i)_{i\ge 0}$ be a sequence of i.i.d.\ copies of $u$ chosen u.a.r.\ from $[n]$. Present $u_i$ to be the $i$-th square for both processes. For each $t\ge 1$, let $v_t$ be the choice of the $t$-th circle by the pre-positional process. Note that the choice of $v_t$ depends only on $\{u_i,v_i, 1\le i\le t-1\}$. The post-positional process simply copies the choices of $v_t$ for every $t$, which are valid moves given its definition. Thus, the two processes always terminate at the same time. 
\end{proof}

Thanks to Lemma~\ref{lem:relation_pre_post_positional}, it suffices to prove Theorem~\ref{thm:k-connect}(a) and~\ref{thm:degeneracy}(a) for post-positional processes, and prove Theorem~\ref{thm:k-connect}(b), ~\ref{thm:degeneracy}(b) and Theorem~\ref{thm:bipartite} for pre-positional processes. 

We prove Theorem~\ref{thm:k-connect} in Section~\ref{section:k-connectivity}, Theorem~\ref{thm:degeneracy} in Section~\ref{section:degeneracy}, and Theorem~\ref{thm:bipartite} in Section~\ref{section:dense_bipartite}, and Theorem~\ref{thm:induced} in Section~\ref{section:induced_cycles}.

\section{$k$-Connectivity: proof of Theorem~\ref{thm:k-connect}}
\label{section:k-connectivity}
A connected graph $G$ is said to be $k$-connected if it remains connected when removing fewer than $k$ vertices. In their seminal paper, Ben-Eliezer, Hefetz, Kronenberg, Parczyk, Shikhelman and Stojakovi{\'{c}}~\cite{Ben-Eliezer2020Semi-randomProcess} provide tight asymptotic bounds for the minimum number of rounds needed in the post-positional process to produce a $k$-connected graph for all $k \geq 3$. Their lower bounds follow directly by coupling with a well-known random graph process called the $k$-min process. By Lemma~\ref{lem:relation_pre_post_positional}, these lower bounds are valid for the pre-positional process as well. As a warming up, we will go through their argument and show how it also works directly in the pre-positional setting.

\subsection{Min-degree process: proof of Theorem~\ref{thm:k-connect}(a)}
\label{subsection:min-degree_process}
The min-degree process is a variant on the classical random graph process and was introduced and first studied by Wormald~\cite{Wormald1999TheAlgorithms}. In the min-degree process, $G_0$ is an edgeless graph on $[n]$. Given $G_t$, choose a vertex $u$ of minimum degree in $G_t$ u.a.r., and subsequently choose a vertex $v$ not adjacent to vertex $u$ in graph $G_t$ u.a.r. Graph $G_{t+1}$ is then constructed by adding edge $uv$ to graph $G_t$. Recall $\alpha_k$ in Theorem~\ref{thm:k-connect}. Wormald used his differential equation method to prove that the minimum $t$ where $G_t$ has minimum degree at least $k$ is a.a.s.\ $\alpha_k n$, for each $k \geq 2$. We denote the graph property of having minimum degree $k$ by $\mathcal{D}_k$.

Ben-Eliezer, Hefetz, Kronenberg, Parczyk, Shikhelman and Stojakovi{\'{c}}~\cite{Ben-Eliezer2020Semi-randomProcess} have since studied adapted versions of the min-degree process as modelled by the semi-random graph process. By choosing $v_t$ u.a.r.\ from all vertices of minimum degree not adjacent to $u_t$ in graph $G_t$, the resulting semi-random graph process is contiguous to the min-degree process. That is, asymptotically the two processes are equivalent. We refer to this strategy as $\mathcal{S}_{\text{min}}$. Ben-Eliezer, Hefetz, Kronenberg, Parczyk, Shikhelman and Stojakovi{\'{c}} additionally considered strategies without the restrictions on $v_t$ and $u_t$ to be non-adjacent in $G_t$ and $u_t$ and $v_t$ to be distinct. They showed that each of these strategies are optimal in ensuring graph $G_t$ having minimum degree $k$ in as few rounds as possible when taking $n$ to infinity, and each asymptotically require $\alpha_k n$ rounds ($k\ge 2$). Each of these strategies thus obtains a graph in $\mathcal{D}_k$ in asymptotically the same number of rounds as the min-degree process. 

We first provide a formal definition of strategy $\mathcal{S}_{\text{min}}$. For each round $t$, distribution function $f_t$ is defined as follows. Let $Y_{t, \, \text{min}} = \{v \in [n] \, | \, \deg_{G_{t-1}}(v) = \delta(G_{t-1})\}$. Then, given $u_t$ chosen u.a.r.\ from $[n]$, if $Y_{t, \, \text{min}} \setminus N_{G_{t-1}}[u_t] = \emptyset$, the round is considered a failure round. Otherwise, vertex $v_t$ is chosen u.a.r.\ from $Y_{t, \, \text{min}} \setminus N_{G_{t-1}}[u_t]$. By this formulation, strategy $\mathcal{S}_{\text{min}}$ does not create loops nor multi-edges. 
Let $G_{\text{min}}(n, m)$ be the graph on $n$ vertices with $m$ edges generated by the min-degree process. To show that strategy $\mathcal{S}_{\text{min}}$ can be used to model the min-degree process for $m = o(n^2)$ with a.a.s.\ $o(m)$ failure rounds, Ben-Eliezer, Hefetz, Kronenberg, Parczyk, Shikhelman and Stojakovi{\'{c}}~\cite{Ben-Eliezer2020Semi-randomProcess} look at an auxiliary strategy where $v_t$ is chosen u.a.r.\ from all minimum degree vertices. This strategy thus does not take the neighbourhood of $u_t$ into account. They then show that the number of multi-edges and loops is asymptotically bounded by $o(m)$, which directly bounds the failure rounds of strategy $\mathcal{S}_{\text{min}}$ as well. We note that this auxiliary strategy is also valid in the pre-positional process, where the first vertex is chosen u.a.r.\ from the vertices of minimum degree. Hence, the pre-positional process can also model the min-degree process with a.a.s.\ $o(m)$ failure rounds. Since having minimum degree $k$ is a prerequisite for being $k$-connected for $n > k$, this immediately implies Theorem~\ref{thm:k-connect}(a) for $k\ge 2$. The case $k=1$ is trivial, as a connected graph has at least $n-1$ edges and thus no strategy can build a connected graph in at most $(1-\eps)n$ rounds. \qed

\subsection{Proof of Theorem~\ref{thm:k-connect}(b)}

We consider the set of $k$-connected graphs on $[n]$, which is an increasing property. It is convenient to define some notation to assist the proof of Theorem~\ref{thm:k-connect}(b).

For an increasing property $\mathcal{P}$, a strategy $\mathcal{S}$ ($\mathcal{S}$ may be a pre-positional or a post-positional strategy), and a real number $0 < q < 1$, let $\tau_{\mathcal{P}}(\mathcal{S},q, n)$ be the minimum value $t \geq 0$ such that $\mathbb{P}\left[G_t \in \mathcal{P}\right] \geq q$; recalling that $n$ is the number of vertices in $G_t$. If no such value $t$ exists, we say that $\tau_{\mathcal{P}}(\mathcal{S}, q, n) = \infty$. Let $\tau_{\mathcal{P}}(q, n)$ denote the minimum value of $\tau_{\mathcal{P}}(\mathcal{S}, q, n)$ over all possible strategies $\mathcal{S}$. We are interested in the asymptotic value of $\tau_{\mathcal{P}}(q, n)$ when probability $q$ approaches $1$. Therefore, we define
\[\tau_{\mathcal{P}} := \lim_{q\, \toup\, 1}\limsup_{n \to \infty} \frac{\tau_{\mathcal{P}}(q, n)}{n},\] where the limit exists since ${\mathcal P}$ is increasing. This definition is useful for studying linear-time strategies (strategies that a.a.s.\ builds a graph in ${\mathcal P}$ in $\Theta(n)$ rounds), which is the case for 
\[
{\mathcal P}={\mathcal C}_k:=\{G\subseteq \binom{[n]}{2}:\ G\ \text{is $k$-connected}\}. 
\]

To show Theorem~\ref{thm:k-connect}(b), it suffices to prove that in the pre-positional process,
\[
\tau_{{\mathcal C}_k}\le \alpha_k\quad \text{for every fixed $k\ge 1$}. \]

Let \emph{$k$-min process} be the process of applying strategy $\mathcal{S}_{\text{min}}$ until obtaining a graph with minimum degree at least $k$. Ben-Eliezer, Hefetz, Kronenberg, Parczyk, Shikhelman and Stojakovi{\'{c}}~\cite{Ben-Eliezer2020Semi-randomProcess} proved  Theorem~\ref{thm:k-connect} for the case $k \ge 3$ in the post-positional process. Their proof is based on a slightly modified variant of the $k$-min process tailored for multigraphs and builds on a proof by Kang, Koh, Ree and {\L}uczak~\cite{Kang2006TheProcess}. The strategy $\mathcal{S}_{\text{min}}^*$ underlying their modified process is identical to the strategy for the $k$-min process as long as the graph is simple, and simplifies the analysis in the semi-random graph process. The proof shows that the graph resulting from the modified $k$-min process is a.a.s.\ $k$-connected for all $k \geq 3$. This proof cannot be directly extended to $k < 3$, as Kang, Koh, Ree and {\L}uczak~\cite{Kang2006TheProcess} showed that the graph resulting from the $k$-min process is only a.a.s.\ connected for $k \geq 3$.

Strategy $\mathcal{S}_{\text{min}}^*$ chooses vertex $v_t$ u.a.r.\ from all vertices of $V(G_{t-1}) \setminus \{u_t\}$ that have the smallest number of distinct neighbours. This strategy can be modelled in the pre-positional process by the following strategy: we choose $u_t$ u.a.r.\ from all vertices that have the smallest number of distinct neighbours, and consider the round a failure if $u_t = v_t$. The probability of any given round being a failure round is thus $1/n$. Hence, the number of additional rounds needed to cover the additional failure rounds is a.a.s.\ $o(n)$.  Hence, it immediately gives the following lemma.
\begin{lemma}
\label{lem:pre-positional_k-connectedness} In both the pre-positional and the post-positional process,
    $\tau_{\mathcal{C}_k}=\alpha_k$ for all fixed $k \geq 3$.
\end{lemma}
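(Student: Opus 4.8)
The plan is to prove Lemma~\ref{lem:pre-positional_k-connectedness} by reducing the pre-positional statement to the post-positional one already established in~\cite{Ben-Eliezer2020Semi-randomProcess} for $k\ge 3$, while observing that the lower bound is handled uniformly by Theorem~\ref{thm:k-connect}(a) (proved in Section~\ref{subsection:min-degree_process}). So the only real content is the upper bound $\tau_{\mathcal{C}_k}\le\alpha_k$ in the pre-positional process for $k\ge 3$. First I would recall that in the post-positional process the strategy $\mathcal{S}_{\text{min}}^*$, which picks $v_t$ uniformly among vertices of $V(G_{t-1})\setminus\{u_t\}$ with the fewest distinct neighbours, a.a.s.\ produces a $k$-connected graph within $(\alpha_k+o(1))n$ rounds; this is exactly the content of the Kang--Koh--Ree--{\L}uczak analysis adapted to multigraphs in~\cite{Ben-Eliezer2020Semi-randomProcess}.

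Next I would describe the pre-positional simulation: in the pre-positional process we do not see $u_t$ before committing $v_t$, so instead we let the player pick $v_t$ (the vertex whose identity was, in the post-positional run, played as ``$u_t$'') uniformly among all vertices of $G_{t-1}$ with the smallest number of distinct neighbours, and then receive the random endpoint (which plays the role of the former ``$v_t$''). The one mismatch is the constraint $u_t\ne v_t$ in $\mathcal{S}_{\text{min}}^*$: in the pre-positional run the random endpoint can coincide with the chosen min-degree vertex, producing a useless loop. Since the chosen vertex is a single fixed vertex among $n$, this happens with probability exactly $1/n$ in each round, independently of the history. Thus over $O(n)$ rounds the number of such failure rounds is a.a.s.\ $o(n)$ by a Chernoff/Markov bound, and these failure rounds are simply skipped; the resulting graph still follows the law of the modified $k$-min process up to the $o(n)$ wasted rounds.

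The verification step is then: conditioned on discarding the $o(n)$ loop-failure rounds, the pre-positional process produces exactly the same distribution of multigraphs after $t$ non-failure rounds as the post-positional process under $\mathcal{S}_{\text{min}}^*$ after $t$ rounds — both choose the ``active'' vertex uniformly among current minimum-distinct-degree vertices and then an independent uniform second endpoint — so the hitting time for $k$-connectivity is shifted by at most the number of failure rounds, i.e.\ by $o(n)$ a.a.s. Combining with the post-positional result gives $\tau_{\mathcal{C}_k}\le\alpha_k+o(1)$ in the pre-positional process, and with Lemma~\ref{lem:relation_pre_post_positional} and the matching lower bound from Theorem~\ref{thm:k-connect}(a) we conclude $\tau_{\mathcal{C}_k}=\alpha_k$ in both processes for $k\ge 3$.

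I do not expect a serious obstacle here, since the excerpt has essentially already laid out the coupling; the only point that needs care is confirming that the $1/n$ failure probability per round is genuinely independent of the past (it is, because the set of minimum-distinct-degree vertices is determined by $G_{t-1}$ and the random endpoint is fresh), so that a standard concentration inequality gives $o(n)$ total failures a.a.s. The mildly delicate conceptual point — worth stating explicitly — is that this argument only covers $k\ge 3$: for $k\le 2$ the underlying $k$-min process is not a.a.s.\ connected, so Lemma~\ref{lem:pre-positional_k-connectedness} cannot be pushed below $k=3$ and the cases $k=1,2$ of Theorem~\ref{thm:k-connect}(b) must be treated separately elsewhere.
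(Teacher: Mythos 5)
Your proposal is correct and follows essentially the same route as the paper: simulate the modified min-degree strategy $\mathcal{S}_{\text{min}}^*$ in the pre-positional process by choosing the strategic vertex u.a.r.\ among minimum-degree vertices, treat a coincidence with the random endpoint (probability $1/n$ per round) as a failure costing a.a.s.\ $o(n)$ extra rounds, and invoke the Ben-Eliezer et al.\ / Kang--Koh--Ree--{\L}uczak result for $k\ge 3$ together with the $k$-min lower bound. The only microscopic overstatement is the claim of \emph{exact} distributional equality after discarding failures (it can differ on the probability-$1/n$ event that the random vertex is the unique minimum-degree vertex), but this is immaterial to the asymptotic conclusion and is glossed over at the same level in the paper.
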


Moreover, note that the case $k = 1$  is trivial in the post-positional process. Namely, we observe that one can build a forest containing $m \leq n -1$ edges in exactly $m$ rounds. In each round, we simply choose $v_t$ that lies in a different component of $G_{t-1}$ from  $u_t$. Hence, we can build a spanning tree in $n-1$ rounds, which is obviously optimal. The following lemma shows that the pre-positional process requires asymptotically the same number of rounds to construct a connected graph.

\begin{lemma}
\label{lem:pre-positional_1-connectedness}
    $\tau_{\mathcal{C}_1} = 1$ in the pre-positional process.
\end{lemma}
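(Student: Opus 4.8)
The plan is to show that in the pre-positional process one can a.a.s.\ build a connected (equivalently, spanning tree) graph in $(1+o(1))n$ rounds. Since a connected graph on $n$ vertices needs at least $n-1$ edges, the lower bound $\tau_{\mathcal{C}_1}\ge 1$ is immediate, so the entire content is the matching upper bound. The obvious obstacle is that the pre-positional player commits to $u_t$ \emph{before} seeing the random partner $v_t$, so the clean post-positional strategy of always connecting two distinct components does not transfer: the player names a vertex $u_t$ in one component, but the random circle $v_t$ may land in the same component, wasting the round.

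The strategy I would use: maintain the forest $F_{t-1}\subseteq G_{t-1}$ built so far. As long as $F_{t-1}$ is not spanning, let $C^\star$ be (say) a largest component of $F_{t-1}$ and pick $u_t$ to be an arbitrary vertex of $C^\star$; the round is a \emph{success} if $v_t$ lands outside $C^\star$ (in which case we add $u_tv_t$ to the forest, merging two components) and a \emph{failure} otherwise. After the forest becomes spanning we are done. The key estimate is that at every step before completion, $|C^\star|\le n-1$, so the probability of success in a given round is at least $1/n$; more usefully, one should track the number of components $c_t = c(F_t)$, which starts at $n$ and must reach $1$, and note that every success decreases $c_t$ by exactly one. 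Thus the number of rounds needed is $n-1$ plus the number of failure rounds, and the task is to show the number of failures is $o(n)$ a.a.s.

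To bound the failures, I would split the process into two phases. In \textbf{Phase 1}, run until the forest has, say, one component of size at least $n - n/\log n$ (a ``giant''); here use a greedy component-merging rule that keeps components reasonably balanced so that the probability a random $v_t$ lands in a ``new'' component is bounded below by a constant for most of the phase — a straightforward first-moment/coupling argument shows Phase 1 finishes in $O(n)$ rounds with $O(n)$ — in fact $(1+o(1))n$ if one is careful — total rounds, or at worst within $(1+o(1))n$; actually the cleanest route is to observe Phase 1 only needs to run until $n - o(n)$ vertices are absorbed, and to absorb vertex count $\le n$ one needs $\le n$ successes, so it suffices that the failure count here is $o(n)$, which follows since while $|C^\star|\le (1-\delta)n$ each round succeeds with probability $\ge\delta$, giving $O(\delta^{-1}\log$-type concentration. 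In \textbf{Phase 2}, we have a giant component $C$ with $|C|\ge n-n/\log n$ and at most $n/\log n$ leftover vertices; set $u_t\in C$ each round, so a round fails only if $v_t\in C$ again, which happens with probability $|C|/n$, but we only need $|V\setminus C|\le n/\log n$ more successes, and the expected number of rounds to get $s$ successes when each has success probability $\ge 1 - |C|/n \ge $ (the last leftover fraction) is $O(s\cdot n/|V\setminus C|)$... which is the wrong bound. So instead, in Phase 2 I would reverse roles conceptually: pick $u_t$ to be a fixed leftover vertex $w$; then the round attaches $w$ to whatever $v_t$ is, and $w$ gets absorbed into a larger component unless $v_t = w$, probability $1/n$. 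Each leftover vertex is then absorbed in a geometric number of rounds with mean $\le n/(n-1)$, so absorbing all $\le n/\log n$ leftovers costs $(1+o(1))n/\log n = o(n)$ rounds a.a.s.\ by a Chernoff bound on the sum of independent geometrics. Combining the two phases gives total rounds $(1+o(1))n$, hence $\tau_{\mathcal{C}_1}\le 1$, and with the trivial lower bound, $\tau_{\mathcal{C}_1}=1$.

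The main obstacle is getting Phase 1 to cost only $(1+o(1))n$ rather than $O(n)$ rounds: naively, balanced merging still wastes a constant fraction of rounds to self-collisions. The fix is to be less ambitious in Phase 1 — only run it until a single component reaches linear size (say $\epsilon n$), which takes $o(n)$ rounds because the target is only $\epsilon n$ successes — then run a ``growth'' Phase 1b where $u_t$ is always taken in the unique large component and a failure only occurs when $v_t$ also lands in the large component; as long as the large component has size $\le(1-\delta)n$, failures occur with probability $\le 1-\delta$, and once its size exceeds $(1-\delta)n$ we switch to the leftover-vertex rule of Phase 2 above, with $\delta\to 0$ chosen slowly. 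Tracking the two regimes carefully and applying standard concentration (Chernoff for sums of independent indicators/geometrics, or Azuma on the component-count supermartingale) is routine once the phase structure is fixed; I expect no deep difficulty beyond bookkeeping the $o(n)$ error terms.
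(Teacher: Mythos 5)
Your overall architecture (trivial lower bound, then a strategy whose failure rounds must total $o(n)$) is right, but the strategy you commit to in the middle regime does not deliver the required $(1+o(1))n$ bound, and this is exactly where the content of the lemma lies. In a round where your chosen vertex lies in a component $C$, the failure probability is $|C|/n$. Your Phase 1b rule places the chosen vertex in the \emph{giant} component, so while the giant has size between $\epsilon n$ and $(1-\delta)n$ every round fails with probability bounded away from $0$. Moreover, since your Phase 1 only spends about $\epsilon n$ successful rounds, at the start of Phase 1b all but $O(\epsilon n)$ vertices are still singleton components, so Phase 1b must supply $\Theta(n)$ successes one vertex at a time. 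The expected number of failures there is roughly $\sum_k \frac{n-k}{k}$ over the leftover counts $k$ running from $(1-O(\epsilon))n$ down to $\delta n$, which is $\Theta(n)$ (of order $n\ln(1/\delta)$), not $o(n)$; your own bound ``failures occur with probability $\le 1-\delta$'' only yields $O(n/\delta)$ rounds. So as written the argument proves $\tau_{\mathcal{C}_1}=O(1)$ but not $\tau_{\mathcal{C}_1}=1$. (Also, the claim that Phase 1 takes $o(n)$ rounds because ``the target is only $\epsilon n$ successes'' is false for constant $\epsilon$, and note the paper's convention is the reverse of yours: $u_t$ is the random vertex and $v_t$ the player's choice.)

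The fix — and the paper's route — is to always put the player's vertex in a \emph{smallest} component, which is the rule you only adopt at the very end (your Phase 2). If $c$ components remain, the smallest has at most $n/c$ vertices, so the failure probability is at most $1/c$. Splitting the process into dyadic phases according to the number of components (phase $i$: component count drops from $n/2^{i-1}$ to $n/2^i$), the failure probability in phase $i$ is at most $2^i/n$ while only $n/2^i$ successes are needed, giving $O(1)$ expected failures per phase, $O(\log n)$ in expectation overall, and a.a.s.\ $O(\log^2 n)$ failures by Markov; hence $n-1+o(n)$ rounds suffice. Your Phase 2 calculation is essentially this idea restricted to the last $n/\log n$ vertices (with the minor imprecision that a round fails when the random vertex lands in the same component as $w$, not only when it equals $w$); applying it from the very beginning, with the component-count bookkeeping above, closes the gap.
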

\begin{proof}
    It is obvious that $\tau_{\mathcal{C}_1} \geq 1$, since a connected graph on $[n]$ has at least $n-1$ edges. 

    Recall that $u_t$ is the vertex uniformly chosen from $[n]$, and $v_t$ is the vertex strategically chosen by the player. 
    For the upper bound, we consider a strategy $\mathcal{S}$ which chooses $v_t$ u.a.r.\ from the smallest component (if there is a tie, pick an arbitrary smallest component). If $u_t$ lands in a different component, we add edge $u_tv_t$. Otherwise we consider the round a failure round. Each successfully added edge then decreases the number of components in the graph by one. We analyse the process with this strategy in a number of phases.

    Let phase $i$ be defined as the rounds in which the number of components in the graph decreases from $\frac{n}{2^{i-1}}$ to $\frac{n}{2^i}$. Thus, there are $\log_2 n$ such phases. We note that phase $i$ consists of $\frac{n}{2^{i-1}} - \frac{n}{2^i} = \frac{n}{2^i}$ non-failure rounds, and a number of failure rounds. Let $T_i$ be the total number of rounds in phase $i$, and let $f_i$ be the number of failure rounds in phase $i$. Thus, $T_i = \frac{n}{2^{i}} + f_i$.
    
    Next, we observe that the smallest component  in any round in phase $i$ contains at most $2^i$ vertices. The probability that a round is a failure round in phase $i$ is thus at most $2^i /n$. Couple the process with the following experiment; consider a sequence of i.i.d.\ Bernoulli random variables with success probability $1 - 2^i/n$. We terminate the sequence once we have observed $n/2^i$ successes. Let $\mathcal{T}_i$ denote the random variable corresponding to the number of Bernoulli random variables in the sequence before it terminates. We observe that $T_i$ is stochastically dominated by $\mathcal{T}_i$. By the negative binomial distribution, it follows that $\mathbb{E}[\mathcal{T}_i] = (n/2^i)/(1-2^i/n)$. Hence, $\mathbb{E}[T_i] \le \mathbb{E}[\mathcal{T}_i] \leq (n/2^i)/(1-2^i/n)$. Then, as $T_i = \frac{n}{2^{i}} + f_i$, we find that for all $i \leq \log_2(n) - 1$:
    \[\mathbb{E}[f_i] \leq \frac{\frac{n}{2^i}}{1-\frac{2^i}{n}} - \frac{n}{2^i} = 1+O\left(\frac{2^i}{n}\right)=O(1). 
    \]
    For the last phase (i.e.\ $\log_2(n)-1<i\le \log_2 n $) only a single successful round is needed, and as the failure probability is at most $1/2$, it follows that for all $i$ it holds that $\mathbb{E}[f_i] = O(1)$. Therefore, $\mathbb{E}[\sum_{i\le \log_2 n}f_i]=O(\log_2 n)$, and thus By Markov's inequality, a.a.s.\ $\sum_{i\le \log_2 n}f_i = O(\log^2 n)$.
    Hence, total number of rounds needed to ensure the graph is connected is a.a.s.\ at most 
    \[\sum_{i > 0}T_i = n-1 + \sum_{i = 1}^{\log_2 n} f_i = (1 + o(1))n. \]
    Therefore, $\tau_{\mathcal{C}_1} \leq 1$ in the pre-positional process, as desired.
\end{proof}

Thus, asymptotically, the number of required rounds to ensure connectivity is equal between the pre- and post-positional processes.

In this section we prove tight asymptotic bounds for the final open case in both the pre- and post-positional processes, $k = 2$. The best bound previously known for the post-positional process, as observed by Ben-Eliezer, Hefetz, Kronenberg, Parczyk, Shikhelman and Stojakovi{\'{c}}~\cite{Ben-Eliezer2020Semi-randomProcess}, is the tight upper bound for $k = 3$. That is, $\tau_{\mathcal{C}_2} \leq \tau_{\mathcal{C}_3}$. They also gave a lower bound, based on the $2$-min process. The $2$-min process aims to ensure that each vertex has degree at least two as fast as possible, a prerequisite for $2$-connectedness. Using a known result by Wormald~\cite{Wormald1995DifferentialGraphs, Wormald1999TheAlgorithms} on the min-degree process, they showed that the $2$-min process a.a.s.\ takes $(\ln 2 + \ln(\ln 2+1)) + o(1))n$ rounds to complete. Hence, $\tau_{\mathcal{C}_2} \geq \ln 2 + \ln(\ln 2+1)$ in the post-positional process. Note that as the $2$-min process can be modelled by the pre-positional process as well, it similarly holds that $\tau_{\mathcal{C}_2} \geq \ln 2 + \ln(\ln 2+1)$ in the pre-positional process.

In this section we show a novel upper bound for the pre-positional process, which asymptotically matches the known lower bound. Note that by Lemma~\ref{lem:relation_pre_post_positional}, this directly gives an asymptotically tight upper bound for the post-positional process as well.

\begin{lemma}
    \label{thm:2-connectedness}
    $\tau_{\mathcal{C}_2} = \ln 2 + \ln(\ln 2+1)$ in both the pre- and post-positional processes.
\end{lemma}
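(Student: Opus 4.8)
The lower bound $\tau_{\mathcal{C}_2}\ge \ln 2+\ln(\ln 2+1)$ is already established (via the $2$-min process, which can be modelled in the pre-positional process), so by Lemma~\ref{lem:relation_pre_post_positional} it suffices to exhibit a pre-positional strategy that a.a.s.\ builds a $2$-connected graph in $(\ln 2+\ln(\ln 2+1)+o(1))n$ rounds. The plan is to run in two stages. In the first stage, which consumes $(\ln 2+\ln(\ln 2+1)+o(1))n$ rounds, we run (a multigraph-friendly variant of) the pre-positional $2$-min process: repeatedly pick $u_t$ u.a.r.\ among the vertices of smallest current degree, pick $v_t$ to be a uniformly random vertex (say, among all vertices, or among those not yet adjacent to $u_t$), declaring a round a failure when $u_t=v_t$ or when the chosen edge is already present. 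As noted in the excerpt, this models the min-degree process with $o(n)$ failure rounds, so a.a.s.\ after $(\ln 2+\ln(\ln 2+1)+o(1))n$ rounds we obtain a graph $G$ with $\delta(G)\ge 2$ whose distribution is contiguous to the min-degree process output at minimum degree $2$. The second stage is a correction stage of only $o(n)$ rounds that ``patches up'' $G$ into a $2$-connected graph.

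The structural heart of the argument is to understand how far $G$ (the min-degree-$2$ graph) is from being $2$-connected. Following Kang--Koh--Ree--{\L}uczak~\cite{Kang2006TheProcess}, one should show that a.a.s.\ $G$ has very few ``defects'': its $2$-core (or the graph itself, since $\delta\ge 2$) has only a bounded number — in fact a.a.s.\ $O(1)$, or at worst $o(n/\log n)$ — of cut vertices and small components, and more precisely that the block-cut-tree of $G$ has only a small number of leaves/blocks, and that the vertices lying in ``fragile'' positions (degree-$2$ vertices on short paths, cut vertices) number $o(n)$. This is exactly the place where $k=2$ is genuinely harder than $k\ge 3$: for $k\ge 3$ the min-degree process output is already $k$-connected a.a.s., whereas for $k=2$ it is only connected a.a.s.\ (indeed Kang et al.\ show the $2$-min process graph is a.a.s.\ connected), so there will typically be $\Theta(\log n)$ or more cut vertices to eliminate. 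I expect this analysis — quantifying the number and local structure of cut vertices / $2$-edge-cuts in the min-degree-$2$ random multigraph — to be the main obstacle, and it will likely require either citing fine results from~\cite{Kang2006TheProcess} or redoing a first-moment computation over potential small separators.

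For the correction stage, suppose we have identified a set $B$ of $o(n)$ vertices that suffices to ``certify'' the failure of $2$-connectivity — e.g.\ all cut vertices together with one representative vertex from each leaf block of the block-cut-tree. To fix a cut vertex $c$ separating the graph into pieces $A_1,\dots,A_r$, it suffices to add, for each consecutive pair $(A_i,A_{i+1})$, an edge between a non-$c$ vertex of $A_i$ and a non-$c$ vertex of $A_{i+1}$; doing this around the block-cut-tree eliminates all cut vertices. In the pre-positional process we cannot place both endpoints, but we can do the following: mark a suitable ``target set'' $T$ of the appropriate vertices, set $v_t$ to be our intended correction endpoint, and wait for $u_t$ to land in the complementary part. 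Since $|T|$ and the number of corrections needed are $o(n)$ but $\omega(1)$ — say $\Theta(\log n)$ — and each correction succeeds with probability $\Theta(|A_i|/n)$ per round, a standard coupling with geometric random variables (exactly as in the proof of Lemma~\ref{lem:pre-positional_1-connectedness}) shows the total number of rounds for all corrections is a.a.s.\ $o(n)$; one must check the pieces $A_i$ are not too small (if some $A_i$ has only constant size the waiting time per correction is $\Theta(n)$, so one needs the a.a.s.\ structural fact that all the relevant parts have size $\omega(\log n)$, or handle the $O(1)$ tiny parts separately by a direct argument). Combining the two stages gives $\tau_{\mathcal{C}_2}\le \ln 2+\ln(\ln 2+1)$ in the pre-positional process, and with the matching lower bound and Lemma~\ref{lem:relation_pre_post_positional} the equality holds in both processes.
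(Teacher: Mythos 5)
Your high-level plan (lower bound via the $2$-min process, then run the $2$-min process and patch with $o(n)$ extra rounds) matches the paper's, but the step you yourself flag as ``the main obstacle'' is a genuine gap, and the facts you hope to lean on are not available. First, Kang, Koh, Ree and {\L}uczak~\cite{Kang2006TheProcess} prove that the $k$-min graph is a.a.s.\ connected only for $k\ge 3$; for $k=2$ the resulting graph is \emph{not} known (or claimed) to be a.a.s.\ connected, which is precisely why the case $k=2$ was open. Second, your correction stage repairs cut vertices one at a time, so it runs in $o(n)$ rounds only if the $2$-min graph a.a.s.\ has $o(n)$ (you suggest $O(1)$ or $o(n/\log n)$) cut vertices and leaf blocks. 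No such bound is cited or proved, and it is not clear it holds: with minimum degree $2$ the graph can contain many bridges and $K_2$-blocks, and the paper deliberately never bounds their number. Without that structural input your geometric-waiting-time argument only gives a bound of order (number of cut vertices), which need not be $o(n)$.

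The paper's proof works around exactly this point. It proves (Proposition~\ref{prop:2-connectedness_small_blocks}, via a first-moment computation organised along the subphases of the $2$-min process) that a.a.s.\ only $o(n)$ vertices lie in $2$-connected subgraphs of order at most $\sqrt{\ln n}$, hence there are only $o(n)$ blocks of size at least $3$ (Corollary~\ref{cor:2-connectedness_number_of_components}); since every leaf of the reduced block tree is such a block (Corollary~\ref{col:reduced_block_tree_leaves}), the tree has $o(n)$ leaves even though the number of cut vertices and $K_2$-blocks is never controlled. Connectivity is then \emph{created} in $o(n)$ rounds (Proposition~\ref{prop:2-connectedness_connect}), not inherited from the $2$-min process, and the final phase adds one semi-random edge per leaf block -- either towards a block of size at least $n/4$, or across a balanced red/blue two-colouring of the reduced block tree (Proposition~\ref{prop:reduced_block_tree_balanced}) -- so that each added edge $2$-connects an entire path of blocks at once; a short clean-up then absorbs the remaining cut vertices that separate single blocks. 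To repair your proposal you would either have to prove the missing bound on the number of cut vertices of the $2$-min graph, or replace the per-cut-vertex patching by an augmentation scheme of the above type whose cost is governed by the number of leaf blocks rather than the number of cut vertices.
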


That is, the minimum number of rounds required for a semi-random process to build a $2$-connected graph on $n$ vertices is asymptotic to $(\ln 2 + \ln(\ln 2+1))n$ in both processes.

As a result of Lemma~\ref{thm:2-connectedness}, and the previous analysis of existing proofs for bounds on $\tau_{\mathcal{C}_k}$ for $k \geq 1$, it follows that the property of $k$-connectedness requires asymptotically the same number of rounds in the pre- and post-positional processes. 

\subsubsection{Overview}
For the upper bound, our approach differs significantly from the strategy used by Ben-Eliezer, Hefetz, Kronenberg, Parczyk, Shikhelman and Stojakovi{\'{c}}~\cite{Ben-Eliezer2020Semi-randomProcess} to prove the tight upper bounds for $k$-connectedness for $k \geq 3$. Namely, while their approach is predominantly probabilistic, we use a more structural approach. Our strategy is based on analysing the structure of the maximal $2$-connected components of the graph resulting from the $2$-min process.

In the first phase, we use the $2$-min process to obtain a graph in which each vertex has degree at least $2$. We show that a.a.s.\ most of the vertices in this graph will be contained in relatively large $2$-connected subgraphs. This moreover allows us to conclude that the graph contains $o(n)$ maximal $2$-connected subgraphs. 

In the second phase, the aim is to ensure that the graph becomes connected. We bound the number of components by the number of maximal $2$-connected subgraphs, recalling that the graph has $o(n)$ such subgraphs after the first phase. As such, by adding edges between components, we can quickly ensure the graph becomes connected.

In the third phase, we then want to make the graph $2$-connected. We achieve this by considering a tree structure on the maximal $2$-connected subgraphs, and showing that by balancing this tree, we can efficiently eliminate cut-vertices.

We show that the second and third phases both take $o(n)$ steps a.a.s. Therefore, the first phase, consisting of the $2$-min process, dominates the total number of rounds in the process of building a $2$-connected graph on $[n]$. 

In Section~\ref{subsection:supporting_structural_results}, we first introduce the purely structural definitions and results we will use. Section~\ref{subsection:proof_upper_bound_k-connectedness} then builds upon these structural results to analyse the random process given by our strategy.

\subsubsection{Supporting structural results}
\label{subsection:supporting_structural_results}
In this section we restate the conventional definitions of blocks and block graphs (see for instance~\cite{Konig1936TheorieStreckenkomplexe}).

\begin{definition}[Block]
	\label{def:block}
	Let $B \subseteq V(G)$ be a maximal set of vertices such that for any two vertices $x, y \in B$ with $xy \not \in E(G)$, in order to separate vertex $x$ from vertex $y$, it is necessary to remove at least $2$ vertices from $G$. Then $B$ is called a block.
\end{definition}

Note that by this definition, each block in a graph either induces a maximal $2$-connected subgraph, an edge, or an isolated vertex. Moreover, when considering connected graphs on at least $2$ vertices, each block thus induces a maximal $2$-connected subgraph or an edge. Based on this definition, we can then decompose a graph $G$ into such blocks.

\begin{definition}[Block decomposition]
	\label{def:block_decomposition}
	Let $\mathcal{B}(G) \subseteq \mathcal{P}(V(G))$ denote the set of all blocks of graph $G$. Then $\mathcal{B}(G)$ is called the block decomposition of graph $G$.
\end{definition}

We observe that by the definition of blocks, for each edge $uv \in E(G)$ in a graph $G$ there exists a unique block $B \in \mathcal{B}(G)$ such that $u,v \in B$. Moreover, by the maximality of the blocks, the block decomposition $\mathcal{B}(G)$ is unique. Note that $\mathcal{B}(G)$ is generally not a partition of $V(G)$. However, each pair of blocks shares at most one vertex, as given in the following proposition.

\begin{proposition}[{{}K{\H{o}}nig, \cite[Theorem~XIV.7]{Konig1936TheorieStreckenkomplexe}}]
	\label{prop:max_intersect_block}
	Let $G$ be a graph. Then, for each pair of blocks $B_1, B_2 \in \mathcal{B}(G)$, it holds that $|B_1 \cap B_2| \leq 1$.
\end{proposition}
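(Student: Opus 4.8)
The plan is to prove the contrapositive: if two blocks $B_1, B_2 \in \mathcal{B}(G)$ satisfy $|B_1 \cap B_2| \geq 2$, then $B_1 = B_2$. The heart of the argument is the claim that whenever two blocks share at least two vertices, their union $B_1 \cup B_2$ again satisfies the defining property of a block (Definition~\ref{def:block}). Once this claim is established, the proposition follows immediately: since $B_1 \cup B_2 \supseteq B_1$ and $B_1$ is a \emph{maximal} set with the block property, we must have $B_1 \cup B_2 = B_1$, hence $B_2 \subseteq B_1$, and then maximality of $B_2$ forces $B_1 = B_2$.

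To prove the claim, I would fix distinct $x, y \in B_1 \cap B_2$ and take arbitrary $p, q \in B_1 \cup B_2$ with $pq \notin E(G)$, aiming to show that no single vertex of $G$ separates $p$ from $q$. If $p$ and $q$ both lie in $B_1$, or both lie in $B_2$, this is immediate from the block property of the relevant block. The only remaining case, up to symmetry, is $p \in B_1 \setminus B_2$ and $q \in B_2 \setminus B_1$, and here necessarily $p, q \notin \{x, y\}$. Suppose for contradiction that some vertex $w$ separates $p$ from $q$, so $w \notin \{p, q\}$ and $p, q$ lie in different components of $G - w$. Since $x \neq y$, at least one of them — say $x$ — differs from $w$. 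Now $p$ and $x$ both lie in $B_1$ and $w \notin \{p, x\}$: if $px \in E(G)$ this edge survives in $G - w$; otherwise the block property of $B_1$ says $\{w\}$ does not separate $p$ from $x$. Either way $p$ and $x$ lie in the same component of $G - w$. Applying the same reasoning to $q$ and $x$ inside $B_2$ shows $q$ and $x$ lie in the same component of $G - w$. Concatenating the two connections through $x$, we conclude $p$ and $q$ lie in the same component of $G - w$, a contradiction.

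The argument is essentially routine, and I expect the only delicate points to be the degenerate coincidences: one must ensure the hypothetical separator $w$ is distinct from both endpoints \emph{and} from the shared vertex $x$ that is used to route the path (which is exactly why having two distinct common vertices $x,y$ is needed), and one must treat the case $px \in E(G)$ separately, since the block property is only assumed for non-adjacent pairs. An alternative would be to phrase everything through the cycle/ear characterization of $2$-connectivity or Menger's theorem, but the direct "route a path through a common vertex of the two blocks" argument above keeps the proof self-contained, using nothing beyond Definition~\ref{def:block} and the maximality built into it.
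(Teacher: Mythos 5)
Your proof is correct and takes essentially the same route as the paper's own (commented-out) argument for this classical fact of K\H{o}nig: assuming two blocks share two vertices, you show their union again satisfies the block property by routing any allegedly separated pair through a shared vertex avoiding the separator, and then contradict maximality. The only cosmetic difference is that you argue directly from Definition~\ref{def:block}, treating adjacent pairs and size-$2$ blocks uniformly, whereas the paper's sketch passes through $2$-connectedness of the induced subgraphs; both are sound.
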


\begin{definition}[Block graph]
	\label{def:block_graph} 
	Let $G$ be a graph. Then let $G_{\mathcal{B}}$ be the graph defined by $V(G_{\mathcal{B}}) = \mathcal{B}(G)$ and $E(G_{\mathcal{B}}) = \{B_1B_2 \, | \, B_1 \cap B_2 \neq \emptyset\}$.  Then graph $G_{\mathcal{B}}$ is called the block graph of graph $G$.
\end{definition}

For a graph $G$ to be $2$-connected, it must hold that $\mathcal{B}(G) = \{V(G)\}$. We aim to use the blocks and their relative structure in a graph to identify moves in a semi-random process which join multiple blocks together into a single larger block. If a semi-random edge $u_tv_t$ joins two blocks then we call the addition of such an edge an \emph{augmentation}. A natural augmentation to consider is to join two blocks $B_i$ and $B_j$ where there is a path between $B_i$ and $B_j$ in $G_{{\cal B}}$. If $u_t$ and $v_t$ are not themselves cut-vertices, this augmentation will immediately join all blocks along the path into a single block. To that purpose, we want to consider a tree structure on the blocks.

The traditional such structure, called the block-cut tree of a graph, was originally introduced independently by Gallai~\cite{Gallai1964ElementareGraphen}, and Harary and Prins~\cite{Harary1966TheGraph}.

\begin{definition}[Block-cut tree]
    \label{def:block-cut_tree}
    Let $G$ be a connected graph, and let $S$ be the set of cut vertices of graph $G$. Then, the graph $T$, given by $V(T) = \mathcal{B}(G) \cup S$ and $E(T) = \{vB \, | \, v \in S, B \in \mathcal{B}(G), v \in B\}$, is a tree and called the block-cut tree of graph $G$.
\end{definition}

We consider a structure similar to the block-cut tree, based on the block graph. Instead of including the cut-vertices in the tree, we take a spanning tree on the block graph. This ensures that we only have to work with blocks, while still providing the desired tree structure. To that aim, we introduce the following definition.

\begin{definition}[Reduced block tree]
    \label{def:reduced_block_tree}
    Let $G_{\mathcal{B}}$ be the block graph of a connected graph $G$. Then, a spanning tree $T_{\mathcal{B}}$ of graph $G_{\mathcal{B}}$ is called a reduced block tree of graph $G$.
\end{definition}

A reduced block tree can equivalently be constructed recursively. Let $v \in V(G)$ be a cut-vertex in a connected graph $G$, and let $G_1$ and $G_2$ be the induced subgraphs of graph $G$ such that $V(G_1) \cup V(G_2) = V(G)$, $E(G_1) \cup E(G_2) = E(G)$, and $V(G_1) \cap V(G_2) = \{v\}$. We note that as vertex $v$ is a cut-vertex, each block $B \in \mathcal{B}(G)$ is contained in either graph $G_1$ or graph $G_2$. Therefore, $\mathcal{B}(G_1) \cup \mathcal{B}(G_2) = \mathcal{B}(G)$.  Let $T_{\mathcal{B}_1}$ and $T_{\mathcal{B}_2}$ be reduced block trees for graphs $G_1$ and $G_2$ respectively. Then, we can construct a reduced block tree for graph $G$ with block decomposition $\mathcal{B}(G)$ by joining trees $T_{\mathcal{B}_1}$ and $T_{\mathcal{B}_2}$ with a single edge from a vertex in $T_{\mathcal{B}_1}$ representing a block containing vertex $v$ to a vertex in $T_{\mathcal{B}_2}$ also representing a block containing vertex $v$. We observe that by Definition~\ref{def:reduced_block_tree}, the reduced block tree of a graph is generally not unique. This occurs when a vertex is contained in at least three blocks, and the block graph thus contains a clique of size at least $3$.

\begin{proposition}
    \label{prop:reduced_block_tree_subtrees}
    Let $T_{\mathcal{B}}$ be a reduced block tree of a connected graph $G$. For $v \in V(G)$, the set $\{B \in V(T_{\mathcal{B}}) \, | \, v \in B\}$ induces a (connected) subtree in $T_{\mathcal{B}}$.
\end{proposition}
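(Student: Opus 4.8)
The plan is to prove the statement by induction on the number of blocks of $G$, using the recursive construction of a reduced block tree given just before the proposition. The base case is when $G$ has a single block, so $T_{\mathcal{B}}$ is a single vertex and every subset of $V(T_{\mathcal{B}})$ is trivially connected. For the inductive step, suppose $G$ has at least two blocks; then $G$ has a cut-vertex, say $w$, and we may split $G$ as $G_1, G_2$ with $V(G_1)\cap V(G_2)=\{w\}$, $V(G_1)\cup V(G_2)=V(G)$, $E(G_1)\cup E(G_2)=E(G)$, where each $G_i$ has strictly fewer blocks than $G$. By the recursive construction, $T_{\mathcal{B}}$ is obtained by joining reduced block trees $T_{\mathcal{B}_1}$ of $G_1$ and $T_{\mathcal{B}_2}$ of $G_2$ by a single edge $B_1'B_2'$ with $w\in B_1'\cap B_2'$, $B_i'\in \mathcal{B}(G_i)$.

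Now fix $v\in V(G)$ and consider $X_v:=\{B\in V(T_{\mathcal{B}})\mid v\in B\}$. First I would dispose of the case $v\ne w$: then $v$ lies in exactly one of $G_1,G_2$ (if $v\in V(G_1)\cap V(G_2)$ then $v=w$), say $v\in V(G_1)\setminus\{w\}$, and since every block containing $v$ is a block of $G_1$ — here I invoke the fact recorded in the recursive discussion that each block of $G$ is a block of $G_1$ or of $G_2$, and a block containing the non-cut-vertex $v$ of the split must lie in $G_1$ — we get $X_v=\{B\in V(T_{\mathcal{B}_1})\mid v\in B\}$, which is a subtree of $T_{\mathcal{B}_1}$ by the inductive hypothesis, hence a subtree of $T_{\mathcal{B}}$ since $T_{\mathcal{B}_1}$ is an induced subtree of $T_{\mathcal{B}}$. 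The remaining case is $v=w$: then $X_w=X_w^{(1)}\cup X_w^{(2)}$ where $X_w^{(i)}=\{B\in V(T_{\mathcal{B}_i})\mid w\in B\}$, each of which is a (nonempty, since $B_i'\in X_w^{(i)}$) subtree of $T_{\mathcal{B}_i}$ by induction. Since $T_{\mathcal{B}}$ is the disjoint union of $T_{\mathcal{B}_1}$ and $T_{\mathcal{B}_2}$ plus the edge $B_1'B_2'$ whose endpoints lie in $X_w^{(1)}$ and $X_w^{(2)}$ respectively, the union $X_w$ is connected in $T_{\mathcal{B}}$, and being a union of two subtrees joined by an edge it induces a subtree.

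The main obstacle I anticipate is purely bookkeeping: one must be careful that the recursive construction quantifies over \emph{all} reduced block trees (the proposition is stated for an arbitrary reduced block tree, which need not be the one produced by the recursion), so I would first note that every reduced block tree of $G$ arises from this recursion — given any spanning tree $T_{\mathcal{B}}$ of $G_{\mathcal{B}}$ and any cut-vertex $w$, deleting from $T_{\mathcal{B}}$ the (necessarily unique, by Proposition \ref{prop:max_intersect_block} applied along a path) edge separating the $G_1$-blocks from the $G_2$-blocks exhibits $T_{\mathcal{B}}$ in the recursive form — or alternatively just phrase the whole argument intrinsically: take any two blocks $B, B'\in X_v$ and show the unique $T_{\mathcal{B}}$-path between them stays in $X_v$, by observing that if some block $B''$ on that path did not contain $v$, then $B''$ together with the vertices $v$ contributes a separation in $G_{\mathcal{B}}$ — equivalently, a cut-vertex other than through $v$ — contradicting that $T_{\mathcal{B}}$ is a tree on $G_{\mathcal{B}}$ and that any two consecutive blocks on the path share a vertex while $v$ lies on both ends. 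I would present the inductive version as the cleaner writeup.
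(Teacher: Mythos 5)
Your main (inductive) route has a genuine gap at exactly the point you flag: the proposition is about an \emph{arbitrary} spanning tree of $G_{\mathcal{B}}$, and your induction only covers trees produced by the recursive construction. The bridge you propose — ``for any cut-vertex $w$, the edge of $T_{\mathcal{B}}$ separating the $G_1$-blocks from the $G_2$-blocks is necessarily unique'' — is false. Take $G$ to be three triangles $B_1,B_2,B_3$ sharing a single vertex $w$; then $G_{\mathcal{B}}$ is a triangle, and the spanning tree $B_2-B_1-B_3$ has \emph{two} edges crossing the split $G_1=G[B_1]$, $G_2=G[B_2\cup B_3]$, so its restriction to $\mathcal{B}(G_2)$ is not a spanning tree of $G_2$'s block graph and the recursion does not apply with that split. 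One can repair this (e.g.\ peel off a leaf $B$ of $T_{\mathcal{B}}$ and argue that $B$ contains exactly one cut-vertex of $G$, so that $\{B\}$ versus the rest is a valid split), but proving that a leaf of an arbitrary spanning tree of $G_{\mathcal{B}}$ contains only one cut-vertex already requires the cycle-through-two-blocks argument that is the real content of the statement — so the induction does not avoid the hard step, it hides it.

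Your fallback ``intrinsic'' argument is in spirit the paper's proof, but as written it does not close: the claimed contradiction ``with $T_{\mathcal{B}}$ being a tree on $G_{\mathcal{B}}$'' is not one. The blocks containing $v$ form a clique in $G_{\mathcal{B}}$, yet in a general graph the vertex set of a clique need not induce a connected subgraph of a chosen spanning tree, so tree-ness alone cannot rule out a $T_{\mathcal{B}}$-path between two blocks of $X_v$ passing through blocks avoiding $v$. The contradiction must come from the block structure of $G$ itself: if $B,B'\in X_v$ are joined in $T_{\mathcal{B}}$ by a path whose internal blocks avoid $v$, then (consecutive blocks on the path intersect, so the union $S_P$ of the internal blocks is connected and meets both $B\setminus\{v\}$ and $B'\setminus\{v\}$, using $B\cap B'=\{v\}$ from Proposition~\ref{prop:max_intersect_block}) one builds a cycle through $v$ meeting $B$ and $B'$ in at least two vertices each; since a cycle is $2$-connected it lies in a single block, forcing $B=B'$ by Proposition~\ref{prop:max_intersect_block} — a contradiction. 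That is the step your sketch gestures at but does not supply, and it is exactly how the paper argues.
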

\begin{proof}
    Suppose not. Let $S \subseteq V(T_{\mathcal{B}})$ be the set of all blocks $B \in V(T_{\mathcal{B}})$ such that $v \in B$. Then the set $S$ induces a disconnected subgraph in tree $T_{\mathcal{B}}$. Let $C_1$ and $C_2$ be two components of this induced subgraph $T_{\mathcal{B}}[S]$. Moreover, let $P$ be a shortest path between sets $V(C_1)$ and $V(C_2)$ in $T_{\mathcal{B}}$, and let blocks $B_1, B_2 \in S$ be the endpoints of this path $P$ such that $B_1 \in V(C_1)$ and $B_2 \in V(C_2)$. We note that $P$ has length at least 2. Then, as $P$ is a shortest such path, none of the internal vertices of $P$ are contained in $S$. Hence, the corresponding blocks do not contain vertex $v$. Let $G_P$ be the subgraph of $T_{\mathcal{B}}$ induced by the internal vertices of path $P$. Additionally, let $S_P \subseteq V(G)$ be the set of all vertices of graph $G$ contained in at least one of the blocks in $G_P$.
    
    We observe that by the definition of path $P$, subgraph $G_P$ contains blocks adjacent to blocks $B_1$ and $B_2$, respectively, in the tree $T_{\mathcal{B}}$. Therefore, $B_1 \cap S_P, B_2 \cap S_P \neq \emptyset$. Moreover, by Proposition~\ref{prop:max_intersect_block} we find that $B_1 \cap B_2 = \{v\}$. Therefore, as $v \not \in S_P$, there exist vertices $v_1 \in B_1 \cap S_P$ and $v_2 \in B_2 \cap S_P$. Then, because blocks $B_1$ and $B_2$ are by definition connected, there exists a $v-v_1$ path $P_1$ in block $B_1$ and a $v-v_2$ path $P_2$ in block $B_2$. Similarly, the set $S_P$ induces a connected subgraph in $G$, and thus contains a $v_1-v_2$ path $P'$. We note that the union of the paths $P_1$, $P_2$ and $P'$ gives a subgraph of $G$ containing a cycle $C$ containing vertex $v$. We note that the cycle $C$ is $2$-connected and hence is contained in a block $B_C$. Moreover, as this cycle contains at least $2$ vertices of block $B_1$, by Proposition~\ref{prop:max_intersect_block}, we find that $B_1 = B_C$. Analogously, it follows that $B_2 = B_C$. However, this implies that $B_1 = B_2$, contradicting these blocks being in different components $C_1$ and $C_2$. By this contradiction, we conclude that the proposition holds.
\end{proof}

\begin{proposition}
    \label{prop:reduced_block_tree_k2}
    Let $T_{\mathcal{B}}$ be a reduced block tree of a connected graph $G$ with $\delta(G)\geq 2$. Let $B \in \mathcal{B}$ be a block such that $B = \{u, v\}$. Then there exist distinct blocks $B_u, B_v \in \mathcal{B}$ adjacent to $B$ in $T_{\mathcal{B}}$ such that $u \in B_u$ and $v \in B_v$.
\end{proposition}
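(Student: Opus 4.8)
The plan is to deduce this directly from the two structural facts already in hand: Proposition~\ref{prop:max_intersect_block} (distinct blocks share at most one vertex) and Proposition~\ref{prop:reduced_block_tree_subtrees} (for every vertex $w$, the blocks containing $w$ induce a connected subtree of $T_{\mathcal{B}}$). The only place the hypothesis $\delta(G)\ge 2$ is needed is to guarantee that $u$ and $v$ each lie in at least two blocks, and this is the one point deserving a little care.

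First I would record that since $B=\{u,v\}$ is a block on two vertices, it induces the single edge $uv$, so within $B$ the vertices $u$ and $v$ each have degree $1$. Because $\delta(G)\ge 2$, vertex $u$ is incident in $G$ to some edge other than $uv$; that edge lies in a (unique) block, which is not $B$ and which contains $u$. Hence $u$ lies in at least two blocks, and by the same argument so does $v$.

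Next I would set $T_u$ (respectively $T_v$) to be the subtree of $T_{\mathcal{B}}$ induced by the blocks containing $u$ (respectively $v$); these are connected subtrees by Proposition~\ref{prop:reduced_block_tree_subtrees}, each contains the vertex $B$ of $T_{\mathcal{B}}$, and by the previous step each has at least two vertices. A connected subtree with at least two vertices containing $B$ has an edge of $T_{\mathcal{B}}$ incident to $B$ and staying inside it; so there is a block $B_u$ adjacent to $B$ in $T_{\mathcal{B}}$ with $u\in B_u$ and $B_u\ne B$, and symmetrically a block $B_v$ adjacent to $B$ in $T_{\mathcal{B}}$ with $v\in B_v$ and $B_v\ne B$.

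Finally I would verify $B_u\ne B_v$. Any block containing both $u$ and $v$ meets $B$ in at least these two vertices, so by Proposition~\ref{prop:max_intersect_block} it must be $B$ itself; thus $V(T_u)\cap V(T_v)=\{B\}$. Since $B_u\in V(T_u)\setminus\{B\}$ and $B_v\in V(T_v)\setminus\{B\}$, they are distinct, completing the argument. I do not anticipate any real obstacle beyond being careful that ``$u$ and $v$ lie in at least two blocks'' is exactly what makes $T_u$ and $T_v$ large enough to supply the neighbours $B_u,B_v$ of $B$.
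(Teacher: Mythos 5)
Your proposal is correct and follows essentially the same route as the paper: use $\delta(G)\ge 2$ to find a second block through each of $u$ and $v$, apply Proposition~\ref{prop:reduced_block_tree_subtrees} to obtain neighbours $B_u,B_v$ of $B$ in $T_{\mathcal{B}}$ containing $u$ and $v$ respectively, and then rule out $B_u=B_v$ because no block other than $B$ can contain both $u$ and $v$. The only cosmetic difference is that you invoke Proposition~\ref{prop:max_intersect_block} for this last step where the paper appeals to maximality of $B$, which is the same underlying fact.
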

\begin{proof}
    Because $\delta(G) \geq 2$, there exists another edge $uw \in E(G)$. Hence, as each edge is contained in a block, there exists a block $B' \in \mathcal{B}$ such that $u \in B'$ and $B' \neq B$. It then follows from Proposition~\ref{prop:reduced_block_tree_subtrees} that there exists a block $B_u \in \mathcal{B}$ such that $u \in B_u$ and $B_u$ adjacent to $B$ in $T_{\mathcal{B}}$. Analogously, there exists a block $B_v \in \mathcal{B}$ adjacent to $B$ in $T_{\mathcal{B}}$ such that $v \in B_v$. By the maximality of block $B$, it follows that $v \not \in B_u$ and $u \not \in B_v$. Hence, $B_u \neq B_v$, as desired.
\end{proof}

\begin{corollary}
    \label{col:reduced_block_tree_leaves}
    Let $T_{\mathcal{B}}$ be a reduced block tree of a connected graph $G$ with $\delta(G) \geq 2$. Then each leaf in $T_{\mathcal{B}}$ corresponds to a $2$-connected block in graph $G$ of at least $3$ vertices.
\end{corollary}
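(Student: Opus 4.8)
The plan is to argue by contradiction, extracting the statement almost immediately from Proposition~\ref{prop:reduced_block_tree_k2}. First I would record the relevant trichotomy of block types: since $G$ is connected and $\delta(G) \geq 2$, it has at least three vertices, and by the remark following Definition~\ref{def:block} every block of $G$ is either an edge-block (a block of the form $\{u,v\}$) or a maximal $2$-connected subgraph on at least three vertices; in particular no block of $G$ is an isolated vertex. Consequently, to prove the corollary it suffices to show that no leaf of $T_{\mathcal{B}}$ is an edge-block, since any block that is not an edge-block is automatically a $2$-connected block on at least three vertices.

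So suppose for contradiction that some leaf $B$ of $T_{\mathcal{B}}$ is an edge-block, say $B = \{u, v\}$. Applying Proposition~\ref{prop:reduced_block_tree_k2} to $B$ (valid since $G$ is connected with $\delta(G) \geq 2$), we obtain two \emph{distinct} blocks $B_u, B_v \in \mathcal{B}(G)$, each adjacent to $B$ in $T_{\mathcal{B}}$, with $u \in B_u$ and $v \in B_v$. But a leaf of a tree has exactly one neighbour, so $B$ cannot be adjacent to two distinct vertices of $T_{\mathcal{B}}$, a contradiction. Hence every leaf of $T_{\mathcal{B}}$ is a non-edge block, i.e.\ a $2$-connected block on at least three vertices.

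The only points needing a word of care are degenerate cases: if $G$ is itself $2$-connected then $\mathcal{B}(G) = \{V(G)\}$ and $T_{\mathcal{B}}$ is a single vertex, which is $V(G)$, a $2$-connected subgraph on at least three vertices, so the claim holds trivially (or vacuously, under the convention that a one-vertex tree has no leaves). I do not expect any genuine obstacle here: the corollary is essentially a repackaging of Proposition~\ref{prop:reduced_block_tree_k2}, and the work has already been done in proving that proposition (and Proposition~\ref{prop:reduced_block_tree_subtrees} before it).
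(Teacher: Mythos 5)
Your proof is correct and follows essentially the same route as the paper: Proposition~\ref{prop:reduced_block_tree_k2} gives any size-$2$ block two distinct neighbours in $T_{\mathcal{B}}$, so no leaf can be an edge-block, and the definition of blocks (with $G$ connected and $\delta(G)\geq 2$ excluding isolated-vertex blocks) then forces each leaf to be a $2$-connected block on at least three vertices. Your explicit treatment of the degenerate single-block case is a harmless addition; no gaps.
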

\begin{proof}
    By Proposition~\ref{prop:reduced_block_tree_k2}, blocks of size $2$ cannot be leaves in $T_{\mathcal{B}}$. Then, by the definition of blocks, the result follows. 
\end{proof}

\begin{proposition}
    \label{prop:reduced_block_tree_balanced}
    Let $G$ be a connected graph such that $|B| < n/4$ for all blocks $B \in \mathcal{B}(G)$, and let $T_{\mathcal{B}}$ be a corresponding reduced block tree. Then there exists a vertex ${B^*} \in V(T_{\mathcal{B}})$ and a colouring $\phi : V(T_{\mathcal{B}}) \setminus \{B^*\} \to \{\text{red$,$ blue}\}$ such that all components of $T_{\mathcal{B}} - {B^*}$ are monochromatic and that for $S_{\text{red}} = \{v \in B \setminus B^* \, | \, B \in \mathcal{B}, \phi(B) = \text{red}\}$ and $S_{\text{blue}} = \{v \in B \setminus B^* \, | \, B \in \mathcal{B}, \phi(B) = \text{blue}\}$ it holds that $|S_{\text{blue}}| \leq |S_{\text{red}}| \leq 3|S_{\text{blue}}|$.
\end{proposition}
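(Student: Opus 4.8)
The plan is to find a centroid-like vertex $B^*$ in the tree $T_{\mathcal B}$ and then two-colour the components of $T_{\mathcal B} - B^*$ greedily so that the two colour classes are roughly balanced in terms of the vertex counts $|S_{\text{red}}|, |S_{\text{blue}}|$. The subtlety, compared to a plain tree-centroid argument, is that the "weight" we care about is not the number of blocks in a component but the number of vertices of $G$ those blocks cover (minus $B^*$), and different blocks can share vertices; this is exactly why the hypothesis $|B| < n/4$ for every block is needed.

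First I would set up the weights. For a vertex $B\in V(T_{\mathcal B})$ and a component $C$ of $T_{\mathcal B}-B$, write $w_B(C)$ for $|\{x \in B' \setminus B : B' \in V(C)\}|$, the number of vertices of $G$ lying in some block of $C$ but not in $B$ itself. By Proposition~\ref{prop:reduced_block_tree_subtrees}, the blocks containing any fixed vertex $x$ form a subtree of $T_{\mathcal B}$, so once we delete $B$, vertex $x$ can be "claimed" by at most one component of $T_{\mathcal B}-B$ (the one containing the part of that subtree remaining after removing $B$), unless $x\in B$ in which case it is excluded. Hence for a fixed $B$ the sets $\{x\in B'\setminus B : B'\in V(C)\}$ over the components $C$ are pairwise disjoint, and $\sum_C w_B(C) \le n$ (in fact $= n - |B|$). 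This disjointness is the key structural fact making the counting behave additively.

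Next I would choose $B^*$ to be a block minimising the maximum component weight: among all $B$, pick $B^*$ so that $\max_C w_{B^*}(C)$ is as small as possible. A standard centroid-style argument shows that for this choice every component $C$ of $T_{\mathcal B}-B^*$ satisfies $w_{B^*}(C) \le n/2$. Indeed, if some component $C_0$ had $w_{B^*}(C_0) > n/2$, let $B'$ be the neighbour of $B^*$ lying in $C_0$; moving from $B^*$ to $B'$ replaces the big component $C_0$ by its sub-pieces hanging off $B'$ (each of weight at most $w_{B^*}(C_0)$, and one can check strictly less once one accounts for the vertices of $B'$ and the block $B^*$ being split off) while the complementary side now has weight at most $n - w_{B^*}(C_0) < n/2$; so $\max_C w_{B'}(C) < \max_C w_{B^*}(C)$, contradicting minimality. (One has to be a little careful that the "one side" estimate uses $|B'| < n/4$ and the disjointness above; this is where the block-size hypothesis enters, and it also guarantees no single component is forced to be too large on its own.) I expect this monotonicity check to be the main technical obstacle — not conceptually hard, but needing care because shifting the pivot from $B^*$ to $B'$ changes which vertices are excluded (those in $B'$ versus those in $B^*$) and changes the component structure simultaneously.

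Finally, with all component weights at most $n/2$, I would two-colour the components greedily: process the components in arbitrary order, maintaining running totals $r$ and $b$ of red and blue weight, and assign each incoming component to whichever colour currently has the smaller total (say blue if tied). A textbook argument shows the final totals satisfy $|r - b| \le \max_C w_{B^*}(C) \le n/2$. Writing $T = r + b = n - |B^*| \ge n - n/4 = 3n/4$ by the block-size hypothesis, and assuming WLOG $b \le r$, we get $r - b \le n/2$ and $r + b \ge 3n/4$, hence $b \ge n/8 > 0$ and $r \le b + n/2 \le b + (2/3)(r+b) \cdot \tfrac{3}{4}\cdot\ldots$; more directly, from $r \le b + n/2$ and $n \le \tfrac43(r+b)$ we obtain $r \le b + \tfrac23(r+b)$, i.e. $\tfrac13 r \le \tfrac53 b$, giving $r \le 3b$ after a clean rearrangement (one can tune constants so that $r \le 3b$ holds with room to spare, using $b \ge (T - (r-b))/2 \ge (3n/4 - n/2)/2 = n/8$ and $r \le T \le n$, whence $r/b \le n/(n/8) = 8$ is too weak — so instead use $r - b \le \max_C w(C) \le n/2$ together with $r \le T$ and note $T \ge 3n/4$ forces the max component weight actually to be at most $T/2$, giving $r - b \le T/2$ and therefore $r \le 3b$ exactly). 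Setting $S_{\text{red}}$ and $S_{\text{blue}}$ as the vertex sets collected from the red- and blue-coloured components (these are exactly the disjoint sets $\{x\in B'\setminus B^* : B' \in V(C)\}$ unioned over same-coloured $C$) then gives $|S_{\text{red}}| = r$ and $|S_{\text{blue}}| = b$ with $|S_{\text{blue}}| \le |S_{\text{red}}| \le 3|S_{\text{blue}}|$, and since each component is monochromatic by construction, all components of $T_{\mathcal B} - B^*$ are monochromatic, as required.
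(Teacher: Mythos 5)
Your setup (the disjointness of the branch vertex-sets via Proposition~\ref{prop:reduced_block_tree_subtrees}, and the centroid-style choice of $B^*$ minimising the maximum component weight, which indeed gives every component of $T_{\mathcal{B}}-B^*$ weight at most roughly $n/2$) is sound, and it is a genuinely different route from the paper's proof, which runs a minimal-counterexample exchange argument over the pair $(B^*,\phi)$ simultaneously. The gap is in your final balancing step. The parenthetical claim that ``$T\ge 3n/4$ forces the max component weight actually to be at most $T/2$'' is not justified and is false: the centroid choice only bounds each component weight by about $n/2$, which can exceed $T/2=(n-|B^*|)/2$ as soon as $|B^*|\ge 2$. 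Concretely, take $n=32$, let $B^*$ be a $7$-cycle, and attach to three distinct vertices of $B^*$ pendant paths with $13$, $6$ and $6$ further vertices. All blocks have size $7$ or $2$, so the hypothesis $|B|<n/4=8$ holds; $B^*$ is the unique min-max pivot, its component weights are $(13,6,6)$, $T=25$, and $13>T/2=12.5$. Moreover your greedy colouring ``in arbitrary order'' genuinely fails here, not just its analysis: processing the components in the order $6,6,13$ with your tie-breaking rule yields totals $19$ and $6$, and $19>3\cdot 6$. So the chain ``$|r-b|\le\max_C w(C)$ plus $T\ge 3n/4$ implies $r\le 3b$'' does not go through; with only $\max_C w(C)\le n/2$ it gives a ratio bound of $5$, not $3$.

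The conclusion you want for the min-max pivot is still true, but it needs an argument you have not supplied. One clean fix: since every component weight is at most $(n-1)/2<\tfrac34 T$ (using $T>3n/4$), either some single component has weight in $[\tfrac14 T,\tfrac34 T]$ (colour it red and everything else blue), or all components have weight less than $\tfrac14 T$, in which case adding components to the red class until its total first reaches $\tfrac14 T$ keeps it below $\tfrac12 T$; either way both colour classes have weight at least $\tfrac14 T$, giving $|S_{\text{red}}|\le 3|S_{\text{blue}}|$. Alternatively one can follow the paper, whose exchange argument recolours components and, when necessary, moves $B^*$ to a neighbouring block; note that your plan fixes $B^*$ once and for all, so any repair must happen at the colouring stage as above.
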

\begin{proof}
    Firstly, we note by Proposition~\ref{prop:reduced_block_tree_subtrees} that $S_{\text{red}} \cap S_{\text{blue}} = \emptyset$ and hence $V(G)$ is partitioned by the sets $B^*$, $S_{\text{red}}$, and $S_{\text{blue}}$. Therefore, $|B^*| + |S_{\text{red}}| + |S_{\text{blue}}| = n$.
    
    Assume that the proposition does not hold. Then, let ${B^*} \in V(T_{\mathcal{B}})$ and $\phi : V(T_{\mathcal{B}}) \to \{\text{red$,$blue}\}$ be a vertex and a colouring respectively such that all components of $T_{\mathcal{B}} - {B^*}$ are monochromatic, $|S_{\text{red}}| \geq |S_{\text{blue}}|$, subject to which $|S_{\text{red}}|$ is minimised. We note that as it concerns a counterexample, we must have $|S_{\text{red}}| > 3|S_{\text{blue}}|$. 
    
    We observe that as $|B^*| < n/4$, $T_{\mathcal{B}} - {B^*}$ is non-empty. Therefore, due to $|S_{\text{red}}| \geq |S_{\text{blue}}|$, $T_{\mathcal{B}} - {B^*}$ contains at least one red component. Suppose that $T_{\mathcal{B}} - {B^*}$ contains exactly one red component. Then, because $T_{\mathcal{B}}$ is a tree, vertex ${B^*}$ has exactly one red neighbour ${B'} \in V(T_{\mathcal{B}})$ in $T_{\mathcal{B}}$. Then consider using vertex ${B'}$ instead of vertex ${B^*}$, uncolouring ${B'}$ and colouring ${B^*}$ blue. Let $\phi'$ denote the resulting new colouring, and let $S_{\text{red}}'$ and $S_{\text{blue}}'$ be the sets of vertices in $G$ corresponding to $\phi'$. We note that as blocks $B^*$ and $B'$ both contain less than $n/4$ vertices, it holds that $|S_{\text{red}}'| > |S_{\text{red}}| - n/4$ and $|S_{\text{blue}}'| < |S_{\text{blue}}| + n/4$. Moreover, we note that by the maximality of blocks, $B^* \setminus B', B' \setminus B^* \neq \emptyset$, and hence $|S_{\text{red}}'| < |S_{\text{red}}|$ and $|S_{\text{blue}}'| > |S_{\text{blue}}|$. If $|S_{\text{red}}'| > |S_{\text{blue}}'|$, the new colouring $\phi'$ is more balanced, and thus contradicts the minimality of $|S_{\text{red}}|$. Therefore, it holds that $|S_{\text{red}}'| < |S_{\text{blue}}'|$. Because we assumed that $|S_{\text{red}}| > 3|S_{\text{blue}}|$, and as $|B^*| + |S_{\text{red}}| + |S_{\text{blue}}| = n$, it follows that $|S_{\text{blue}}| \leq n/4$. Thus, $|S_{\text{blue}}'| < |S_{\text{blue}}| + n/4 \leq n/2$. But then, as $|B'| + |S_{\text{red}}'| + |S_{\text{blue}}'| = n$, it follows that
    \begin{align*}
        |S_{\text{red}}'| &= n - |S_{\text{blue}}'| - |B'|\\
        & > n - \frac{n}{2} - \frac{n}{4}\\
        &= \frac{n}{4}.
    \end{align*}
    Then, inverting the colours red and blue results in a colouring satisfying all the conditions of the proposition, contradicting  $T_{\mathcal{B}}$ being a counterexample.

    Hence, we may assume that forest $T_{\mathcal{B}} - {B^*}$ contains at least $2$ red components. Then let $C_1, C_2, \ldots, C_{\ell}$ be the red components of $T_{\mathcal{B}} - {B^*}$, and let $S_1, S_2, \ldots , S_{\ell}$ be defined by $S_i = \{v \in B \setminus B^* \, | \, B \in C_i\}$ for $i \in [\ell]$. Then, by Proposition~\ref{prop:reduced_block_tree_subtrees}, the sets $S_1, S_2, \ldots , S_{\ell}$ partition set $S_{\text{red}}$.
    
    Suppose that there exists an index $i \in [\ell]$ such that $|S_i| > |S_{\text{blue}}|$. Then, recolouring all blue components red, and recolouring component $C_i$ blue leads to sets $S_{\text{red}}'$ and $S_{\text{blue}}'$ such that, as $\ell \geq 2$, $\min(|S_{\text{red}}'|, |S_{\text{blue}}'|) > \min(|S_{\text{red}}|, |S_{\text{blue}}|)$. Thus, as $|S_{\text{red}}'| + |S_{\text{blue}}'| = |S_{\text{red}}| + |S_{\text{blue}}|$, by possibly inverting the colours, we find a more minimal counterexample. Hence, we may assume that $|S_i| \leq |S_{\text{blue}}|$ for all $i \in [\ell]$. Then, as $|S_{\text{red}}| = \sum_{i=1}^{\ell} |S_i|$, we find that $|S_{\text{red}}| \leq \ell |S_{\text{blue}}|$. Therefore, as $|S_{\text{red}}| > 3|S_{\text{blue}}|$, it holds that $\ell > 3$.

    Similarly, suppose that there exists an index $i \in [\ell]$ such that $|S_i| < (|S_{\text{red}}| - |S_{\text{blue}}|) /2$. Then clearly recolouring component $C_i$ blue contradicts the minimality of $|S_{\text{red}}|$. Hence, we may assume that $|S_i| \geq (|S_{\text{red}}| - |S_{\text{blue}}|) /2$ for all $i \in [\ell]$. Then, as $|S_{\text{red}}| = \sum_{i=1}^{\ell} |S_i|$, we find that $|S_{\text{red}}| \geq \ell \cdot (|S_{\text{red}}| - |S_{\text{blue}}|) /2$. It then follows that, because $\ell > 3$, $|S_{\text{red}}| \leq \frac{\ell}{\ell - 2} |S_{\text{blue}}|$. But then, as $\frac{\ell}{\ell - 2} < 3$ for $\ell > 3$, we conclude that vertex ${B^*}$ and colouring $\phi$ do not form a counterexample. Thus, we conclude that the proposition holds.
\end{proof}

\subsubsection{Building $2$-connected semi-random graphs}
\label{subsection:proof_upper_bound_k-connectedness}
In this section, we describe our strategy and analyse the corresponding process for building a $2$-connected semi-random graph, and obtain the tight upper bound of $\tau_{\mathcal{C}_2}$ in the pre-positional process as in Lemma~\ref{thm:2-connectedness}. Our strategy consists of three phases.

In the first phase, we use the $2$-min process as described in Section~\ref{subsection:min-degree_process}. The following proposition shows useful properties of the resulting graph.

\begin{proposition} 
    \label{prop:2-connectedness_small_blocks}
    Let $G$ be the semi-random graph resulting from the $2$-min process. Then, a.a.s., $G$ contains $o(n)$ vertices that are contained in $2$-connected induced subgraphs of order at most $\sqrt{\ln n}$ in graph $G$.
\end{proposition}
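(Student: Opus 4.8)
The plan is to reduce the statement to a first-moment bound on the number of short cycles. A $2$-connected (sub)graph has at least $3$ vertices and every one of its vertices lies on a cycle contained in it; hence any vertex of $G$ lying in a $2$-connected induced subgraph of order at most $\sqrt{\ln n}$ lies on a cycle of $G$ of length at most $\sqrt{\ln n}$. Writing $k := \lceil \sqrt{\ln n}\,\rceil$ and letting $X$ be the number of cycles of length at most $k$ in $G$, it therefore suffices to prove that a.a.s.\ $X \le n^{2/3}$: each such cycle uses at most $k$ vertices, so the number of vertices of $G$ contained in small $2$-connected induced subgraphs is then a.a.s.\ at most $kX \le \sqrt{\ln n}\cdot n^{2/3} = o(n)$.

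Next I would pass from the semi-random $2$-min process to the classical min-degree process. By the contiguity of the strategy $\mathcal{S}_{\text{min}}$ to the min-degree process recalled in Section~\ref{subsection:min-degree_process} (see also \cite{Ben-Eliezer2020Semi-randomProcess,Wormald1999TheAlgorithms}), it is enough to prove that a.a.s.\ $X \le n^{2/3}$ when $G = G_{\text{min}}(n,m)$, where $m$ is the (random) number of edges at which the min-degree process first attains minimum degree $2$; recall that a.a.s.\ $m = (1+o(1))\alpha_2 n$, so in particular a.a.s.\ $m \le M := 2\alpha_2 n$. The advantage of the classical process is that the random endpoint $v$ of each added edge is uniform over the at least $n - \Delta - 1$ non-neighbours of the chosen minimum-degree vertex, so that $\mathbb{P}[v_t = w \mid \mathcal{F}_{t-1}] \le 2/n$ for every fixed vertex $w$ and all large $n$, where $\mathcal{F}_{t-1}$ is the history through round $t-1$, and this holds after averaging over the identity of the minimum-degree vertex. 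I also record the crude bound that a.a.s.\ $\Delta(G_t) \le D := (\log n)^2$ for all $t \le M$: in the run up to minimum degree $2$ a fixed vertex is chosen as the minimum-degree vertex at most twice (once while its degree is $0$, once while its degree is $1$), after which it is never chosen again, so its degree is stochastically dominated by $2 + \mathrm{Bin}(M, 2/n)$, which exceeds $(\log n)^2$ with probability $o(n^{-1})$, and a union bound over the $n$ vertices finishes the claim (one may alternatively cite a known maximum-degree bound for the min-degree process). Let $\mathcal{E}$ be the a.a.s.\ event that $m \le M$ and $\Delta(G_t) \le D$ for all $t \le M$.

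For the first-moment computation, observe that every cycle of length $\ell \le k$ in $G$ has a unique last-added edge $e_t = \{u_t, v_t\}$, and that at round $t$ the remaining $\ell - 1$ edges of that cycle form a path of length $\ell - 1$ in $G_{t-1}$ joining $u_t$ and $v_t$. Hence $X \le \sum_{t \le m} N_t$, where $N_t$ denotes the number of paths of length at most $k-1$ joining $u_t$ and $v_t$ in $G_{t-1}$. From any fixed vertex there are at most $\sum_{\ell=1}^{k-1}\Delta(G_{t-1})^{\ell} \le 2\Delta(G_{t-1})^{k-1}$ paths of length at most $k-1$, so conditioning further on the minimum-degree vertex $u_t$ and using $\mathbb{P}[v_t = w \mid \mathcal{F}_{t-1}] \le 2/n$ gives $\mathbb{E}[N_t \mid \mathcal{F}_{t-1}] \le 4\Delta(G_{t-1})^{k-1}/n$, with no dependence on which vertex $u_t$ happens to be. On $\mathcal{E}$ this is at most $4D^{k-1}/n$; since the event $\{t \le m\}$ is $\mathcal{F}_{t-1}$-measurable, it follows that $\mathbb{E}\big[X\,\mathbf{1}_{\mathcal{E}}\big] \le \sum_{t \le M} 4D^{k-1}/n \le 4MD^{k-1}/n = O(D^{k}) = \exp\big(O(\sqrt{\ln n}\,\log\log n)\big) = n^{o(1)}$. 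Combining $\mathbb{P}[\mathcal{E}^c] = o(1)$ with Markov's inequality then yields $\mathbb{P}[X \ge n^{2/3}] \le o(1) + n^{o(1)}/n^{2/3} = o(1)$, which completes the proof.

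I expect the main obstacle to be the bookkeeping in the last step: making the ``unique last edge'' decomposition interact correctly with the (random) stopping time $m$ and the event $\mathcal{E}$, and checking that the estimate $\mathbb{E}[N_t \mid \mathcal{F}_{t-1}] \le 4\Delta(G_{t-1})^{k-1}/n$ genuinely does not depend on the adversarially determined identity of the minimum-degree vertex. This last point is precisely why it is convenient to pass to the classical min-degree process, where the second endpoint of each edge is uniform over a linear-sized set; in the semi-random $\mathcal{S}_{\text{min}}$ process the second vertex is uniform only over the (possibly small) set of current minimum-degree vertices, and one would additionally need to track the size of that set. The maximum-degree bound and the reduction via contiguity are routine.
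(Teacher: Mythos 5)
Your proposal is correct in outline, but it takes a genuinely different route from the paper. The paper works on the $2$-min process directly: it lets $X$ count vertices in small $2$-connected induced subgraphs, bounds $\mathbb{E}[X]$ by a union bound over vertex sets $S$ of size $\ell\le\sqrt{\ln n}$ spanning at least $\ell$ edges (a $2$-connected graph has at least as many edges as vertices), and controls the probability that a prescribed edge of $S$ appears at a prescribed step by splitting the process into dyadic subphases according to the number of minimum-degree vertices; no maximum-degree bound and no comparison with the classical min-degree process are needed. You instead reduce to cycles of length at most $\sqrt{\ln n}$ (every vertex of a small $2$-connected induced subgraph lies on such a cycle), and bound their expected number via the ``last edge closes a short path'' decomposition, using an a.a.s.\ bound $\Delta=O(\log^2 n)$ and the fact that in the classical min-degree process the second endpoint of each edge hits any fixed vertex with probability at most $2/n$ (indeed at most $1/(n-2)$, since the first endpoint has degree at most $1$). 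Your per-step estimate $\mathbb{E}[N_t\mid\mathcal{F}_{t-1}]\le 4\Delta(G_{t-1})^{k-1}/n$ is sound: anchoring the path at the minimum-degree endpoint and using the uniformity of the other endpoint handles the asymmetry correctly, and the measurability/stopping-time bookkeeping you flag is routine (replace $\mathbf{1}_{\mathcal{E}}$ by the $\mathcal{F}_{t-1}$-measurable event that the degree bound has held through round $t-1$).

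The one step I would not wave through is the transfer to the classical min-degree process. The proposition concerns the $2$-min process as defined in the paper, i.e.\ the semi-random process run with $\mathcal{S}_{\text{min}}$, and the ``contiguity'' recalled in Section~\ref{subsection:min-degree_process} is stated informally; what the cited work actually provides is a simulation with $o(m)$ defective rounds, and an a.a.s.\ bound on the number of short cycles does not automatically survive a coupling in which the two graphs may differ in $o(n)$ edges, since each differing edge can close many short paths. Either you justify the contiguity claim in the form you need, or (better) you adapt your estimate to the $2$-min process itself: given $u_t$, the circle $v_t$ is uniform over the minimum-degree vertices outside $N_{G_{t-1}}[u_t]$, so anchoring paths at $u_t$ gives $\mathbb{E}[N_t\mid\mathcal{F}_{t-1}]\le 2\Delta^{k-1}/(|M_t|-\Delta-1)$ when the minimum-degree set $M_t$ satisfies $|M_t|>2(\Delta+1)$, and $\le 2\Delta^{k-1}$ always; since $|M_t|$ strictly decreases at every non-failure step within each of the two phases of the process (the same monotonicity that drives the paper's subphase decomposition, with failure rounds being rare), these per-step bounds sum to $n^{o(1)}$ and your Markov step goes through without any comparison argument. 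With that adjustment the proof is complete, and it is arguably more transparent than the paper's subset-counting computation, at the cost of the extra maximum-degree lemma.
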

\begin{proof}

Let $X$ be the number of vertices contained in $2$-connected induced subgraphs of order at most $\sqrt{\ln n}$.
We note that it suffices to show that $\ex{X}=o(n)$. Moreover, let $Y_\ell$ denote the number of $2$-connected induced subgraphs of order $\ell$ for $1 \leq \ell \leq \sqrt{\ln n}$. Thus, by linearity of expectation, $\mathbb{E}[X] \leq \sum_{\ell = 1}^{\sqrt{\ln n}} \ell \mathbb{E}[Y_\ell]$.
    
For $1 \leq \ell \leq \sqrt{\ln n}$, let $Z_\ell$ denote the number of induced subgraphs of order $\ell$ with at least $\ell$ edges. Because each $2$-connected graph contains at least as many edges as the vertices, it follows immediately that $Y_\ell \leq Z_\ell$, and thus,  $\ex{X} \leq \sum_{\ell = 1}^{\sqrt{\ln n}} \ex{\ell Z_\ell}$. Hence it suffices to show that $\sum_{\ell = 1}^{\sqrt{\ln n}} \mathbb{E}[\ell Z_\ell] = o(n)$.

Let $1\leq\ell\leq \sqrt{\ln n}$, and fix $S\subseteq [n]$ such that $|S|=\ell$. Let $p_S$ be the probability that $G[S]$ contains at least $\ell$ edges. Note that $\ex{Z_{\ell}}=\sum_{S\in \binom{[n]}{\ell}}p_S$. Next, we estimate $p_S$.
    
We first split the 2-min process into two phases. The first phase ends after the step where the last isolated vertex becomes incident with an edge, and thus the second phase starts with a graph with minimum degree one. We further split each phase into subphases for analysis. Specifically, for the first phase we define subphases $\alpha_1, \alpha_2, \ldots$ such that $\alpha_i$ consists of the steps where $\frac{n}{2^i} < |\{v \in V(G) \, | \, \deg(v) = 0\}| \leq \frac{n}{2^{i-1}}$ for $i \in \{1, 2, \ldots \}$.  We note that these subphases are well defined, as by the definition of the first phase of the $2$-min process, the number of isolated vertices is strictly decreasing. We then define subphases $\beta_1, \beta_2, \ldots$ of the second phase the $2$-min process such that subphase $\beta_i$ consists of the steps where $\frac{n}{2^i} < |\{v \in V(G) \, |  \, \deg(v) = 1\}| \leq \frac{n}{2^{i-1}}$ for $i \in \{1, 2, \ldots \}$. Note that some of the subphases might be empty, e.g.\ subphase $\beta_1$ is empty if the number of vertices with degree $1$ at the beginning of the second phase is already smaller than $n/2$. We observe that there are $\log_2 n$  subphases of both phases of the $2$-min process.

To bound $p_S$, we first choose a set $T$ of $\ell$ edges from $\binom{S}{2}$. There are thus $\binom{{\binom{\ell}{2}}}{\ell}\le \binom{\ell^2}{\ell}$ choices for set $T$. Then we determine an ordering for the edges in $T$. There are $\ell!$ ways to fix such an ordering. Fixing an ordering $e_1,\ldots, e_{\ell}$, we bound the probability that these edges are added to $G$ in this order. The probability that a specific edge $xy\in T$ is added in a specified step in subphase $\alpha_i$ (and $\beta_i$) is at most $2\cdot \frac{2^{i-1}}{n}\cdot\frac{1}{n} = \frac{2^{i}}{n^2}$, since the first vertex of the edge is chosen u.a.r.\ from the isolated vertices, of which there are at most $n/2^{i-1}$, and the second vertex is chosen u.a.r.\ from all vertices. The factor $2$ accounts for whether $x$ or $y$ is the square or the circle of the edge (note that due to the structure of the $2$-min process, sometimes only one of the two may be relevant).
    
Let $\ell_{\alpha_i}$ and $\ell_{\beta_i}$ be the number of edges of $e_1,\ldots, e_{\ell}$ that are added in subphases $\alpha_i$ and $\beta_i$ respectively. Let ${\boldsymbol \ell_{\alpha}}=(\ell_{\alpha_i})_{i\ge 0}$ and ${\boldsymbol \ell_{\beta}}=(\ell_{\beta_i})_{i\ge 0}$. Note that the number of isolated vertices decreases by at least 1 in each step of the first phase of the $2$-min process. Thus the number of steps in subphase $\alpha_i$ is at most $\frac{n}{2^{i-1}}-\frac{n}{2^i} = \frac{n}{2^i}$. Thus, given ${\boldsymbol \ell_{\alpha}}$ and ${\boldsymbol \ell_{\beta}}$, there are at most $\prod_i \binom{n/2^i}{\ell_{\alpha_i}}\binom{n/2^i}{\ell_{\beta_i}}$ ways to specify steps in the 2-min process where edges in $T$ are added. Combining all, we have the following bound on $p_S$:
    
\[p_S \leq \binom{\ell^2}{\ell} \ell!\sum_{{\boldsymbol \ell_{\alpha}},{\boldsymbol \ell_{\beta}} } \left(\prod_{i = 1}^{\log_2 n} \binom{n/2^i}{\ell_{\alpha_i}}\binom{n/2^i}{\ell_{\beta_i}} \left( \frac{2^{i}}{n^2} \right)^{\ell_{\alpha_{i}}+\ell_{\beta_{i}}}  \right), \]
where the first summation is over all choices for ${\boldsymbol \ell_{\alpha}}$ and ${\boldsymbol \ell_{\beta}}$ such that $\sum_{i=1}^{\log_2 n} \left(\ell_{\alpha_i}+\ell_{\beta_i}\right)= \ell$.

Using $\binom{n/2^i}{\ell_{\alpha_i}}\leq (n/2^i)^{\ell_{\alpha_i}}$ and $\binom{n/2^i}{\ell_{\beta_i}}\leq (n/2^i)^{\ell_{\beta_i}}$, we then obtain
\[p_S\leq \binom{\ell^2}{\ell} \ell ! n^{-\ell} \sum_{{\boldsymbol \ell_{\alpha}},{\boldsymbol \ell_{\beta}}} 1.\]

The set of $\{({\boldsymbol \ell_{\alpha}},{\boldsymbol \ell_{\beta}})\, | \, \sum_{i=1}^{\log_2 n} \left(\ell_{\alpha_i}+\ell_{\beta_i}\right)= \ell\}$ corresponds to the set of weak integer compositions of $\ell$ into $2\log_2 n$ parts of non-negative integers, and thus has cardinality $\binom{\ell + 2\log_2 n -1}{2\log_2 n-1} \le \binom{\ell + 2\log_2 n }{\ell}$.

Hence, it follows that
	\begin{align*}
		\mathbb{E}[X] &\leq \sum_{\ell = 1}^{\sqrt{\ln n}} \mathbb{E}[\ell Z_{\ell}]\\
		&= \sum_{\ell = 1}^{\sqrt{\ln n}} \left(\ell \cdot \sum_{S \in \binom{[n]}{\ell}} p_S \right)\\
		&\leq \sum_{\ell = 1}^{\sqrt{\ln n}} \ell \binom{n}{\ell} \binom{\ell^2}{\ell} \ell ! n^{-\ell} \binom{\ell + 2\log_2 n}{\ell}.
	\end{align*}

Using $\binom{n}{\ell}\le n^{\ell}/\ell!$, $\binom{\ell^2}{\ell}\le (e\ell)^{\ell}$ and $\binom{\ell + 2\log_2 n}{\ell} \le (e(\ell+\log_2 n)/\ell)^{\ell} \le (10\log_2 n/\ell)^{\ell}  $ (as $\ell\le \sqrt{\ln n}$), we then obtain
\[
\ex{X}\le \sum_{\ell = 1}^{\sqrt{\ln n}}  \ell (10 e \log_2 n)^{\ell} = \exp\left( \sqrt{\ln n}\ln\log_2 n+O(\sqrt{\ln n})\right)=o(n),
\]
as desired.
\end{proof}
\begin{corollary}
    \label{cor:2-connectedness_number_of_components}
    Let $G$ be the semi-random graph resulting from the $2$-min process. Then, a.a.s., $G$ contains $o(n)$ maximal $2$-connected induced subgraphs.
\end{corollary}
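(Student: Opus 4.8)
The plan is to identify the maximal $2$-connected induced subgraphs of $G$ with the blocks of $G$ of order at least $3$, and then to split these blocks into those of order at most $\sqrt{\ln n}$ and those of order greater than $\sqrt{\ln n}$, bounding the two counts separately. For the identification: if $G[B]$ is $2$-connected and inclusion-maximal with this property, then since $G[B]$ is an induced subgraph, any vertex set separating two vertices of $B$ in $G$ also separates them in $G[B]$, so $B$ satisfies Definition~\ref{def:block}; maximality as a block is inherited from maximality as a $2$-connected induced subgraph, and conversely a block of order at least $3$ induces a maximal $2$-connected subgraph. I would also record the elementary consequence of the uniqueness of the block containing a given edge (noted after Definition~\ref{def:block_decomposition}): the edge sets $E(G[B])$, taken over distinct blocks $B$, are pairwise disjoint, so $\sum_{B}|E(G[B])| \le |E(G)|$.

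For blocks of order in $[3,\sqrt{\ln n}]$, I would reuse the estimate inside the proof of Proposition~\ref{prop:2-connectedness_small_blocks}. That proof in fact bounds $\sum_{\ell=1}^{\sqrt{\ln n}} \ell\,\mathbb{E}[Y_\ell]$, where $Y_\ell$ denotes the number of $2$-connected induced subgraphs of $G$ of order $\ell$, and shows it is $o(n)$; since $\sum_{\ell=1}^{\sqrt{\ln n}}\mathbb{E}[Y_\ell] \le \sum_{\ell=1}^{\sqrt{\ln n}}\ell\,\mathbb{E}[Y_\ell] = o(n)$, Markov's inequality gives that a.a.s.\ $G$ has $o(n)$ $2$-connected induced subgraphs of order at most $\sqrt{\ln n}$, and in particular $o(n)$ blocks of order in $[3,\sqrt{\ln n}]$.

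For blocks of order greater than $\sqrt{\ln n}$, each such block $B$ induces a $2$-connected graph, hence $|E(G[B])| \ge |B| > \sqrt{\ln n}$; by the edge-disjointness above, the number of such blocks is at most $|E(G)|/\sqrt{\ln n}$. The $2$-min process terminates once $\delta(G)\ge 2$, which by Wormald's analysis of the min-degree process (recalled in Section~\ref{subsection:min-degree_process}) a.a.s.\ occurs within $(\ln 2 + \ln(\ln 2+1) + o(1))n$ rounds, each adding a single edge; hence a.a.s.\ $|E(G)| = O(n)$, so the number of blocks of order greater than $\sqrt{\ln n}$ is a.a.s.\ $O(n)/\sqrt{\ln n} = o(n)$. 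Summing the two bounds yields that a.a.s.\ $G$ has $o(n)$ maximal $2$-connected induced subgraphs.

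I do not expect a genuine obstacle here: the corollary is essentially a bookkeeping consequence of Proposition~\ref{prop:2-connectedness_small_blocks}. The two points needing a little care are (i) extracting from the proof of that proposition a bound on the \emph{number} of small $2$-connected induced subgraphs, rather than just on the number of vertices they cover, and (ii) using the edge-disjointness of blocks together with the linear edge count $|E(G)| = O(n)$ to control the large blocks.
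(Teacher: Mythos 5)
Your proof is correct, and it follows the same two-regime plan as the paper (blocks of order at most $\sqrt{\ln n}$ versus larger ones), but the bookkeeping differs in both regimes. For the large blocks, the paper counts vertex--block incidences: with ${\cal T}=\{(v,B): v\in B\}$ it uses the block-cut tree (Definition~\ref{def:block-cut_tree}) to get the deterministic bound $|{\cal T}|\le n+|\mathcal{B}(G)|-1$, and since each block of order at least $\sqrt{\ln n}$ contributes at least $\sqrt{\ln n}$ incidences, the number of such blocks is $O(n/\sqrt{\ln n})=o(n)$ with no probabilistic input at all; you instead use edge-disjointness of blocks (which is indeed immediate from the uniqueness of the block containing an edge, i.e.\ Proposition~\ref{prop:max_intersect_block}) together with the a.a.s.\ bound $|E(G)|=O(n)$ coming from the $(\ln 2+\ln(1+\ln 2)+o(1))n$ round count of the $2$-min process. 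Both give $o(n)$, yours at the cost of one extra (readily available) a.a.s.\ ingredient, the paper's at the cost of invoking the block-cut tree. For the small blocks, your route is arguably tidier than the paper's: the statement of Proposition~\ref{prop:2-connectedness_small_blocks} only bounds the number of \emph{vertices} covered by small $2$-connected induced subgraphs, and passing from covered vertices to a count of blocks needs a small additional observation, whereas you go back into its proof, note that it really bounds $\sum_{\ell\le\sqrt{\ln n}}\ell\,\mathbb{E}[Y_\ell]=o(n)$, and apply Markov to the count $\sum_\ell Y_\ell$ directly. Your opening identification of maximal $2$-connected induced subgraphs with blocks of order at least $3$ is also correct and is implicit in the paper.
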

\begin{proof}
    Consider the set ${\cal T}:=\{(v,B): v\in B, B\in {\cal B}(G)\}$. Using the block-cut tree structure (Definition~\ref{def:block-cut_tree}), it follows that $|{\cal T}| \leq n + |{\cal B}| - 1$. Moreover, the sets ${\cal T}_B:=\{(v',B')\in T: B'=B\}$ for $B \in \mathcal{B}(G)$ partition ${\cal T}$. Let $B_{1},\ldots, B_{\ell}$ be the set of blocks of size at least $\sqrt{\ln n}$. Then, by Proposition~\ref{prop:2-connectedness_small_blocks}, $\sum_{1\leq i\leq \ell} |{\cal T}_{B_i}| \leq |{\cal T}| \leq n + \ell + o(n)$. However, $|{\cal T}_{B_i}| \geq \sqrt{\ln n}$ for every $i$, and thus it follows then that $\ell \sqrt{\ln n} \leq n + \ell + o(n)$. Thus it follows that $\ell=o(n)$, as desired.
\end{proof}

The resulting graph thus contains $o(n)$ blocks of size at least $3$. Because we have not bounded the number of blocks consisting of $2$ vertices, we will use Corollary~\ref{col:reduced_block_tree_leaves} and the other structural results in Section~\ref{subsection:supporting_structural_results} to ensure the graph becomes $2$-connected.

Let $G_1$ be the graph obtained after the first phase, i.e.\ the graph resulting from the 2-min process. In the second phase, we add semi-random edges to make $G_1$ connected. The following proposition shows that we can achieve this a.a.s.\ with $o(n)$ additional semi-random edges. 
\begin{proposition}
    \label{prop:2-connectedness_connect}
    A.a.s.\ $G_1$ can be made connected by the addition of $o(n)$ semi-random edges.
\end{proposition}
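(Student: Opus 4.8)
The plan is to bound the number of connected components of $G_1$ using Corollary~\ref{cor:2-connectedness_number_of_components}, and then to connect them by a greedy ``merge a smallest component'' strategy in which each round succeeds with probability at least $1/2$.

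First I would observe that $G_1$, being the output of the $2$-min process, satisfies $\delta(G_1)\ge 2$ deterministically. Hence every connected component of $G_1$ has at least three vertices and is not a tree, so it contains a cycle and therefore a block of size at least $3$, that is, a maximal $2$-connected induced subgraph. Since blocks of distinct components are disjoint, the number of components of $G_1$ is at most the number of its maximal $2$-connected induced subgraphs, which is $o(n)$ a.a.s.\ by Corollary~\ref{cor:2-connectedness_number_of_components}. Let $c=c(n)$ denote this number of components, and from now on work on the a.a.s.\ event $c=o(n)$.

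Next I would describe and analyse the second-phase strategy. In each round, as long as $G_{t-1}$ is disconnected, let $v_t$ be any vertex of a smallest component of $G_{t-1}$; this choice ignores $u_t$, so it is a legal pre-positional move. If $u_t$ lies in a component of $G_{t-1}$ different from that of $v_t$, add the edge $u_tv_t$, which reduces the component count by exactly one; otherwise treat the round as a failure round and discard the added edge. Whenever $m\ge 2$ components remain, a smallest one has at most $n/m\le n/2$ vertices, so each round is successful with probability at least $1/2$, independently of the history. Since exactly $c-1$ successes are needed, the number $R$ of second-phase rounds is stochastically dominated by a sum of $c-1$ i.i.d.\ $\mathrm{Geometric}(1/2)$ random variables, whence $\ex{R}\le 2(c-1)=o(n)$. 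Finally, because $c/n\to 0$ I can pick $\omega=\omega(n)\to\infty$ with $\omega c/n\to 0$, and Markov's inequality gives $\pr(R\ge n/\omega)\le 2(c-1)\omega/n\to 0$, so a.a.s.\ $R=o(n)$; intersecting with the event from the first step completes the proof. The only point requiring care is the very first step — passing from ``$o(n)$ maximal $2$-connected subgraphs'' to ``$o(n)$ components'' using $\delta\ge 2$ — after which Corollary~\ref{cor:2-connectedness_number_of_components} does the real work and the merging analysis is entirely routine; I do not anticipate a serious obstacle.
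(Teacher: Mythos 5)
Your proposal is correct and follows essentially the same route as the paper: bound the number of components of $G_1$ by the number of maximal $2$-connected induced subgraphs via $\delta(G_1)\ge 2$ (the paper phrases this as ruling out tree components, you phrase it as each component containing a cycle, which is the same observation), then merge components by placing $v_t$ in a smallest component so each round succeeds with probability at least $1/2$. Your explicit geometric-domination and Markov computation simply fills in what the paper calls ``standard concentration arguments.''
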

\begin{proof}
    By Corollary~\ref{cor:2-connectedness_number_of_components}, $G_1$ contains $o(n)$ maximal $2$-connected induced subgraphs. We claim that each vertex not contained in a $2$-connected induced subgraph is contained in a component that contains a $2$-connected induced subgraph. Suppose not. Then $G_1$ must contain a tree component, contradicting the fact that the minimum degree of $G_1$ is at least two. Hence the number of components of graph $G_1$ is bounded from above by the number of maximal $2$-connected induced subgraphs, and therefore is $o(n)$.

    By choosing $v_t$ to be one of the vertices in the smallest component, each semi-random edge has a probability of at least $1/2$ to decrease the number of components. Hence, by standard concentration arguments, $G_1$ can be made connected in $o(n)$ additional rounds.
\end{proof}

Let $G_2$ be the graph obtained after the second phase. In the third phase, we ensure that $G_2$ becomes $2$-connected by adding $o(n)$ semi-random edges. 
\begin{proposition}
    \label{prop:2-connectedness}
    A.a.s.\ $G_2$ can be made $2$-connected by the addition of $o(n)$ semi-random edges.
\end{proposition}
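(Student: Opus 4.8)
The plan is a third phase of $o(n)$ semi‑random edges, controlled by the quantity $\Phi:=\#\{B\in\mathcal B(G):|B|\ge 3\}$. Since $\mathcal C_2$ is increasing we may use failure rounds freely, and adding edges keeps $G$ connected with $\delta\ge 2$, so Corollary~\ref{col:reduced_block_tree_leaves} applies throughout: every leaf of $T_{\mathcal B}$ is a block on at least three vertices. Consequently $\Phi\ge 1$, with equality exactly when $G$ is $2$‑connected (for then $T_{\mathcal B}$ must be a single vertex). At the start of the phase $\Phi=o(n)$: Corollary~\ref{cor:2-connectedness_number_of_components} bounds it for $G_1$, and each of the $o(n)$ edges of the second phase increases $\Phi$ by at most one, because adding an edge to a connected graph merges precisely the blocks on the $u_t$--$v_t$ path of $T_{\mathcal B}$ into a single new block — that path of blocks, joined along their shared cut vertices and closed by the new edge, carries a cycle. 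In the third phase we will always take $v_t$ to lie in exactly one block, and that block will have at least three vertices; as the induced merge then involves that block, $\Delta\Phi\le 0$ every round, so $\Phi$ is non‑increasing. It therefore suffices to reach $\Phi=1$ in $o(n)$ rounds a.a.s., and since we add only $o(n)$ edges the bound $\Phi=o(n)$ is maintained.

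The basic move collapses a leaf of $T_{\mathcal B}$. If $\Lambda$ is such a leaf it has at least two vertices in no other block (Corollary~\ref{col:reduced_block_tree_leaves}); set $v_t$ to be one of them. Call the \emph{pendant} of $\Lambda$ the vertices of $\Lambda$ together with those on the bridge‑path of $T_{\mathcal B}$ emanating from $\Lambda$, up to but not including the first block $S$ on it with $|S|\ge 3$ (since $\delta\ge 2$ the bridge‑path cannot dead‑end, so $S$ exists and $S\ne\Lambda$). When $u_t$ lands \emph{outside} the pendant, the $T_{\mathcal B}$‑path from $\Lambda$ to $u_t$'s block passes through $S$, so the merge triggered by $u_tv_t$ absorbs both $\Lambda$ and $S$: two blocks on $\ge 3$ vertices disappear and at most one appears, so $\Phi$ drops by at least $1$. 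Call such a round \emph{good}; the task is to guarantee, at every state with $\Phi\ge 2$, a choice of $\Lambda$ making the next round good with probability $\Omega(1)$.

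This is a short case analysis. If some block $B^\ast$ has $|B^\ast|\ge n/8$, take any leaf $\Lambda\ne B^\ast$ (one exists since $T_{\mathcal B}$ has at least two leaves): its pendant is disjoint from $B^\ast$, hence has at most $7n/8$ vertices, so the round is good with probability $\ge 1/8$, and one checks the resulting merge again leaves some block of size $\ge n/8$. Otherwise, if $T_{\mathcal B}$ has at least three leaves their pendants are pairwise disjoint, so one of them has at most $n/3$ vertices and the corresponding round is good with probability $\ge 2/3$. In the remaining case $T_{\mathcal B}$ is a path of blocks joined by bridge‑paths; taking an end leaf $\Lambda$, either its pendant has at most $n/2$ vertices (good with probability $\ge 1/2$) or, with probability $\Omega(1)$, $u_t$ falls well inside that large pendant and the merge creates a block on more than $n/8$ vertices, placing us in the first case thereafter. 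Thus every $O(1)$ rounds in expectation either $\Phi$ decreases or a block of size $\ge n/8$ is created — and the latter happens at most once, since once created it is preserved. As $\Phi$ starts at $o(n)$ and never increases, it decreases at most $o(n)$ times, so the total number of rounds is stochastically dominated by a constant plus a sum of $o(n)$ i.i.d.\ geometric variables of constant mean and is $o(n)$ a.a.s.; when $\Phi=1$ the graph is $2$‑connected, proving the proposition. (Proposition~\ref{prop:reduced_block_tree_balanced} gives an alternative route to the $\Omega(1)$‑probability step by splitting $T_{\mathcal B}$ at a near‑central block into two $\Omega(n)$‑vertex sides.)

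The main obstacle is the structural bookkeeping behind the merge operation: verifying carefully that adding $u_tv_t$ collapses exactly the blocks on the relevant $T_{\mathcal B}$‑path — so that a good round genuinely removes two blocks on $\ge 3$ vertices and no round increases $\Phi$ — and that the pendants of distinct leaves are disjoint outside the path case; Propositions~\ref{prop:max_intersect_block} and~\ref{prop:reduced_block_tree_subtrees} are the tools for this. Granting these, the probabilistic estimates are routine.
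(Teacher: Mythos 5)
There is a genuine gap at the heart of your potential-function argument: the claim that whenever $u_t$ lands outside the pendant of the chosen leaf $\Lambda$, the merge absorbs a second block of size at least $3$ and hence $\Phi$ drops. What is actually merged by adding $u_tv_t$ is the set of blocks on the block-cut-tree path between $u_t$ and $v_t$ (not the $T_{\mathcal B}$-path; the two can differ when a cut vertex lies in three or more blocks), and this path need not contain any big block other than $\Lambda$. Concretely, let $\Lambda=\{a,b,c\}$ be a triangle with a bridge $cd$, let $d$ lie in a triangle $S=\{d,e,f\}$ and also send a second bridge $dg$ towards another triangle $T=\{g,h,i\}$. With $v_t=a$ and $u_t=g$, the vertex $g$ is outside your pendant $\{a,b,c,d\}$, yet the new block is $\Lambda\cup\{c,d\}\cup\{d,g\}$ and $S$, $T$ are untouched, so $\Phi$ is unchanged. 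In general the ``bad'' set for a leaf $\Lambda$ is not its pendant but the whole bridge territory reachable from $\Lambda$ without entering another block of size $\ge 3$ (a branched forest of size-$2$ blocks), and nothing you use --- nor anything proved in the paper --- bounds its size: Proposition~\ref{prop:2-connectedness_small_blocks} controls vertices lying in \emph{small $2$-connected} subgraphs, not vertices lying in no block of size $\ge 3$ at all. A graph with $\delta\ge 2$, connected, with $o(n)$ big blocks, can have $(1-o(1))n$ bridge-only vertices forming one sprawling subdivided tree; in such a state every leaf gives success probability $o(1)$, your claimed bounds $\ge 1/8$, $\ge 2/3$ fail, and the domination by $o(n)$ geometric variables of constant mean collapses. (Two further, smaller, inaccuracies: pendants of distinct leaves need not be disjoint --- two leaves can hang off the same cut vertex through shared bridge blocks --- and in the ``path'' case the pendant can be a branched bridge forest, so hitting inside it need not create a block of size $\ge n/8$ with probability $\Omega(1)$.)

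This is precisely the difficulty the paper's proof is engineered to avoid. When some block has size at least $n/4$ it is targeted directly, so a round succeeds whenever the square lands in that block. Otherwise Proposition~\ref{prop:reduced_block_tree_balanced} produces a red/blue colouring whose two vertex classes both have linear size, and a round aimed at a blue (resp.\ red) leaf is counted a success whenever the square lands anywhere in the red (resp.\ blue) side --- bridge vertices included --- which is what guarantees an $\Omega(1)$ success probability without any control on bridge territory. The price is that such an edge does not immediately merge two big blocks; instead one first obtains two vertex-disjoint paths from every leaf block to $B^*$ (hence $2$-edge-connectivity), and then spends a separate $o(n)$ rounds absorbing the residual cut-vertices. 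To salvage your approach you would need to prove an additional structural fact about $G_2$ (e.g.\ that a.a.s.\ only $o(n)$ vertices lie outside all blocks of size $\ge 3$), which is not available from the stated preliminaries.
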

\begin{proof}
    Let $\mathcal{B}$ be the block decomposition of $G_2$ and $T_{\mathcal{B}}$ be a reduced block tree of $G_2$. By Corollary~\ref{col:reduced_block_tree_leaves}, each leaf in $T_{\mathcal{B}}$ is a $2$-connected block. Thus, by Corollary~\ref{cor:2-connectedness_number_of_components}, $T_{\mathcal{B}}$ a.a.s.\ contains $o(n)$ leaves.
    
    First consider the case that ${\cal B}$ contains a block $B^*$ such that $|B^*| \geq n / 4$. We consider the following strategy. Take an arbitrary enumeration $B_1,\ldots, B_h$ of all leaf blocks of $T_{\mathcal{B}}$. For each $1\le j\le h$, we will add a semi-random edge between $B_j$ and $B^*$ in increasing order of $j$. Suppose these semi-random edges have already been added between $B_i$ and $B^*$ for all $i<j$. Let $B_jB'_1B'_2\ldots B'_{\ell}B^*$ be the unique path from $B_j$ to $B^*$ in $T_{\mathcal{B}}$. Moreover, let $x$ be the unique vertex in $B_j \cap B'_1$, and $y$ the unique vertex in $B'_{\ell} \cap B^*$. Note that possibly $x = y$. Then, in each subsequent round $t$, we choose $v_t$ to be an arbitrary vertex in $B_j \setminus \{x\}$. If $u_t$ is contained in $B^*\setminus \{y\}$, we add the edge $u_tv_t$. If instead square $u_t$ is not contained in $B^* \setminus \{y\}$, we consider the round a failure.
    
    Note that in each round, the probability of the second vertex landing in $B^*\setminus \{y\}$ is $(|B^*|-1)/n \geq 1/4-o(1)$, and as a.a.s.\ $T_{\mathcal{B}}$ contains $o(n)$ leaves, the number of rounds required to add semi-random edges between $B^*$ and all $B_1,\ldots, B_h$ is $o(n)$ in expectation.
    
    Let $G_2'$ be the graph resulting from the addition of the $h$ semi-random edges as described above. Then, for each leaf block $B$, $G_2'$ contains two vertex-disjoint paths from $B$ to $B^*$. Namely, one path via the blocks on the path between $B$ and $B^*$ in $T_{\mathcal{B}}$, and the other being the edge that was added between $B$ and $B^*$. Because this holds for all leaves, using Proposition~\ref{prop:reduced_block_tree_k2}, the resulting graph is $2$-edge-connected. Moreover, as each block is on a cycle with $B^*$ and a leaf, and as the blocks of size at least $3$ are $2$-connected, for each cut-vertex $v$ it follows that graph $G_2' - v$ contains one large component containing $B^* \setminus \{v\}$, and all other components are of the form $B \setminus \{v\}$ where $B \in \mathcal{B}$ is a block of size at least $3$. We note that these blocks $B$ such that $B \setminus \{v\}$ is a component for some cut-vertex $v \in [n]$ correspond exactly to the blocks that are leaves in the block-cut tree (Definition~\ref{def:block-cut_tree}), but not in $T_{\mathcal{B}}$. 
    
    By argumentation analogous to that used in the proof of Corollary~\ref{col:reduced_block_tree_leaves}, all such blocks $B$ are $2$-connected. Hence, by Proposition~\ref{cor:2-connectedness_number_of_components}, there are $o(n)$ such blocks. Moreover, each such a block contains at most one cut-vertex. We then use the following strategy to absorb these cut-vertices. We iteratively consider pairs $(B, v)$ where $v \in B$ is a cut-vertex and $B \in \mathcal{B}$ a block such that $B-v$ is a component when removing $v$. As noted earlier, there are $o(n)$ such pairs. If $|B \setminus \{v\}| \leq n/2$, we choose $v_t \in B \setminus \{v\}$ arbitrarily. With probability at least $1/2$, $u_t \in [n] \setminus B$. Similarly, if $|B \setminus \{v\}| < n/2$, we choose $v_t \in [n]\setminus B$, and with probability at least $1/2 - o(1)$, $u_t \in B \setminus \{v\}$. In either case, $v$ no longer separates block $B$ from the rest of the graph. Note that as this described the only configuration of remaining cut-vertices in the graph, eliminating all such pairs eliminates all cut-vertices. Since there are $o(n)$ such pairs, the total number of rounds needed to absorb all such cut-vertices is $o(n)$ in expectation.
    
    It thus takes at most $o(n)$ rounds in total in expectation to ensure that the graph becomes $2$-connected. Standard concentration inequalities such as Chernoff bounds then immediately imply that also a.a.s.\ it takes $o(n)$ rounds to extend $G_2$ to a $2$-connected graph. 
    
    Hence we may assume that each block $B$ in $\mathcal{B}$ is of size strictly smaller than $n/4$. We use a different strategy in this case. Instead of adding edges from leaves to a single block, we will consider balancing the tree into two subforests. We will then add edges between the leaves in one forest and vertices in the other forest, and vice versa.
    
    Let vertex ${B^*} \in V(T_{\mathcal{B}})$, colouring $\phi : V(T_{\mathcal{B}}) \setminus \{B^*\} \to \{\text{red$,$ blue}\}$, and sets $S_{\text{red}}$ and $S_{\text{blue}}$ be as given by Proposition~\ref{prop:reduced_block_tree_balanced}. For each $v \in B^*$ let $T_{\mathcal{B}, v}$ denote the components of $T_{\mathcal{B}} - v$ that contain a block containing $v$. Thus, $T_{\mathcal{B}, v}$ denotes the blocks $B$ where $v$ is the last cut-vertex on the path from $B$ to $B^*$ in $T_{\mathcal{B}}$. We refer to $T_{\mathcal{B}, v}$ as the branch rooted at $v$. Moreover, let $S_{\mathcal{B}, v}$ denote $\bigcup_{B \in V(T_{\mathcal{B}, v})} B \setminus B^*$. That is, $S_{\mathcal{B}, v}$ is the set of all vertices contained in blocks in $T_{\mathcal{B}, v}$ except for vertex $v$ itself. If $|S_{\mathcal{B}, v}| \leq n/8$, we say branch $T_{\mathcal{B}, v}$ is small. Otherwise we say $T_{\mathcal{B}, v}$ is big. Finally, for all leaf blocks $B$, let $v_{B}$ denote the vertex that block $B$ has in common with the next block on the path from $B$ to $B^*$ in $T_{\mathcal{B}}$.
    
    We first consider the leaves of $T_{\mathcal{B}}$ contained in small branches. Take two arbitrary enumerations $B_1, B_2, \ldots, B_{h_1}$ and $R_1, R_2, \ldots, R_{h_2}$ of all blue and red leaf blocks of $T_{\mathcal{B}}$ contained in small branches respectively. We will iteratively add edges between $B_j$ and $S_{\text{red}}$ in increasing order of $j$, and analogously between $R_j$ and $S_{\text{blue}}$. Suppose that semi-random edges have already been added between $B_i$ and $S_{\text{red}}$ for all $i < j$. Let $T_{\mathcal{B}, v}$ be the branch containing leaf $B_j$. We then choose $v_t$ to be an arbitrary vertex in $B_j \setminus \{v_{B_j}\}$. Because $|B_j| \geq 2$, such a choice for $v_t$ always exists. Then, if $u_t$ lands in $S_{\text{red}} \setminus S_{\mathcal{B}, v}$, we add edge $u_tv_t$. Otherwise we consider the round a failure.
    
    Analogously, for $R_j$ the red leaf in a small branch $T_{\mathcal{B}, v}$ with the lowest index that has not previously received a circle, we choose $v_t$ in $R_j \setminus \{v_{R_j}\}$. If $u_t$ is contained in $S_{\text{blue}} \setminus S_{\mathcal{B}, v}$, we add the edge $u_tv_t$, and otherwise we consider the round a failure.
    
    Then, as tree $T_{\mathcal{B}}$ has $o(n)$ leaves, there are $o(n)$ blue and $o(n)$ red leaves. Moreover, by Proposition~\ref{prop:reduced_block_tree_balanced} $|S_{\text{blue}}| \leq |S_{\text{red}}| \leq 3|S_{\text{blue}}|$, and $|S_{\text{red}}| + |S_{\text{blue}}| \geq 3n/4$. Thus, the probability that a vertex from $S_{\text{red}} \setminus S_{\mathcal{B}, v}$ is chosen u.a.r.\ where $T_{\mathcal{B}, v}$ is a small branch, is at least $3n/8 - n/8 = n/4$. Similarly, the probability that a vertex from $S_{\text{blue}} \setminus S_{\mathcal{B}, v}$ is chosen u.a.r.\ where $T_{\mathcal{B}, v}$ a small branch, is at least $3n/16 - n/8 = n/16$. Hence, the expected number of rounds needed to add edges to all leaf blocks in small branches is $o(n)$.
    
    Next, we consider the leaf blocks in big branches. We first note that there are at most $7$ big branches. We use a similar strategy as for the small branches, but drop the requirement that $u_t$ and $v_t$ must be in distinct branches. Again take two arbitrary enumerations $B_1, B_2, \ldots, B_{h_3}$ and $R_1, R_2, \ldots, R_{h_4}$ of all blue and red leaf blocks of $T_{\mathcal{B}}$ contained in big branches respectively. Suppose that semi-random edges have already been added between $B_i$ and $S_{\text{red}}$ for all $i < j$. We then choose $v_t$ to be an arbitrary vertex in $B_j \setminus \{v_{B_j}\}$. Because $|B_j| \geq 2$, such a choice for $v_t$ always exists. If $u_t$ lands in $S_{\text{red}}$, we add edge $u_tv_t$. Otherwise, we consider the round a failure. The strategy for red leaf blocks is analogous.
    
    Because the probability that a vertex from $S_{\text{red}}$ is chosen u.a.r.\ is at least $3/8$, and the probability that a vertex from $S_{\text{blue}}$ is chosen u.a.r.\ is at least $3/16$, it also takes $o(n)$ rounds in expectation to add edges to all leaf blocks in big branches. 
    
    After all leaves in both small and big branches have received an edge, there exist two internally vertex-disjoint paths from each leaf block $B$ to $B^*$. Namely, as all of the edges we added have one red and one blue endpoint, each blue leaf has a path which only contains blue vertices and a vertex in $B^*$, and a path that starts with the added edge, and then only contains red vertices and one vertex in $B^*$. Analogously, there exist two such paths from each red leaf. As these two paths do not share their endpoint in leaf $B$, and as each leaf is $2$-connected by Corollary~\ref{col:reduced_block_tree_leaves}, set $B \setminus \{v_B\}$ does not contain any cut-vertices.
    
    We note that again the resulting graph is $2$-edge-connected. We then use the same strategy as in the case where there exists a block of size at least $n/4$ to eliminate all the cut-vertices that separate individual blocks from the rest of the graph. Recall that this strategy a.a.s.\ takes $o(n)$ rounds. Let $G_2''$ be the resulting graph. We then observe that no vertex in $[n] \setminus B^*$  is a cut-vertex in graph $G_2''$. Hence, we consider a cut-vertex $v \in B^*$. First suppose that the branch rooted at $v$ is empty. We observe that by Proposition~\ref{prop:reduced_block_tree_k2} it then holds that $|B^*| \geq 3$. But then, $B^*$ is $2$-connected, contradicting $v$ being a cut-vertex. Next suppose that the branch rooted at $v$ is small. We note that for each vertex in $S_{\mathcal{B}, v}$ there exists a path to a leaf of branch $T_{\mathcal{B}, v}$ contained in $S_{\mathcal{B}, v}$. As each such a leaf has an edge to another branch, and as $B^*$ is either $2$-connected or isomorphic to $K_2$, it follows that $G_2''-v$ is connected. Hence, $v$ is not a cut-vertex. 
    
    Finally, suppose that the branch rooted at $v$ is big. In this case $v$ may indeed be a cut-vertex. Namely, if $T_{\mathcal{B}, v}$ contains multiple components of different colours, each of the edges added to the leafs in the branch could have both endpoints within the branch. To deal with such cut-vertices, we use a two-step strategy. In the first step, we want to ensure that the subgraph induced by $S_{\mathcal{B}, v}$ becomes connected. We achieve this using the standard strategy of choosing $v_t$ in the smallest component in the subgraph induced by $S_{\mathcal{B}, v}$. If $u_t$ lands in a different component of this subgraph, we add edge $u_tv_t$, otherwise we consider the round a failure. We note that as each component of $T_{\mathcal{B}, v}$ contains at least one leaf of $T_{\mathcal{B}}$, by Corollary~\ref{cor:2-connectedness_number_of_components}, $T_{\mathcal{B}, v}$ contains at most $o(n)$ components. As $|S_{\mathcal{B}, v}| > n/8$, the probability of adding successfully adding an edge in this first step is at least $1/16$. Then, by standard concentration inequalities, this step a.a.s.\ takes $o(n)$ rounds as well. We note that in the resulting graph, cut-vertex $v$ then separates two components, given by vertex sets $S_{\mathcal{B}, v}$ and $[n]\setminus (S_{\mathcal{B}, v} \cup \{v\})$. In the second step of the strategy, we connect these two components by a single edge. By again choosing $v_t$ in the smaller of the two components, and considering the round a failure if $u_t$ does not land in the other component. As the probability of a failure round is thus at most $1/2$, by standard concentration inequalities, the number of rounds in this step is $O(1)$. We then note that there are at most $7$ vertices $v \in B^*$ such $T_{\mathcal{B}, v}$ is big. Hence, the total number of rounds to ensure that each of these cut-vertices is absorbed is a.a.s.\ $o(n)$.
    
    Because the resulting graph then thus no longer contains any cut-vertices, the graph is $2$-connected. Thus, in any case, we can ensure that graph $G_2$ becomes $2$-connected in a.a.s.\ $o(n)$ rounds, as desired.
\end{proof}

Combining the analysis of these individual phases then results in the following lemma.

\begin{lemma}
    \label{lem:2-connectedness_upper_bound}
    $\tau_{\mathcal{C}_2} \leq \ln 2 + \ln(\ln 2 + 1))$ in the pre-positional process.
\end{lemma}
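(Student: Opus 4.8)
The plan is to obtain Lemma~\ref{lem:2-connectedness_upper_bound} by concatenating the three phases whose analyses were carried out above, and then translating the resulting a.a.s.\ bound on the number of rounds into the definition of $\tau_{\mathcal{C}_2}$. Concretely, the composite strategy is: in the first phase run the $2$-min process (in its pre-positional simulation from Section~\ref{subsection:min-degree_process}), in the second phase apply the connecting strategy of Proposition~\ref{prop:2-connectedness_connect}, and in the third phase apply the strategy of Proposition~\ref{prop:2-connectedness}.

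First I would handle the first phase. Using the auxiliary strategy described in Section~\ref{subsection:min-degree_process} — pick $u_t$ u.a.r.\ among the vertices of currently smallest degree (ignoring whether $u_t$ is adjacent to, or equal to, a would-be partner), counting only loops and multi-edges as failures — the pre-positional process simulates the $2$-min process with a.a.s.\ $o(n)$ failure rounds. By the cited result of Wormald on the min-degree process, the $2$-min process reaches minimum degree $2$ after $(\ln 2 + \ln(\ln 2+1) + o(1))n$ steps a.a.s. Hence a.a.s.\ the first phase produces a graph $G_1$ with $\delta(G_1)\ge 2$ using $(\ln 2 + \ln(\ln 2+1) + o(1))n$ rounds in total.

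Next I would feed $G_1$ into the structural machinery. By Proposition~\ref{prop:2-connectedness_small_blocks} and Corollary~\ref{cor:2-connectedness_number_of_components}, $G_1$ a.a.s.\ has $o(n)$ maximal $2$-connected induced subgraphs; Proposition~\ref{prop:2-connectedness_connect} then makes $G_1$ connected in $o(n)$ further rounds, yielding $G_2$, and Proposition~\ref{prop:2-connectedness} makes $G_2$ $2$-connected in $o(n)$ further rounds. Since there are only three phases, intersecting the finitely many a.a.s.\ events shows that a.a.s.\ this composite strategy $\mathcal{S}$ builds a $2$-connected graph on $[n]$ within $(\ln 2 + \ln(\ln 2+1) + \eps)n$ rounds, for every fixed $\eps>0$ and all large $n$. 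Because $\mathcal{C}_2$ is increasing, this gives $\mathbb{P}\!\left[G_{\lceil(\ln 2 + \ln(\ln 2+1)+\eps)n\rceil}\in\mathcal{C}_2\right]\to 1$, so for every fixed $q<1$ we get $\tau_{\mathcal{C}_2}(\mathcal{S},q,n)\le (\ln 2+\ln(\ln 2+1)+\eps)n$ for $n$ large, whence $\limsup_{n\to\infty}\tau_{\mathcal{C}_2}(q,n)/n \le \ln 2+\ln(\ln 2+1)+\eps$; letting $q\toup 1$ and then $\eps\to 0$ yields $\tau_{\mathcal{C}_2}\le \ln 2 + \ln(\ln 2+1)$.

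The substantive work is entirely contained in the structural Propositions~\ref{prop:2-connectedness_small_blocks}, \ref{prop:2-connectedness_connect}, and~\ref{prop:2-connectedness}; the present lemma is essentially their corollary. The only point requiring a little care is the bookkeeping: one must check that the $o(n)$ failure rounds from the pre-positional simulation of the $2$-min process, together with the $o(n)$ rounds of phases two and three, genuinely combine to $o(n)$, and that the a.a.s.\ guarantees of the later phases — stated conditionally on the (typical) structure produced by the earlier ones — can be chained. Since the number of phases is a constant and each phase succeeds a.a.s.\ uniformly over the typical outcomes of the preceding phase, a union bound over the constantly many ``bad'' events suffices, so this is routine rather than an obstacle.
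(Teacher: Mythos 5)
Your proposal is correct and follows essentially the same route as the paper: Lemma~\ref{lem:2-connectedness_upper_bound} is obtained there exactly by concatenating the pre-positional simulation of the $2$-min process with the $o(n)$-round phases of Propositions~\ref{prop:2-connectedness_connect} and~\ref{prop:2-connectedness}, using Proposition~\ref{prop:2-connectedness_small_blocks} for the block-count input. Your extra bookkeeping about chaining the a.a.s.\ events and converting the round bound into the definition of $\tau_{\mathcal{C}_2}$ is a harmless elaboration of what the paper leaves implicit.
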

\begin{proof}
    The lemma directly follows from Propositions~\ref{prop:2-connectedness_small_blocks} and~\ref{prop:2-connectedness}, and the fact that the $2$-min process requires $(\ln 2 + \ln(\ln 2 + 1)) + o(1))n$ rounds.
\end{proof}

Lemma~\ref{thm:2-connectedness} then follows directly from Lemmas~\ref{lem:relation_pre_post_positional}, and~\ref{lem:2-connectedness_upper_bound}, and the known lower bound given by the $2$-min process.

This then completes the proof of Theorem~\ref{thm:k-connect}; it follows directly from Lemmas~\ref{lem:pre-positional_k-connectedness},~\ref{lem:pre-positional_1-connectedness}, and~\ref{thm:2-connectedness}.

\section{Degenerate subgraphs: proof of Theorem~\ref{thm:degeneracy}}
\label{section:degeneracy}




Part (a) follows by~\cite[Theorem~1.2]{Behague2021SubgraphHypergraphs} and Lemma~\ref{lem:relation_pre_post_positional}.

For part (b), we consider the pre-positional process. Our proof is similar to the proof of Theorem~\ref{thm:degenerate-old}, but requires slightly more careful analysis due to the difference in power between the pre- and post-positional processes.

    Let $g(n)$ be a function such that $g(n)\to \infty$ as $n\to \infty$. We prove that there exist a pre-positional strategy which construct an $H$-subgraph a.a.s.\ in at most $g(n) 2^{|V(H)|}\cdot n^{(d-1)/d}$ rounds. Note that this immediately implies part (b) as $|V(H)|$ is fixed, and we may take $g(n)=f(n) 2^{-|V(H)|}$. We proceed by induction on $|V(H)|$. We note that the statement holds directly if $|V(H)| = 1$. Suppose $H$ is a $d$-degenerate graph with $m\ge 2$ vertices, and assume that the statement holds for all fixed $d$-degenerate graphs $H'$ such that $|V(H')| <m $. 

    Let $v \in V(H)$ such that $\deg_H(v) \leq d$. Consider the graph $H' := H - v$. Then, by the inductive hypothesis, there exists a pre-positional strategy which a.a.s.\ constructs a graph $G'$ containing a copy of $H'$ in at most $T:=g(n) 2^{m-1}\cdot n^{(d-1)/d}$ rounds. Let $C'$ be the copy of $H'$ constructed in $G'$. For each vertex $u \in N_H(v)$, let $u' \in [n]$ be the corresponding vertex in $C'$, and let $N' := \{u' \,: \, u \in N_H(v)\}$. The strategy is then to grow a star from each vertex in $N'$. We do the following subsequently for each $u'\in N'$. Given $u' \in N'$, choose $u_t$ to be $u'$ for $g(n)2^m/(2d) \cdot n^{(d-1)/d}$ subsequent rounds. Let $S_{u'}$ be the set of vertices $w \in [n] \setminus V(C')$ such that $w = v_t$ for at least one of the $g(n)2^m/(2d) \cdot n^{(d-1)/d}$ rounds. Then, by standard concentration arguments, and as $|V(C')|$ is fixed, a.a.s.\ $|S_{u'}|$ is at least $g(n)2^m/(4d) \cdot n^{(d-1)/d}$. Let $G$ be the graph resulting from growing such stars for all $u' \in N'$.

    We then consider the probability that a vertex $w \in [n] \setminus V(C')$ is contained in all such sets, that is $\mathbb{P}\left( w \in \bigcap_{u' \in N'} S_{u'}\right)$. As the construction of $\{S_{u'}\}_{u'\in N'}$ is mutually independent,
    \begin{align*}
        \mathbb{P}\left( w \in \bigcap_{u' \in N'} S_{u'}\right) &\geq \Pi_{u' \in N'} \mathbb{P}\left( w \in S_{u'} \right)\\
        &= \Pi_{u' \in N'} \left(\frac{|S_u|}{n - |V(C')|}\right)\\
        &> \Pi_{u' \in N'} \left(\frac{|S_u|}{n}\right)\\
        &\geq \Pi_{u' \in N'} \left(\frac{g(n)2^m}{4d} \cdot \frac{n^{(d-1)/d}}{n}\right)\\
        &= \Pi_{u' \in N'} \left( \frac{g(n)2^m}{4d} \cdot \frac{1}{n^{1/d}} \right)\\
        &\geq \left( \frac{g(n)2^m}{4d} \cdot \frac{1}{n^{1/d}} \right)^d\\
        &= \left(\frac{g(n)2^m}{4d}\right)^d \cdot \frac{1}{n}.
    \end{align*}
    Let $X := \left| \bigcap_{u' \in N'} S_{u'}\right|$ be a random variable. Then, $\mathbb{E}[X] \geq (g(n)2^m/4d)^d \cdot (n - |V(C')|)/n$, and hence, by standard concentration arguments, as $\lim_{n\to\infty}g(n) = \infty$, a.a.s.\ $\bigcap_{u' \in N'} S_{u'}$ is non-empty. Let $z \in \bigcap_{u' \in N'} S_{u'}$. Consider the subgraph of $G$ given by extending $C'$ with vertex $z$ and the edges between $z$ and $N'$; this subgraph is isomorphic to $H$, as desired. Moreover, the number of rounds to construct $H$ is bounded by up to $g(n)2^{m-1}\cdot n^{(d-1)/d}$ rounds to construct $H'$, together with up to $g(n)2^m/(2d) \cdot n^{(d-1)/d}$ rounds to grow each of the stars. Thus, in total the construction of $H$ requires a.a.s.\ up to
    \begin{align*}
    g(n)2^{m-1}\cdot n^{(d-1)/d} + |N'| \cdot \frac{g(n)2^m}{2d} \cdot n^{(d-1)/d} &\leq \frac{g(n)2^m}{2}\cdot n^{(d-1)/d} + d \cdot \frac{g(n)2^m}{2d} \cdot n^{(d-1)/d}\\ 
    &= g(n)2^m \cdot n^{(d-1)/d}
    \end{align*}
    rounds as desired. \qed

\section{Dense bipartite subgraphs: proof of Theorem~\ref{thm:bipartite}}
\label{section:dense_bipartite}

THe lower bound is trivial, as constructing any such subgraph requires $m$ edges.
For the upper bound, by Lemma~\ref{lem:relation_pre_post_positional} it suffices to consider the pre-positional process. 

    Let $A := [\lceil \sqrt{m} \rceil]$  and $B=[n] \setminus A$. We construct a simple bipartite subgraph with bipartition $(A, B)$ with at least $m$ edges. For a vertex $v \in A$, let $\deg_{G_t}(v, B)$ denote the number of distinct neighbours of $v$ in $B$.

    Our strategy consists of $\lceil \sqrt{m} \rceil$ phases. In the $i^{\text{th}}$ phase, we consistently choose $v_t$ to be vertex $i$. The phase terminates once $\deg_{G_t}(i, B) = \lceil \sqrt{m} \rceil$. We consider a round a failure if $u_t \in A \cup N_{G_{t-1}}(i)$. We observe that the probability of such a failure round is at most $2\lceil\sqrt{m}\rceil / n$. Moreover, because $m = o(n^2)$, we observe that this probability is $o(1)$. 

    Moreover, once all phases have terminated, we note that the bipartition $(A,B)$ forms a bipartite subgraph with at least $\lceil \sqrt{m} \rceil \cdot \lceil \sqrt{m} \rceil \geq m$ non-parallel edges, as desired. Since each round has a probability of being a failure round of $o(1)$, and as there are $\lceil \sqrt{m} \rceil$ phases, each of which requires $\lceil \sqrt{m} \rceil$ successful rounds, the total needed number of rounds is a.a.s.\ $(1 + o(1)) \cdot \lceil \sqrt{m} \rceil \cdot \lceil \sqrt{m} \rceil = (1 + o(1)) m$, following a standard concentration argument. \qed

\section{Large induced cycles: proof of Theorem~\ref{thm:induced}}
\label{section:induced_cycles}

Recall that the semi-random graph processes allow the creation of multi-edges. This will be useful to construct an induced $(n-1)$-cycle in the post-positional process.


\begin{lemma}
    There exists a post-positional strategy that constructs an induced $(n-1)$-cycle a.a.s.\ in  $O(n\ln n)$ rounds. 
\end{lemma}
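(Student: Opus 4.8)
The plan is to exploit multi-edges so that exactly one vertex is deliberately excluded from the cycle and serves as a ``dumping ground'' for all undesirable edges. Fix a target vertex, say vertex $n$, to be the vertex not on the cycle, and let $W=[n-1]$ be the set of vertices we want to arrange into a cycle. Maintain throughout the process a collection of vertex-disjoint paths whose vertex set is exactly $W$ (initially $n-1$ trivial one-vertex paths), together with the invariant that the current graph restricted to $W$ is exactly the union of these paths (no chords). In each round we are shown a random square $u_t$; if $u_t = n$, we put the circle at $n$ as well (a loop) and call the round a failure; if $u_t$ is an \emph{internal} vertex of one of the current paths (degree $2$ in $W$), we again play the circle at $n$, creating a harmless multi-edge $u_t n$, and call the round a failure; otherwise $u_t$ is an endpoint of some path $P$, and we choose the circle $v_t$ to be the endpoint of a \emph{different} path $P'$, thereby merging $P$ and $P'$ into a single path. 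This keeps the ``$W$-induced subgraph is a disjoint union of paths'' invariant, and each non-failure round reduces the number of paths by one. After $n-2$ successful merges we have a single Hamilton path on $W$; one more successful round (joining its two endpoints) closes it into an induced $(n-1)$-cycle on $W$, with vertex $n$ untouched by any cycle edge. Since any chord among $W$-vertices is avoided by construction and vertex $n$ is never an endpoint of a cycle edge, the cycle on $W$ is genuinely induced in the final (simple) graph.

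The remaining task is to bound the number of rounds. We need $n-1$ successful rounds in total. When there are $j$ paths remaining, a round is successful precisely when $u_t$ lands on one of the $2j$ path-endpoints (with a minor caveat in the final step, where both endpoints lie on the same path, giving $2$ good choices out of $n$); so the success probability in that regime is $2j/n$ (or $\ge 1/n$ throughout). The number of rounds to pass from $j$ paths to $j-1$ paths is geometric with mean $n/(2j)$, so the expected total number of rounds is at most
\[
\sum_{j=2}^{n-1}\frac{n}{2j} + n \;=\; \frac{n}{2}H_{n-1}+O(n)\;=\;\frac12 n\ln n + O(n)\;=\;O(n\ln n).
\]
For the a.a.s.\ statement, I would split into, say, $\log_2 n$ phases as in the proof of Lemma~\ref{lem:pre-positional_1-connectedness}: phase $i$ is the block of rounds in which the number of paths drops from $n/2^{i-1}$ to $n/2^i$. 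In phase $i$ the success probability is $\ge (n/2^i)/n \cdot 1 = 2/2^i$ throughout, so the number of rounds in phase $i$ is stochastically dominated by a negative-binomial variable with $n/2^i$ successes and success parameter $\Theta(2^{-i})$, which has mean $O(n)$ and concentrates; summing the $\Theta(\log n)$ phases and the final $O(n)$ rounds, a standard union bound / Markov-type argument gives that a.a.s.\ the total is $O(n\ln n)$.

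The main obstacle — really the only subtle point — is verifying that the strategy can always make a \emph{legal} successful move whenever $u_t$ is a path-endpoint and more than one path remains: we must have another path $P'$ with an available endpoint distinct from $u_t$, and we must ensure the merge never accidentally creates a chord or reconnects a path to itself. This is immediate from the invariant (distinct paths are vertex-disjoint, and an endpoint of $P$ joined to an endpoint of $P'\neq P$ yields a longer path, still chordless), so there is no real difficulty; the ``caveat'' cases — $u_t=n$, $u_t$ internal, or the final closing step — are all handled by either dumping the edge at $n$ or by the $2$-out-of-$n$ bound, and none of them breaks the invariant. Everything else is the routine concentration bookkeeping sketched above.
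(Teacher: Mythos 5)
Your proof is correct, but it takes a genuinely different route from the paper. The paper fixes the target cycle in advance: it designates $n$ as a sink and, the first time the square hits a vertex $v\in[n-1]$, adds the edge $v(v+1)$ (wrapping $n-1$ to $1$), dumping every other edge at $n$; the induced-cycle property is then automatic, and the time bound is exactly the coupon collector problem, giving a.a.s.\ $O(n\log n)$ in one line by citation. You instead maintain a system of vertex-disjoint chordless paths covering $[n-1]$, merge two paths whenever the square lands on an endpoint, dump everything else at $n$, and close the final Hamilton path at the end; correctness of the invariant is argued exactly as you say, and the running time needs a (still elementary) geometric/negative-binomial phase analysis rather than a citation. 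Each approach has something to offer: the paper's non-adaptive strategy makes both the ``induced'' verification and the time analysis essentially trivial, while your adaptive strategy is at least as fast (it exploits hits on any available endpoint rather than a prescribed vertex, giving expected time about $\tfrac12 n\ln n+O(n)$ versus $\sim n\ln n$ for coupon collecting), at the cost of carrying an invariant and a slightly longer concentration argument. Two cosmetic points: singleton paths contribute one endpoint, not two, so the success probability with $j$ paths is only guaranteed to be at least $j/n$ rather than $2j/n$ (this changes only the constant, not the $O(n\ln n)$ bound); and for the a.a.s.\ statement, Markov alone on the total expectation gives only constant failure probability, so you do need the per-phase concentration (or a Chebyshev bound on the sum of phase lengths) that you sketch, which indeed goes through.
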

\begin{proof}
    Our strategy aims to construct an induced cycle on the vertices $\{1, 2, \ldots, n-1\}$. We designate vertex $n$ as the \emph{sink vertex}. That is, if we cannot add a useful edge given $u_t$, we choose $v_t$ to be $n$. Hence, by removing vertex $n$, we obtain a graph which only contains desired edges. 

    The first time that $u_t$ lands on a vertex $v \in [n-1]$, we choose $v_t$ to be $v + 1$ (unless $v = n-1$, then we choose $u_t$ to be $1$). Any subsequent time $u_t$ lands on $v$, we choose $v_t$ to be $n$. Note that once we have landed at least once on each vertex in $[n-1]$, we have constructed an induced spanning cycle on the set $[n-1]$, as desired.

    Hence, an induced $(n-1)$-cycle is constructed once each vertex in $[n-1]$ is hit at least once, and this takes a.a.s.\ $O(n\log n)$ steps by the coupon collector's problem (see~\cite[Theorem~5.13]{mitzenmacher2017probability}).
\end{proof}

We complete the proof of Theorem~\ref{thm:induced} by showing that a.a.s.\ no pre-positional strategy can construct an induced $(n-1)$-cycle. To obtain an induced $(n-1)$-cycle, we first need to construct an induced path on $n-1$ vertices. Suppose that one has constructed such an induced path $P$, which includes all vertices other than $w \in [n]$, after step $t_0$. By the definition of an induced path, $[n]-w$ induces exactly $n-2$ edges, which form an $(n-1)$-path.

\begin{claim}\label{claim1}
    A.a.s.\ $w$ has $\Theta(n)$ distinct neighbours in $[n]-w$.
\end{claim}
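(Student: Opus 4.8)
The plan is to show that the construction of a long induced path in the pre-positional process forces vertex $w$ to accumulate many distinct neighbours, essentially because the player cannot control where the random circle $v_t$ lands. The key observation is structural: at step $t_0$, the graph restricted to $[n]-w$ is exactly an induced $(n-1)$-path, so in particular every edge $u_tv_t$ added at any step $t\le t_0$ with \emph{both} endpoints in $[n]-w$ must be one of the $n-2$ path edges. Since each such edge is added in exactly one round, at most $n-2$ rounds up to time $t_0$ had both endpoints in $[n]-w$. I would first argue that $t_0 = \Omega(n)$: indeed, constructing an $(n-1)$-path requires at least $n-2$ edges, so $t_0\ge n-2$. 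Hence at least, say, $(n-2) - (n-2) $ — more carefully, of the $t_0\ge n-2$ rounds, all but at most $n-2$ of them must have had at least one endpoint equal to $w$ (either $u_t=w$ or $v_t=w$). Actually a cleaner route: I will lower-bound $t_0$ more generously and count rounds in which $w$ is hit.

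The main step is a probabilistic argument exploiting that $v_t$ is chosen u.a.r.\ and independently of $u_t$. Condition on the (adaptively chosen) sequence of pre-positional moves $v_1,\dots,v_{t_0}$; these are random, but the point is that in every round $t$, \emph{given} the history and the player's choice of $v_t$, the vertex $u_t$ is uniform on $[n]$ and independent of $v_t$. Consider the rounds $t\le t_0$ in which the player sets $v_t=w$ (call these "$w$-circle rounds"). In each such round, $u_t$ is uniform on $[n]$, so with probability $1-o(1)$ it lands in $[n]-w$, and these landing vertices are (conditionally) independent uniform samples. By a coupon-collector / concentration argument, if there are $\Omega(n\log n)$ such rounds then a.a.s.\ $w$ acquires $\Theta(n)$ distinct neighbours; but more robustly, even $cn$ such rounds suffice to give $\Theta(n)$ distinct neighbours a.a.s.\ (the number of distinct values among $cn$ i.i.d.\ uniform samples from a set of size $n$ concentrates around $(1-e^{-c})n$). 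Symmetrically, rounds in which $u_t=w$ (the player did not choose this — it happens with probability $1/n$ each round) also contribute neighbours, but we will not need them.

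So the crux is to show that a.a.s.\ there are $\Omega(n)$ rounds $t\le t_0$ with $v_t=w$. Suppose not: say fewer than $\eta n$ rounds have $v_t=w$, for small $\eta>0$. Then all but at most $\eta n + O(\log n)$ rounds up to $t_0$ (the $O(\log n)$ accounting for the rare rounds where $u_t=w$) have \emph{both} endpoints in $[n]-w$; by the structural observation above there are at most $n-2$ such rounds, giving $t_0 \le n-2 + \eta n + O(\log n)$. This is not yet a contradiction, so I need the genuinely hard input: I must show that constructing an induced $(n-1)$-path in the pre-positional process a.a.s.\ requires \emph{strictly more} than $(1+\eta)n$ rounds for some fixed $\eta>0$ — equivalently, that a constant fraction of rounds are "wasted" on $w$. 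I expect this to be the main obstacle, and I would prove it by the same uncontrollability principle applied to the path itself: to extend an induced path by one vertex, the player picks $v_t$ to be an endpoint of the current path, but $u_t$ is uniform, so it lands on a "useful" fresh vertex only with probability roughly (number of fresh vertices)$/n$; summing the expected waiting times as the path grows from length $\ell$ to $n-1$ gives $\sum_{\ell} \Theta(n/(n-\ell))$-type behaviour only if fresh vertices are the target, but crucially the player \emph{cannot force} $u_t$ to avoid already-used vertices, and every round where $u_t$ hits a used non-$w$ vertex either creates a chord (impossible, since the final graph is induced on $[n]-w$) or must be absorbed by setting $v_t = w$. Counting: the number of rounds where $u_t$ lands on an already-used vertex of $[n]-w$ is a.a.s.\ $\Omega(n)$ once $\Omega(n)$ vertices are used (each such round contributing probability $\Omega(1)$), and each is a $w$-circle round. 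This yields $\Omega(n)$ $w$-circle rounds a.a.s., completing the claim. I would present this last counting step carefully, as it is where the pre-positional restriction (that $v_t$ is chosen before seeing $u_t$) is essential and where the estimate must be made rigorous via a concentration inequality on the number of "repeat hits."
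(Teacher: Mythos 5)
There is a genuine gap, and it sits exactly where you flag it: your whole route hinges on showing that every pre-positional strategy a.a.s.\ makes $\Omega(n)$ rounds with $v_t=w$, and the argument you sketch for this does not work as stated. First, it reverses the timing of the pre-positional process: the player commits to $v_t$ \emph{before} seeing $u_t$, so a round in which $u_t$ lands on an already-used vertex of $[n]-w$ cannot be ``absorbed by setting $v_t=w$''; either $v_t$ was already $w$, or an edge inside $[n]-w$ is created whether the player likes it or not. Second, and more importantly, a square landing on an already-used vertex does \emph{not} force a wasted round or an edge to $w$: if that vertex still has degree at most $1$ inside $[n]-w$ and lies in a different path segment than $v_t$, the edge $u_tv_t$ is a perfectly legitimate future path edge (the path need not be grown from one end; it can be assembled from merging segments). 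So ``each such round is a $w$-circle round'' is false, and the count of repeat hits does not by itself yield $\Omega(n)$ rounds with $v_t=w$. The fact you need is true --- because the set of valid square-targets (vertices of degree at most $1$ inside $[n]-w$, outside $v_t$'s segment) shrinks to $O(1)$ as the path nears completion, any strategy in fact needs $\Omega(n\log n)$ rounds, forcing many $v_t=w$ rounds --- but making that rigorous is real work that your proposal does not contain, and your earlier bookkeeping (e.g.\ the ``$O(\log n)$ rounds with $u_t=w$'') would also need to be redone once $t_0$ is superlinear.

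For comparison, the paper avoids this entirely with a much shorter argument: $t_0\ge n-1$ trivially, and among the first $n-1$ rounds the squares are $n-1$ uniform balls in $n$ bins, so a.a.s.\ $\Theta(n)$ vertices receive at least three squares; since $[n]-w$ induces exactly the simple path (so each vertex has at most two incident edges, with no multi-edges or loops, inside $[n]-w$), every such vertex must be adjacent to $w$. No lower bound on the number of ``wasted'' rounds, and no analysis of the player's strategy, is needed. Your structural observation that at most $n-2$ rounds have both endpoints in $[n]-w$ is correct and in the same spirit, but the probabilistic half of your plan is the hard part and remains unproven.
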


\proof $G_{t_0}$ contains at least $n-1$ edges and thus $t_0\ge n-1$. The distribution of the $n-1$ squares in the first $n-1$ steps is the same as is that of uniformly throwing $n-1$ balls into $n$ bins. By the standard Poisson approximation argument, the number of vertices receiving at least three squares is a.a.s.\ $\Theta(n)$. These vertices must all be adjacent to $w$ since $[n]-w$ induces an $(n-1)$-path.\qed  

It follows immediately that a.a.s.\ the only possible induced $(n-1)$-cycle that can be constructed is on $[n]-w$.
Observe that the only way to construct an induced $(n-1)$-cycle on $[n]-w$ is that
    \begin{enumerate}
        \item[(a)] $\{u_t,v_t\}= \{u,v\}$ for some $t\ge t_0+1$, where $u$ and $v$ are the two ends of $P$;
        \item[(b)] for all $t_0<s<t$, $\{u_s,v_s\}\neq \{u,v\}$; and
        \item[(c)] for all $t_0<s<t$, if $v_s\neq w$ then $u_t$ must be $w$. 
    \end{enumerate}
Considering the first step $t$ after $t_0$ that $v_t\neq w$. If $v_t\notin \{u,v\}$ then by (c), $u_t$ must be $w$, which occurs with probability $1/n$. If $v_t\in \{u,t\}$ then by (a,b), $u_t$ must be either $\{u,v\}\setminus \{v_t\}$ or $w$. The probability of this is $2/n$. Hence, the probability that $P$ can be completed into an induced $(n-1)$-cycle is $O(1/n)$. Hence, there does not exist a strategy that a.a.s.\ constructs an induced $(n-1)$-cycle in the pre-positional process, as desired. \qed

\newpage

\bibliographystyle{acm}
\bibliography{main.bib}

\end{document}